\documentclass[11pt]{amsart}
\usepackage{verbatim}
\usepackage{amssymb}
\usepackage{colonequals}
\usepackage{mathtools}
\usepackage[bookmarks,colorlinks,breaklinks]{hyperref}  
\usepackage[top=2cm, bottom=4.5cm, left=3cm, right=3cm]{geometry}
\hypersetup{linkcolor=blue,citecolor=blue,filecolor=blue,urlcolor=blue} 
\usepackage{bm}
\usepackage{color}
\usepackage{amsfonts,amsmath,amsthm,amssymb,amscd, hyperref}

\numberwithin{equation}{section}

\newtheorem{theorem}{Theorem}[subsection]

\newtheorem{lemma}[theorem]{Lemma}
\newtheorem{proposition}[theorem]{Proposition}

\theoremstyle{definition}
\newtheorem{definition}[theorem]{Definition}
\newtheorem{definition-theorem}[theorem]{Definition-Theorem}
\newtheorem{example}[theorem]{Example}

\newtheorem{remark}[theorem]{Remark}

\theoremstyle{remark}

\newtheorem*{remark*}{Remark}

\usepackage{enumitem}
\setenumerate[1]{label=\textup{(\alph*)}}
\setenumerate[2]{label=\textup{(\roman*)}}

\newcommand{\Aberk}[1]{\ensuremath{\bA_{\mathrm{Berk},#1}^1}}
\newcommand{\g}{\bfc}

\newcommand{\E}{{\mathbb E}}
\renewcommand{\d}{\delta}
\renewcommand{\l}{\lambda}
\newcommand{\gal}{{\rm Gal}}

\newcommand{\Z}{{\mathbb Z}}
\newcommand{\bfc}{{\mathbf c}}
\newcommand{\N}{{\mathbb N}}
\newcommand{\M}{{\mathbb M}}
\newcommand{\Q}{{\mathbb Q}}
\newcommand{\C}{{\mathbb C}}
\newcommand{\A}{{\mathbb A}}
\newcommand{\bA}{{\mathbb A}}

\newcommand{\Qbar}{{\overline{\mathbb Q}}}

\newcommand{\Lbar}{{\overline{L}}}

\newcommand{\hhat}{\widehat{h}} 
\newcommand{\lra}{\longrightarrow}

\begin{document}
\raggedbottom
\title[Collision of orbits]{Collision of orbits for families of polynomials defined over number fields}

\author{Dragos Ghioca}
\address{Department of Mathematics, University of British Columbia, Vancouver, BC V6T 1Z2}
\email{dghioca@math.ubc.ca}

\author{Negin Shadgar}
\address{Department of Mathematics, University of British Columbia, Vancouver, BC V6T 1Z2}
\email{negin@math.ubc.ca}



\subjclass[2020]{Primary 37P15; Secondary 11G35}

\keywords{families of polynomials; unlikely intersections; collision of orbits}

\begin{abstract}
Let $d\ge 2$ be an integer and let $c_0(t),\dots, c_{d-2}(t)\in\Qbar[t]$. We consider the family of normalized polynomials $f_\l(z):=z^d+\sum_{i=0}^{d-2} c_i(\l)\cdot z^i$ parameterized by $\l\in\Qbar$; the generic element of our family of polynomials is $f_t(z):=z^d+\sum_{i=0}^{d-2}c_i(t)\cdot z^i\in \Qbar[t][z]$. Also, let $\alpha_1(t),\alpha_2(t),\beta(t)\in\Qbar[t]$, where $\alpha_i(t)$ is not preperiodic under the action of $f_t(z)$ for each $i=1,2$. Under some natural hypotheses, we obtain precise necessary and sufficient conditions for which there exist infinitely many $\l\in\Qbar$ with the property that for some $m,n\in\N$ (depending on $\l$), we have that $f_\l^m(\alpha_1(\l))=f_\l^n(\alpha_2(\l))=\beta(\l)$.
\end{abstract}

\maketitle

\section{Introduction}
\label{sec:intro}


\subsection{Notation}
For any given variety $X$ endowed with a self-map $\varphi$, the (forward) orbit of a point $x\in X$ under $\varphi$ is the set of all points $\varphi^n(x)\in X$ for $n\in\N$ (where $\varphi^n$ is the $n$-th compositional iterate of $\varphi$). We say that $x$ is \emph{preperiodic} under the action of $\varphi$ if its orbit is finite, i.e., there exist integers $0\le m<n$ such that $\varphi^m(x)=\varphi^n(x)$ (with the convention that $\varphi^0$ is the identity map). If $m=0$, i.e., $x=\varphi^n(x)$, then we say that $x$ is \emph{periodic} under the action of $\varphi$. On the other hand, if $x$ is preperiodic, but not periodic, then we say that $x$ is \emph{strictly preperiodic}.

Let $g(z)\in\Qbar[z]$ be a polynomial of degree $d\ge 2$. We say that $g$ is in \emph{normal form} (or equivalently, that it is \emph{normalized}) if it is monic and the coefficient of its $z^{d-1}$ term equals $0$. It is immediate to see that for any $g\in\Qbar[z]$, there exists a linear polynomial $\mu(z)\in\Qbar[z]$ such that $\mu^{-1}\circ g\circ \mu$ is in normal form; therefore, in our paper, we will focus on normalized polynomials.

We are interested in families of normalized polynomials parameterized as follows: given $c_0(t),\dots, c_{d-2}(t)\in\Qbar[t]$, then we have the family of polynomials $\left\{f_\l(z)\right\}_{\l\in\Qbar}$ given by 
\begin{equation}
\label{eq:-2}
f_\l(z)=z^d+\sum_{i=0}^{d-2}c_i(\l)\cdot z^i\text{, for each $\l\in\Qbar$.}
\end{equation}
The \emph{generic} element of our polynomial family is
\begin{equation}
\label{eq:-1}
f_t(z)=z^d+\sum_{i=0}^{d-2} c_i(t)\cdot z^i\in\Qbar[t][z].
\end{equation}
We identify the family of polynomials with its generic element $f_t(z)$, understanding that each element of our family of polynomials is obtained by specializing $t$ to some element $\l\in\Qbar$. When each polynomial  $c_i(t)$ (for $i=0,\dots, d-2$) is constant, then we say that $f_t(z)$ is a \emph{constant family of polynomials} (as it is represented simply by a polynomial in $\Qbar[z]$). For $\alpha(t)\in\Qbar[t]$, we say that $\alpha$ is \emph{persistent preperiodic} under the action of the  polynomial family $\{f_\l(z)\}_{\l\in\Qbar}$, if $\alpha(t)$ is  preperiodic under the action of $f_t(z)$. If $\alpha(t)$ is periodic under the action of $f_t(z)$,  then we say that $\alpha$ is \emph{persistent periodic} for our polynomial family. 

Finally, we recall the definition of the Chebyshev polynomials. For any integer $d\ge 2$, the \emph{$d$-th Chebyshev polynomial} $C_d(z)$ is the unique polynomial satisfying the identity:
\begin{equation}
\label{eq:Che}
C_d\left(x+\frac{1}{x}\right)=x^d+\frac{1}{x^d}.
\end{equation}


\subsection{Unlikely intersections in arithmetic dynamics}
\label{subsec:intro-unlikely}

Given the action of a self-map $\varphi$ on an ambient space $X$ (usually, $X$ is an algebraic variety and $\varphi$ is a regular morphism), one considers some \emph{unlikely event} $\mathcal{E}$  and asks which conditions allow the event $\mathcal{E}$ to occur infinitely often. The principle of unlikely intersections in arithmetic dynamics predicts that even though the event $\mathcal{E}$ is \emph{local} in nature, the fact that it is unlikely \emph{and} it repeats infinitely often  forces some \emph{strict global conditions} to be met by our dynamical system $(X,\varphi)$.  We list below a couple of examples involving a dynamical system $(X,\varphi)$ where $X$ is an algebraic variety defined over $\Qbar$ and $\varphi$ is a regular self-map, each of these examples  leading to deep questions in arithmetic dynamics:
\begin{itemize}
\item given an irreducible curve $C\subset X$, the event $\mathcal{E}$ occurs each time when a preperiodic point of $\varphi$ lands on $C$. Under some natural hypotheses, the expectation is that if this event occurs infinitely often (i.e., $C$ contains infinitely many preperiodic points), then $C$ must be preperiodic itself under the action of $\varphi$. This is the dynamical Manin-Mumford conjecture -- see \cite{Zhang, Sil-K3} for the origins of this question, and also, see \cite{GTZ, GT-BAMS} for more recent reformulations of the problem, which take into account certain special cases.
\item given an irreducible curve $C\subset X$ and also given a (non-preperiodic) point $x\in X$, the event $\mathcal{E}$ occurs each time when a point from the orbit of $x$ under $\varphi$ lies on $C$. Assuming this unlikely event occurs infinitely often, then the expectation is for the curve $C$ to be periodic under the action of $\varphi$ -- this is the dynamical Mordell-Lang conjecture (for more details, see \cite{BGT-book, G-DML}).   
\end{itemize}
For a comprehensive discussion of the unlikely intersection principle in arithmetic geometry, which is the starting point also for its dynamical counterpart, we refer the reader to the excellent book by Zannier \cite{Umberto}. 

The setting of algebraic dynamics is well-suited for versions of the unlikely intersections principle in \emph{families of maps}. For example, one could consider a family $\left\{f_\l(z)\right\}_{\l\in\Qbar}$ of polynomials as in \eqref{eq:-2} (with a generic element $f_t(z)\in\Qbar[t][z]$, as in \eqref{eq:-1}). Then given two \emph{starting points} generically being given by $\alpha_1(t),\alpha_2(t)\in\Qbar[t]$ (which are not persistent preperiodic), the unlikely event $\mathcal{E}$ occurs whenever for some $\l\in\Qbar$, we have that \emph{both} $\alpha_1(\l)$ and $\alpha_2(\l)$ are preperiodic points under the action of the polynomial $f_\l(z)\in\Qbar[z]$. If this event $\mathcal{E}$ occurs infinitely often, then the expectation is that $\alpha_1$ and $\alpha_2$ are \emph{(globally) dynamically related} with respect to our family of polynomials $f_t(z)$. More precisely, if the only  polynomials commuting with some iterate of $f_t$ are themselves iterates of $f_t$, then the expectation is that 
\begin{equation}
\label{eq:expectation}
\text{there exist positive integers $\ell_1$ and $\ell_2$ such that $f_t^{\ell_1}(\alpha_1(t))=f_t^{\ell_2}(\alpha_2(t))$.} 
\end{equation}
This problem was first formulated by Baker-DeMarco \cite{Matt}, but it has its origin in the work of Masser-Zannier \cite{M-Z-1, M-Z-2} regarding simultaneously torsion points in algebraic families of elliptic curves. The expected conclusion~\eqref{eq:expectation} (along with its extension for more general families of polynomials) was established in a series of papers  (see \cite{Matt-2, FG, GHT-ANT}). 
For more details regarding the unlikely intersection questions in arithmetic dynamics, we refer the reader to a couple of surveys \cite{survey, G-Survey}.

It is also natural to ask the following unlikely intersections question. Consider a family of polynomials given by $f_t(z)\in\Qbar[t][z]$ (see equations \eqref{eq:-1} and \eqref{eq:-2}), along with two \emph{starting} points $\alpha_1(t),\alpha_2(t)\in\Qbar[t]$ and one \emph{target} point $\beta(t)\in\Qbar[t]$. The unlikely event $\mathcal{E}$ occurs whenever for some $\l\in\Qbar$, we have that $\beta(\l)$ lies in \emph{both} the orbit of $\alpha_1(\l)$ and of $\alpha_2(\l)$ under the action of $f_\l(z)$. Once again, the expectation is that the infinite occurrence of the event $\mathcal{E}$ (which is coined as the \emph{collision of orbits} for the starting points $\alpha_1,\alpha_2$ at the target point $\beta$) should force a global dynamical relation between the three points $\alpha_1,\alpha_2,\beta$; this is what we obtain in our Theorem~\ref{thm:main}. 


\subsection{Our main result}

We prove the following statement.
\begin{theorem}
\label{thm:main}
Let $d\ge 2$, let $f_t(z)=z^d+\sum_{i=0}^{d-2} c_i(t)z^i\in \Qbar[t][z]$ be a family of polynomials, where $c_0(t),\dots, c_{d-2}(t)\in\Qbar[t]$. Also, let $\alpha_1(t),\alpha_2(t),\beta(t)\in\Qbar[t]$. Assume the following conditions hold:
\begin{enumerate}
\item[(I)] $c_0(t)$ is not identically equal to $0$;  
\item[(II)] not all polynomials $c_0(t),\dots, c_{d-2}(t)$, $\alpha_1(t),\alpha_2(t),\beta(t)$ are constant;
\item[(III)] there exists no $\gamma\in\Qbar^\ast$ such that $\gamma^{-1}f_t\left(\gamma z\right)=\pm C_d(z)$; and
\item[(IV)] for each $i=1,2$, $\alpha_i$ is not persistent preperiodic under the action of $f_t(z)$. 
\end{enumerate}
Then the set 
\begin{equation}
\label{eq:-0}
C_f(\alpha_1,\alpha_2;\beta):=\left\{\l\in\Qbar\colon \exists m,n\in\N\text{ such that }f_\l^m(\alpha_1(\l))=f_\l^n(\alpha_2(\l)) = \beta(\l)\right\}
\end{equation}
is infinite 
if and only if at least one of the following conditions hold:
\begin{itemize}
\item[(A)] there exists $\ell\in\N$ and there exists $i\in\{1,2\}$ such that $f_t^\ell\left(\alpha_i(t)\right)=\beta(t)$.

\item[(B)] there exist positive integers $\ell_1,\ell_2$  such that
$f_t^{\ell_1}\left(\alpha_1(t)\right) =f_t^{\ell_2}\left(\alpha_2(t)\right)$.

\item[(C)] there exists $\tilde{f}_t\in\Qbar[t][z]$ and there exist integers $k\ge 2$ and $\ell_1,\ell_2,\ell_3\ge 1$   such that 
$f_t=\tilde{f}_t^k$ and 
$\tilde{f}_t^{\ell_1}(\alpha_1(t))= \tilde{f}_t^{\ell_2}(\alpha_2(t))= \tilde{f}_t^{\ell_3}(\beta(t))$.

\item[(D)]  there exists $\tilde{f}_t\in\Qbar[t][z]$ and there exist integers $k\ge 2$ and $\ell_1,\ell_2\ge 1$   such that 
$f_t=\tilde{f}_t^k$ and 
$\tilde{f}_t^{\ell_1}(\alpha_1(t))=\tilde{f}_t^{\ell_2}(\alpha_2(t))$. 
Furthermore, $\beta$ is persistent periodic of minimal period $N_0$ under the action of $\tilde{f}_t$, where $N_0$ is a positive integer such that 
$\gcd (N_0, k) \mid  \ell_2 - \ell_1.$
\end{itemize}
\end{theorem}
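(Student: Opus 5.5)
I will prove the two directions separately; the ``if'' direction is mostly a construction, while the ``only if'' direction carries the arithmetic content.

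\emph{Sufficiency.} I would check each of (A)--(D) in turn.

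Under (A), say $f_t^\ell(\alpha_1(t))=\beta(t)$. Then it suffices to produce infinitely many $\l$ with $\beta(\l)$ in the forward orbit of $\alpha_2(\l)$. Given one such equality $f_\l^{n}(\alpha_2(\l))=\beta(\l)$, I would write it as the vanishing of
\[
P_n(t):=f_t^{n}(\alpha_2(t))-f_t^{\ell}(\alpha_1(t)).
\]
Hypotheses (I), (II) and (IV) should force $\deg_t P_n\to\infty$, so infinitely many roots appear as $n$ grows. One has to exclude the degenerate case where $P_n$ has only finitely many distinct roots for all $n$. For this I would use the standard degree growth $\deg_t f_t^n(\alpha_2(t))\asymp d^n$ together with a Benedetto-type argument that a nonconstant polynomial sequence with bounded root set cannot vanish along a non-preperiodic orbit.

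Under (B), $\beta$ must still be hit. Here the relation gives $f^{m+\ell_1}(\alpha_1)=f^{m+\ell_2}(\alpha_2)$ for all $m$, so it suffices to find infinitely many $\l$ with $f_\l^{m}(f_\l^{\ell_1}\alpha_1(\l))=\beta(\l)$ for some $m$. This is the same one-point problem as in (A).

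Under (C), $\beta$ lies, after $\tilde f$-iterates, in the common grand orbit. I would choose $\l$ with $\tilde f_\l^{j}(\tilde f^{\ell_3}_\l\beta(\l))=\beta(\l)$, i.e.\ $\beta(\l)$ periodic under $\tilde f_\l$ with suitable period residues mod $k$. The congruences needed to convert $\tilde f$-iterates into $f=\tilde f^k$-iterates can then be arranged by choosing the periods.

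Under (D), the gcd condition is exactly what makes $\ell_1+ka\equiv \ell_2+kb \pmod{N_0}$ solvable. So whenever $\tilde f_\l^{s}(\alpha_1(\l))=\beta(\l)$ for a suitable residue of $s$, both orbits hit $\beta(\l)$. Infinitely many such $\l$ again come from the one-point argument.

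\emph{Necessity.} This is the main obstacle. Assume $C_f(\alpha_1,\alpha_2;\beta)$ is infinite. For each such $\l$, $\beta(\l)$ lies in both orbits, so
\[
\hhat_{f_\l}(\beta(\l))=d^m\hhat_{f_\l}(\alpha_1(\l))=d^n\hhat_{f_\l}(\alpha_2(\l)).
\]
I would split according to whether $\beta$ is persistently preperiodic.

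\textbf{Case 1: $\beta$ is not persistently preperiodic.} Heights in $\l$ grow like $d^m\deg\alpha$. Comparing the heights $\hhat_{f_t}$ of the generic points, which are positive by (IV), I would show that $m$ and $n$ are bounded along an infinite subsequence. Otherwise $\l$ would have height tending to zero relative to $\beta$, contradicting equidistribution: infinitely many $\l$ of bounded height would force $\beta$ to be preperiodic. With $m,n$ bounded, some fixed pair $(m,n)$ occurs infinitely often. Then $f_t^m(\alpha_1)-\beta$ and $f_t^n(\alpha_2)-\beta$ share infinitely many roots. Being polynomials in $t$, both vanish identically, which gives (A).

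\textbf{Case 2: $\beta$ is persistently preperiodic.} Here I would invoke the simultaneous-preperiodicity results of Baker--DeMarco/Favre--Gauthier/Ghioca--Hsia--Tucker. Infinitely many $\l$ make $\alpha_1(\l)$ and $\alpha_2(\l)$ both preperiodic, so the bifurcation measures coincide and $\alpha_1,\alpha_2$ are dynamically related. Using (III) and Ritt-type classification of polynomials commuting with $f_t$, this relation means $\tilde f^{\ell_1}(\alpha_1)=\mu\,\tilde f^{\ell_2}(\alpha_2)$ for some symmetry $\mu$ and some compositional root $\tilde f$ with $f=\tilde f^k$. Hypothesis (I) should kill nontrivial symmetries $\mu(z)=\zeta z$ in normal form, or reduce them to the $\tilde f$ case.

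Then I would analyse where $\beta$ sits in the $\tilde f$-orbit. If $\beta$ is strictly preperiodic but not in the grand orbit, the collision forces $\beta(\l)$ into the orbit of $\alpha_1(\l)$ at a bounded time. This reduces to identities and gives (A) or (C). If $\beta$ is periodic, the residue bookkeeping mod $\gcd(N_0,k)$ yields (B) or (D), with (B) arising when the residues already match for $f$ itself.

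The hardest step is making the passage from ``same canonical height function'' to an explicit orbit relation with the $\tilde f$ and $k$ structure. That passage is where (I) and (III) are used.
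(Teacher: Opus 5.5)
\textbf{The necessity argument in your Case 1 is wrong, and this is the core of the theorem.}

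You claim that when $\beta$ is not persistently preperiodic, $m,n$ stay bounded along a subsequence, so only (A) can occur. That cannot be right, because (B) and (C) are both compatible with $\beta$ not being persistently preperiodic.

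A concrete counterexample: take $f_t(z)=z^2+t$, $\alpha_1=\alpha_2=0$, $\beta=t^3$. This satisfies (I)--(IV) and (B), but not (A), since $\deg_t f_t^\ell(0)=2^{\ell-1}$ is never $3$. So $C_f$ is infinite, yet each fixed $m$ contributes only finitely many $\l$, forcing $m\to\infty$.

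The faulty step is your "contradiction". Ingram's estimate $\hhat_\l(\alpha(\l))=\hhat_t(\alpha)h(\l)+O(1)$, combined with $\hhat_{\l_j}(\beta(\l_j))=d^{m_j}\hhat_{\l_j}(\alpha_1(\l_j))$, only gives two things: $h(\l_j)$ is bounded, and $\hhat_{\l_j}(\alpha_i(\l_j))\to 0$ for $i=1,2$. Nothing forces $\hhat_{\l_j}(\beta(\l_j))\to 0$, and bounded parameter height contradicts nothing.

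The paper instead uses the small heights of $\alpha_1,\alpha_2$ \emph{before} splitting on $\beta$:
\begin{enumerate}
\item Baker--Rumely equidistribution gives $M_{\alpha_1,v}=M_{\alpha_2,v}$ at every place, hence infinitely many parameters where both $\alpha_i(\l)$ are preperiodic.
\item Baker--DeMarco together with Proposition~\ref{prop:poly} then gives $f_t^r(\alpha_1)=\tilde f_t^{\ell}(f_t^s(\alpha_2))$ with $f_t=\tilde f_t^k$. If $k\mid\ell$, this is (B).
\item If $k\nmid\ell$, combining this with the collision gives $\tilde f^{k(s+m_j-r)+\ell}_{\l_j}(\alpha_2(\l_j))=\tilde f^{kn_j}_{\l_j}(\alpha_2(\l_j))=\beta(\l_j)$ with distinct exponents. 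So $\beta(\l_j)$ is periodic under $\tilde f_{\l_j}$; this is the idea you are missing.
\item From there, if $\beta$ is not persistently preperiodic, then $\alpha_2,\beta$ are simultaneously preperiodic infinitely often, and a second Baker--DeMarco application gives (C).
\item Otherwise $\beta$ is persistently periodic, and the divisibility $N_0\mid k(m_j-n_j+s-r)+\ell$ gives (D).
\end{enumerate}

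Two smaller points in the necessity direction. First, (C) is impossible when $\beta$ is persistently preperiodic, since it would make $\alpha_1$ preperiodic; so your Case 2 outcome ``(A) or (C)'' is off. Second, (IV) does not make $\alpha_i$ active: a constant family with constant $\alpha_1$ has $\hhat_t(\alpha_1)=0$. That case needs a separate height argument before equidistribution can be used.

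\textbf{In the sufficiency direction, your one-point lemma has a gap.} Everything there reduces to showing there are infinitely many $\l$ with $f_\l^m(\alpha(\l))=\beta(\l)$. Degree growth plus a ``Benedetto-type argument'' does not exclude the degenerate case you yourself flag. Benedetto's theorem only yields $\deg_t f_t^n(\alpha)\to\infty$; it says nothing about how many \emph{distinct} roots $f_t^n(\alpha)-\beta$ has. For instance, with $f=z^d$, $\alpha=t$, $\beta=0$, the degrees grow, yet $f^n(\alpha)-\beta=t^{d^n}$ has the single root $0$.

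The paper needs a genuinely Diophantine input here. If every $Q_m=f_t^{n_0+m}(\alpha)-\beta$ had its roots in a fixed finite set, the points $(Q_m,Q_{m+1})$ would lie both in a finitely generated subgroup of $\mathbb{G}_m^2(\Qbar(t))$ and on the curve $y+\beta=(x+\beta)^d+\sum_i c_i(x+\beta)^i$. Laurent's theorem then forces this curve to be $y=x^d$, i.e.\ $\beta=0$ and all $c_i=0$, which (I) excludes. You need this, or an equivalent $S$-unit/$abc$ argument, plus the easy separate case where the $c_i$ and $\alpha$ are all constant.

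With that lemma in place, your treatments of (A)--(D) match the paper's. For (C), apply the lemma to $f_t=\tilde f_t^k$ with starting point $\tilde f_t(\beta)$. Then the $\tilde f_\l$-period of $\beta(\l)$ divides $ks+1$, so it is coprime to $k$, which is the condition your phrase ``choosing the periods'' leaves implicit.
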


Theorem~\ref{thm:main} answers a question raised in \cite[Remark~2.8]{A-G}. In \cite[Theorem~1.1]{A-G}, the collision of orbits problem was studied over fields of positive characteristic for the specific family of polynomials $f_t(z)=z^d+t$; due to working over a field of characteristic $p$, the problem from \cite{A-G} is already challenging when dealing even with the simplest of all polynomial families. In particular, the case of arbitrary families of polynomials over fields of positive characteristic is expected to be very difficult  (see \cite[Conjecture~2.6]{A-G}). In Theorem~\ref{thm:main} we answer the counterpart of the most general question posed in \cite[Section~2]{A-G} in   the case of polynomials over number fields.

Next, we make a couple of remarks regarding the hypotheses from Theorem~\ref{thm:main}; a much more in-depth discussion of the aforementioned hypotheses will be made in Section~\ref{sec:examples}.
\begin{remark}
\label{rem:not_constant}
We first note that hypothesis~(IV) is necessary for any meaningful statement since otherwise, assuming (say) that $\alpha_1$ is persistent preperiodic for $f_t(z)$, we would  either have that  $f_t^m(\alpha_1(t))=\beta(t)$ for some $m\in\N$, or there are only finitely many $\l\in\Qbar$ with the property that $f_\l^m(\alpha_1(\l))=\beta(\l)$ for some $m\in\N$.
\end{remark}

\begin{remark}
\label{rem:conclusions}
Second, we note that each conclusion~(A)-(D) from Theorem~\ref{thm:main} shows that there is \emph{at least} one dynamical relation between the two starting points $\alpha_1,\alpha_2$ and the target $\beta$ with respect to the family of polynomials $f_t(z)$. In particular, conditions~(C)-(D) appear due to the existence of other polynomials, not from the given family $f_t(z)$, which commute with  $f_t(z)$. As proven in Proposition~\ref{prop:poly}, the only polynomials commuting with an iterate of $f_t(z)$ (under the hypotheses of Theorem~\ref{thm:main}) are all compositional powers of some other polynomial $\tilde{f}_t$. 
\end{remark}

We also note that it makes sense to exclude from Theorem~\ref{thm:main} the case of constant families of polynomials $f_t(z)$, which are conjugated either to monomials, or to Chebyshev polynomials -- hence the relevance of hypotheses (I) and (III) from Theorem~\ref{thm:main}. Indeed, there is a clear dichotomy between polynomials which are dynamically related to endomorphisms of $\mathbb{G}_m$ (as it is the case of monomials and of the Chebyshev polynomials), as opposed to \emph{disintegrated polynomials} (see Definition~\ref{def:disintegrated}); this was studied thoroughly in \cite{Alice}. In Section~\ref{sec:examples}, we provide additional examples showing the novel subtleties appearing in the  collision of orbits problem in the context of non-disintegrated polynomials.


\subsection{Further connections between our Theorem~\ref{thm:main} and other studied questions}
\label{subsec:GCD}

The collision of orbits problem was already studied in several different settings: elliptic curves \cite{GHT-PJM, D-N}, Drinfeld modules \cite{G-JNT} and families of polynomials over fields of positive  characteristic \cite{A-G, LN}. At its core, the collision of orbits problem is a question about the greatest common divisor for families of polynomials; next, we explain this connection for the setting from Theorem~\ref{thm:main}.

So, with the notation as in Theorem~\ref{thm:main}, each $\l\in C_f(\alpha_1,\alpha_2;\beta)$ (see \eqref{eq:-0}) corresponds to an equation of the form:
\begin{equation}
\label{eq:220}
f_\l^m(\alpha_1(\l))=f_\l^n(\alpha_2(\l))=\beta(\l).
\end{equation}
Letting (for each $i=1,2$ and for each $k\in\N$) $P_{k,\alpha_i}(t):=f_t^k(\alpha_i(t))$, then $P_{k,\alpha_i}(t)\in\Qbar[t]$ (see also Section~\ref{subsec:notation_2}). Furthermore, letting $Q_{k,\alpha_i,\beta}(t):=P_{k,\alpha_i}(t)-\beta(t)$, then equation \eqref{eq:220} can be restated as saying that $\l$ is a root of $\gcd\left(Q_{m,\alpha_1,\beta}(t), Q_{n,\alpha_2,\beta}(t)\right)$. Thus, Theorem~\ref{thm:main} studies the greatest common divisor for two sequences of polynomials defined recursively (see Section~\ref{subsec:notation_2}); more precisely, it predicts that in the absence of conditions~(A)-(D) from Theorem~\ref{thm:main}, there are only finitely many distinct roots for all the possible \emph{GCD}'s of two polynomials, one from each sequence. 

Similar questions regarding \emph{GCD}'s were considered before, starting with the study of how large is $\gcd\left(a^m-1,b^n-1\right)$ (as we vary $m,n\in\N$) for two given multiplicatively independent positive integers $a$ and $b$ (see \cite{A-R, BCZ}). Motivated by the problems studied in \cite{A-R, BCZ}, similar \emph{GCD-problems} were considered in various settings: elliptic curves (see \cite{Silverman-3, GHT-PJM, Lau-1, D-N}), $S$-units (see \cite{Luca}), semiabelian varieties (see \cite{Lau-4}), algebraic dynamics (see \cite{HT}), including the study of the \emph{GCD-problem} over fields of characteristic $p$ (see \cite{C-Z-2, GHT-NJM}). As shown in Examples~\ref{ex:2}~and~\ref{ex:3}, the \emph{GCD-problem} raised by our dynamical question from Theorem~\ref{thm:main} may lead to difficult classical questions in the spirit of Artin's conjecture regarding primitive roots modulo primes.


\subsection{Strategy of proof and outline for our paper}
\label{subsec:strategy}

We start by presenting several examples in Section~\ref{sec:examples}, which show the relevance of each hypothesis from Theorem~\ref{thm:main}. We continue by setting up the notation for our proofs in Section~\ref{sec:technical}. In particular, in Section~\ref{subsec:poly}, we state several useful facts regarding commuting polynomials. 

In Section~\ref{sec:converse}, we prove the converse implication in Theorem~\ref{thm:main} (see Proposition~\ref{prop:converse}). More precisely, assuming each time that one of the conditions~(A)-(D) from Theorem~\ref{thm:main} holds, then we prove the corresponding set $C_f(\alpha_1,\alpha_2;\beta)$ (see \eqref{eq:-0}) is infinite. The main ingredient in proving Proposition~\ref{prop:converse} is Proposition~\ref{prop:real_converse}, which shows that given an active point $\alpha(t)$ for a family of polynomials $f_t(z)$ (see Definition~\ref{def:active}) and given a point $\beta(t)$, then there exist infinitely many $\l\in\Qbar$ such that $\beta(\l)$ is in the forward orbit of $\alpha(\l)$ under the action of $f_\l(z)$. Our Proposition~\ref{prop:real_converse} is in the spirit of \cite[Theorem~1.4]{Laura}, which establishes the existence of infinitely many $\l\in\Qbar$ such that $\alpha(\l)$ is preperiodic for $f_\l(z)$;  however, our proof is significantly different. As opposed to the arguments from \cite{Laura}, which rely mainly on complex dynamics techniques, the main ingredient in our proof of Proposition~\ref{prop:real_converse} is Diophantine, based on the classical Mordell-Lang conjecture for tori (which was settled by Laurent in \cite{Laurent}).

Sections~\ref{sec:heights}, \ref{subsec:Berkovich} and \ref{sec:proof_direct} are devoted to proving the direct implication in Theorem~\ref{thm:main}, which is stated as Proposition~\ref{prop:direct}. More precisely, we assume the set $C_f(\alpha_1,\alpha_2;\beta)$ from Theorem~\ref{thm:main} is infinite and we prove in Proposition~\ref{prop:direct} that at least one of the conclusions~(A)-(D) must hold. In Section~\ref{sec:heights}, we introduce the canonical heights $\hhat_\l:=\hhat_{f_\l}$ associated to the family of polynomials  from Theorem~\ref{thm:main} (see \eqref{eq:global_canonical}). We derive various useful facts, including Proposition~\ref{prop:heights}, which shows that under some natural hypotheses in Theorem~\ref{thm:main}, the fact that the set $C_f(\alpha_1,\alpha_2;\beta)$ is infinite yields the existence of an infinite sequence of parameters $\l_k\in\Qbar$ with the property that
\begin{equation}
\label{eq:221}
\lim_{k\to\infty} \hhat_{\l_k}(\alpha_1(\l_k))= \lim_{k\to\infty} \hhat_{\l_k}(\alpha_2(\l_k))=0.
\end{equation}
Equation \eqref{eq:221} allows us to apply the equidistribution theorem of Baker-Rumely \cite[Theorem~7.52]{BR} and thus get that for \emph{each} $\l\in\Qbar$, we have that $\hhat_\l(\alpha_1(\l))=0$ if and only if $\hhat_\l(\alpha_2(\l))=0$; for more details, see Section~\ref{subsec:Berkovich}. In particular, this means (see also \cite[Theorem~1.4]{Laura}) that there exist infinitely many $\l\in\Qbar$ such that \emph{both} $\alpha_1(\l)$ and $\alpha_2(\l)$ are preperiodic points for $f_\l(z)$ (see Theorem~\ref{thm:iff}). Then \cite[Theorem~1.3]{Matt-2} yields that $\alpha_1(t)$ and $\alpha_2(t)$ are (globally) dynamically related with respect to $f_t(z)$; in particular, this allows us to derive conclusions~(A)-(B)  in Proposition~\ref{prop:direct}, assuming the only polynomials commuting with some iterate of $f_t(z)$ are themselves polynomial iterates of $f_t(z)$. Using the refinements from Section~\ref{subsec:poly} regarding commuting polynomials, we can derive the precise conditions~(C)-(D) in case $f_t$ admits more commuting polynomials (than its compositional iterates) and the dynamical relations between the points $\alpha_1,\alpha_2,\beta$ involve these additional polynomials.


\section{More examples and further comments regarding  Theorem~\ref{thm:main}}
\label{sec:examples}

We recall the definition of $C_f(\alpha_1,\alpha_2;\beta)$, which is the set of all $\l\in\Qbar$ with the property that there exist some $m,n\in\N$ such that
\begin{equation}
\label{eq:0}
f_\l^m(\alpha_1(\l))=f_\l^n(\alpha_2(\l))=\beta(\l).
\end{equation}
We show various examples in which the absence of hypotheses~(I)-(III) gives rise to complications in Theorem~\ref{thm:main}; we note that the relevance of hypothesis~(IV) was already explained in Remark~\ref{rem:not_constant}.


\subsection{The case of a constant family of polynomials}
We start by presenting an example in which both the family $f_t(z)$ and also the starting points $\alpha_1(t),\alpha_2(t)$, along with the target point $\beta(t)$ are all constant. In this case, condition~(A) no longer guarantees that the set $C_f(\alpha_1,\alpha_2;\beta)$ would be  infinite. 

\begin{example}
\label{ex:1}
Assume each polynomial $c_i(t)$ is constant, for $i=0,\dots, d-2$. Furthermore, assume $\alpha_1(t),\alpha_2(t),\beta(t)$ are also constant. Then condition~(A) (but also, conditions~(C)-(D)) do not guarantee the existence of infinitely many $\l\in\Qbar$ as in equation~\eqref{eq:0}. Indeed, the fact that $\beta$ is in the orbit of $\alpha_1$ (which is condition~(A)) does not have implications regarding $\alpha_2(\l)$ (which always equals $\alpha_2$ in Example~\ref{ex:1}) having an orbit including $\beta(\l)$ (which also is constant in our Example~\ref{ex:1}). So, one would need to assume that at least one of the polynomials $c_0(t),\dots,c_{d-2}(t),\alpha_1(t),\alpha_2(t),\beta(t)$ is nonconstant, i.e., the hypothesis~(II) from Theorem~\ref{thm:main} is needed.
\end{example}

Next, we show that if our family of polynomials $f_t(z)$ does not satisfy hypothesis~(I), then Theorem~\ref{thm:main} may not hold.


\subsection{Polynomial families commuting with nontrivial linear polynomials}

First, Example \ref{ex:5} shows that if one allows for polynomial families commuting with nontrivial linear polynomials (which might happen in the absence of hypothesis~(I) from Theorem~\ref{thm:main}), one would have to alter conditions~(C)-(D) to allow for more complex dynamical relations between the points $\alpha_1,\alpha_2,\beta$.

\begin{example}
\label{ex:5}
Let $f_t(z):=z^3+tz$, $\alpha_1(t)=1$, $\alpha_2(t)=-1$ and $\beta(t)=1$. Then conditions~(A)-(D) from Theorem~\ref{thm:main} do not hold (note that in condition~(A), we ask that $\beta(t)$ is in the \emph{forward} orbit of $\alpha_i(t)$ and thus, the case $\alpha_1=\beta$ does not satisfy condition~(A)). However, in this case, we still have infinitely many $\l\in\Qbar$ (and $m,n\in\N$) satisfying equation~\eqref{eq:0}. Indeed, for each $m\in\N$, we solve the equation
\begin{equation}
\label{eq:201}
f_\l^m(-1)=1.
\end{equation}
As proven in Proposition~\ref{prop:real_converse}, there exist infinitely many $\l\in\Qbar$ satisfying \eqref{eq:201} as we vary $m\in\N$. But then, for each such $\l\in\Qbar$ and $m\in\N$ as in equation \eqref{eq:201}, we also get that
\begin{equation}
\label{eq:202}
f_\l^{2m}(1)=f_\l^m\left(f_\l^m(1)\right)=f_\l^m(-1)=1,
\end{equation} 
because $f_t(z)$ is a family of odd polynomials. Equations \eqref{eq:201} and \eqref{eq:202} (along with Proposition~\ref{prop:real_converse}) show that there are infinitely many $\l\in\Qbar$ (and $m,n\in\N$) satisfying equation \eqref{eq:0}, even though none of the conditions~(A)-(D) from Theorem~\ref{thm:main} hold.
\end{example}

The issue presented in this Example~\ref{ex:5} comes from the fact that our family of polynomials $f_t(z)$ admits more polynomials commuting with (some iterate of it) due to the extra symmetry presented by the map $z\mapsto -z$. This suggests that if we were to drop hypothesis~(I) from Theorem~\ref{thm:main}, then conditions~(C)-(D) should be suitably modified to allow other polynomials commuting with (some iterate of) $f_t(z)$ beyond the compositional iterates of the polynomials $\tilde{f}_t(z)$ for which $f_t=\tilde{f}_t^k$ (for some $k\ge 2$). Nevertheless, any such modification may still lead to complications as Example~\ref{ex:4}  shows.

\begin{example}
\label{ex:4}
Let $\tilde{f}_t(z) = z^3 + t\cdot z$ and let $f_t = \tilde{f}_t^2$.  Then let $\alpha_1(t) = -2$, $\alpha_2(t) = \tilde{f}_t(2)=2t+8$ and $\beta(t) = 2$. 

Since $\tilde{f}_t(z)$ is an odd polynomial, there are extra global dynamical relations coming from the map $z\mapsto -z$; in particular,  a suitably modified  conclusion~(C) from Theorem~\ref{thm:main} holds in this case for the three points $\alpha_1,\alpha_2,\beta$. However, we claim that there are \emph{no} $\lambda\in\Qbar$ such that equation \eqref{eq:0} holds for some  positive integers $m$ and $n$ (depending on $\l$). Indeed, if for some $\l\in\Qbar$ and $m,n\in\N$, we have 
\begin{equation}
\label{eq:128}
f_\lambda^m(\alpha_1(\lambda)) = f_\lambda^n(\alpha_2(\lambda)) =
\beta(\lambda), 
\end{equation}
then due to our choice for $\alpha_1, \alpha_2, \beta$ (along with $f_t = \tilde{f}_t^2$), we get that $2$ must be periodic under the action of $\tilde{f}_\lambda$.  Furthermore, since \eqref{eq:128} yields that $\tilde{f}_\lambda^{2n+1}(2) = 2$, we obtain that 
\begin{equation}
\label{eq:130}
\text{the minimal period of $2$ under the action of $\tilde{f}_\lambda$ is odd.} 
\end{equation}
On the other hand, \eqref{eq:128} yields  that
$\tilde{f}_\lambda^{2m}(-2) = 2$ 
and so, using that $\tilde{f}_\lambda$ is an odd polynomial, we have
$\tilde{f}_\lambda^{2m}(2) = -2$; hence,
\begin{equation}
\label{eq:129}
\tilde{f}_\lambda^{4m}(2) = \tilde{f}_\lambda^{2m}(-2) = 2.
\end{equation}
Equation \eqref{eq:129} yields that the period of $2$ under the action of $\tilde{f}_\lambda^{2m}$ is $2$, but on
the other hand, it must be odd (see \eqref{eq:130}); thus, we have
a contradiction, which means that equation~\eqref{eq:128} never holds. Hence, the absence of hypothesis~(I) may lead to a different conclusion in  Theorem~\ref{thm:main}.  
\end{example}

Next, we show that a very small change in Example~\ref{ex:4} might lead to a completely opposite conclusion.
\begin{example}
\label{ex:6}
We work again with  $\tilde{f}_t(z) = z^3 + t\cdot z$, $f_t = \tilde{f}_t^2$, $\alpha_1(t)= -2$ and $\beta(t)=2$. However, this time, we let $\alpha_2(t) =f_t(2)= \tilde{f}^2_t(2)$. Once again, due to the fact that $\tilde{f}_t(z)$ is an odd polynomial (i.e., it commutes with the linear polynomial $L(z)=-z$), we get that $\alpha_1(t)$ and $\alpha_2(t)$ (and also, $\beta(t)$) are dynamically related. So, we have the same general scenario as in Example~\ref{ex:4}, but as opposed to that case (in which we had no parameters $\l\in\Qbar$ leading to a collision of orbits), this time we claim there are infinitely many parameters $\l\in\Qbar$ such that for some suitable $m,n\in\N$, we would have
\begin{equation}
\label{eq:210}
f_\l^m(\alpha_1(\l))=f_\l^n(\alpha_2(\l))=\beta(\l).
\end{equation} 
Indeed, Proposition~\ref{prop:real_converse} yields that there exist infinitely many $\l\in\Qbar$ such that for some $m\in\N$, we have
\begin{equation}
\label{eq:211}
f_\l^m(-2)=2;
\end{equation}
then equation \eqref{eq:211} (along with the fact that $f_\l(z)$ is an odd polynomial) yields that $f_\l^{m}(2)=-2$ and thus, $f_\l^{2m}(2)=2$. Therefore, equation \eqref{eq:210} holds for infinitely many $\l\in\Qbar$, as claimed.
\end{example}

Examples~\ref{ex:5}, \ref{ex:4} and \ref{ex:6} show that allowing for the family $f_t(z)$ to have linear symmetries (i.e., more precisely, that there exist nontrivial linear polynomials commuting with some iterate of $f_t(z)$) leads to scenarios in which it becomes unclear what is the precise relation which guarantees the set $C_f(\alpha_1,\alpha_2;\beta)$ is infinite. In particular, this shows the relevance of hypothesis~(I) from Theorem~\ref{thm:main}; note that Proposition~\ref{prop:poly} yields that hypothesis~(I) prevents the existence of nontrivial linear symmetries for our family of polynomials. 


\subsection{The relevance for hypothesis~(III) in Theorem~\ref{thm:main}}

Examples~\ref{ex:2}~and~\ref{ex:3} show the connections of our problem to some (potentially deep) Diophantine questions; in particular, they justify hypothesis~(III) in Theorem~\ref{thm:main} and also, provide further justification to hypothesis~(I).

\begin{example}
\label{ex:2}
Next assume that the polynomials $c_0(t),\dots, c_{d-2}(t)$ are constant, while the polynomials $\alpha_1(t),\alpha_2(t),\beta(t)$ are not necessarily constant. Then the special case $f_t(z)=z^d$, i.e., 
\begin{equation}
\label{eq:-11}
c_0(t)=\cdots =c_{d-2}(t)=0
\end{equation}
leads to potentially  deep Diophantine questions. In this case, $f_t(z)=z^d$ commutes with more polynomials than the normal expectation for a polynomial of degree $d$ (see also Proposition~\ref{prop:commutation}); more precisely, $f_t(z)$ commutes in this case with any constant monomial family.  So, in light of Remark~\ref{rem:conclusions}, conclusions~(C)-(D) should be altered in order to allow the various global relations between the three points $\alpha_1,\alpha_2,\beta$ with respect to monomial actions.  Thus, consider the special case $\alpha_1(t)=t^2$, $\alpha_2(t)=t^3$ and $\beta(t)=t^6$. 
In this case, the three points $\alpha_1,\alpha_2,\beta$ are  dynamically related according to a modified version of conclusion~(C); indeed, letting $g_1(z)=z^{3}$, $g_2(z)=z^{2}$ and $g_3(z)=z$, we have:
\begin{equation}
\label{eq:120}
g_1(t^2)=g_2(t^3)=g_3(t).   
\end{equation}
Equation \eqref{eq:120} suggests that we \emph{should} be able to find infinitely many $\l\in\Qbar$ satisfying equation~\eqref{eq:0} for some suitable $m,n\in\N$. Equation~\eqref{eq:0} leads to solving for some $m,n\in\N$ the equation
\begin{equation}
\label{eq:122}
\l^{2d^m}=\l^{3d^n}=\l^6\text{, for $\l\in\Qbar$.}
\end{equation}
So, we would have infinitely many solutions $\l$ to equation~\eqref{eq:122} if and only if there exist infinitely many positive integers $D$ with the property that for some suitable $m,n\in\N$, we have $D\mid \gcd\left(d^m-2,d^n-3\right)$. This last question is difficult; more generally, given (coprime) positive integers $d,a,b$, the question 
\begin{equation}
\label{eq:121}
\text{whether the set $\left\{\gcd\left(d^m-a,d^n-b\right)\colon m,n\in\N\right\}$ is infinite}
\end{equation}
is most likely beyond the current Diophantine techniques (perhaps, being closely related to the famous Artin's conjecture regarding primitive roots modulo primes). In particular, we should exclude in Theorem~\ref{thm:main} the possibility that $f_t(z)=z^d$; this is done by assuming $c_0(t)\ne 0$, which is hypothesis~(I) from Theorem~\ref{thm:main}.
\end{example}

Furthermore, we can construct a twist of Example~\ref{ex:2} using Chebyshev polynomials (see equation \eqref{eq:Che} for the defining property of the Chebyshev polynomials). Note that equation \eqref{eq:Che} yields the identity
\begin{equation}
\label{eq:124}
C_N\circ C_M = C_{NM}\text{ for any integers $N,M\ge 2$.}
\end{equation}

\begin{example}
\label{ex:3}
Assume now that $f_t(z)\in\Qbar[z]$ is the constant polynomial family equal to the $d$-th Chebyshev polynomial $C_d(z)$. Furthermore, given integers $a,b>1$ coprime with $d$, assume $\alpha_1(t)=C_a(t)$, $\alpha_2(t)=C_b(t)$ and also, $\beta(t)=t$. Then due to commutation~\eqref{eq:124}, we see that a suitable modification of conclusion~(C) from Theorem~\ref{thm:main} would hold for our example,  which suggests that we should be able to find infinitely many $\l\in\Qbar$ satisfying equation~\eqref{eq:0} for some  positive integers $m$ and $n$ (depending on $\l$). But then for any such solutions $\l\in\Qbar$ and $m,n\in\N$, we would have that
\begin{equation}
\label{eq:125}
C_{d^ma}(\l)=C_{d^nb}(\l)=\l.
\end{equation} 
Writing $\l=u+\frac{1}{u}$ for some $u\in\Qbar^\ast$, equations \eqref{eq:125} and \eqref{eq:Che} lead then
\begin{equation}
\label{eq:126}
u^{\pm d^ma}=u^{\pm d^nb}=u.
\end{equation}
Thus, equation \eqref{eq:126} has infinitely many solutions if and only if the set 
\begin{equation}
\label{eq:127}
\left\{\gcd\left(ad^m\pm 1,bd^n\pm 1\right)\colon m,n\in\N\right\}\text{ is infinite.}
\end{equation}
As discussed in Example~\ref{ex:2}, the problem raised in \eqref{eq:127} is expected to be quite challenging. Therefore,  hypothesis~(III) is necessary in Theorem~\ref{thm:main} (also, note that $C_d(z)$ is a normalized polynomial and so, by Lemma~\ref{lem:conjugated}, any normalized polynomial conjugated to $\pm C_d(z)$ is of the form $\pm \gamma^{-1}C_d(\gamma\cdot z)$). 
\end{example}


\section{Technical setup and some useful facts}
\label{sec:technical}

In Section~\ref{subsec:notation_2}, we introduce the setup employed throughout our proofs for both the converse and the direct implication in Theorem~\ref{thm:main}. In particular, we define the canonical height for elements $\alpha(t)\in\Qbar[t]$ with respect to the generic element of a  polynomial family $f_t(z)\in\Qbar[t][z]$ (see \eqref{eq:canonical_generic}). 

In Section~\ref{subsec:poly}, we derive a useful result (see Proposition~\ref{prop:poly}), which will be employed in our proof of the direct implication of Theorem~\ref{thm:main}. Proposition~\ref{prop:poly} is based on a classical fact (see Proposition~\ref{prop:commutation}) regarding the classification of the set of polynomials commuting with an iterate of a given polynomial. We also state a few easy facts regarding conjugated polynomials in Section~\ref{subsec:conjugated}.


\subsection{Canonical height with respect to the generic element of a polynomial family}
\label{subsec:notation_2}

Let $f_t(z)\in\Qbar[t][z]$ be the generic element of a polynomial family of degree $d\ge 2$. For each $\alpha\in\Qbar[t]$ and for each $n\in\N$, we define $P_{f,n,\alpha}(t):=f_t^n\left(\alpha(t)\right)$; when the polynomial family is implicitly understood, we drop the dependence on $f$ from the index of $P_{f,n,\alpha}(t)$ and simply write
\begin{equation}
\label{eq:Pn}
P_{n,\alpha}(t)=f_t^n\left(\alpha(t)\right).
\end{equation}

\begin{lemma}
\label{lem:iterates}
Let $d\ge 2$, let $c_0(t),\dots,c_d(t)\in\Qbar[t]$ (with $c_d(t)\ne 0$)  and let
\begin{equation}
\label{eq:gen_fam}
f_t(z):=\sum_{i=0}^d c_i(t)\cdot z^i\in\Qbar[t][z].
\end{equation}
Let $\alpha\in\Qbar[t]$ and let $P_{n,\alpha}(t)$ be defined as in equation~\eqref{eq:Pn}. If 
\begin{equation}
\label{eq:alpha}
\deg_t(\alpha(t))> \max_{i=0}^{d-1}\deg_t(c_i(t)), 
\end{equation}
then for each $n\in\N$, we have
\begin{equation}
\label{eq:gen_form}
\deg_t\left(P_{n,\alpha}(t)\right)= \deg_t(\alpha(t))\cdot d^n + \deg_t(c_d(t))\cdot \frac{d^n-1}{d-1}.
\end{equation}
\end{lemma}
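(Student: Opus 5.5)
We argue by induction on $n$, proving the stronger statement that $\deg_t(P_{n,\alpha})$ equals the formula in \eqref{eq:gen_form} \emph{and} that $\deg_t(P_{n,\alpha}) > \max_{0\le i\le d-1}\deg_t(c_i)$. Write $D:=\deg_t(\alpha)$, $e:=\deg_t(c_d)$ and $M:=\max_{i=0}^{d-1}\deg_t(c_i)$, so hypothesis \eqref{eq:alpha} reads $D>M$. For $n=0$ the formula gives $D$, which is correct; it is also correct for $n=1$, as the inductive step below shows.

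For the inductive step, suppose $P:=P_{n,\alpha}$ has degree $D_n:=D d^n+e\frac{d^n-1}{d-1}$ and $D_n>M$. Then
\begin{equation*}
P_{n+1,\alpha}(t)=f_t(P(t))=c_d(t)P(t)^d+\sum_{i=0}^{d-1}c_i(t)P(t)^i .
\end{equation*}
The leading summand has degree exactly $e+dD_n$, since $\Qbar[t]$ is a domain and $c_d\ne 0$. For $0\le i\le d-1$, each remaining summand has degree at most $M+iD_n\le M+(d-1)D_n$. Because $D_n>M$ and $e\ge 0$, we get
\begin{equation*}
M+(d-1)D_n<dD_n\le e+dD_n,
\end{equation*}
so the leading term strictly dominates and cannot cancel. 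Hence $\deg_t(P_{n+1,\alpha})=e+dD_n$.

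It remains to verify the arithmetic. We have
\begin{equation*}
e+dD_n=Dd^{n+1}+e\left(1+\frac{d^{n+1}-d}{d-1}\right)=Dd^{n+1}+e\,\frac{d^{n+1}-1}{d-1}.
\end{equation*}
Moreover $D_{n+1}\ge dD_n\ge D_n>M$, which propagates the auxiliary inequality and completes the induction.

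The only point needing care is carrying the inequality $\deg_t(P_{n,\alpha})>M$ along the induction, since the dominance argument depends on it. It holds automatically because the degrees are nondecreasing. No earlier result from the paper is needed.
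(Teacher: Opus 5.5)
Your proposal is correct and follows essentially the same route as the paper: the paper also derives the recurrence $D_{n+1}=d\cdot D_n+\deg_t(c_d(t))$ from the observation that $\deg_t(\gamma)>\max_{i<d}\deg_t(c_i(t))$ forces $\deg_t(f_t(\gamma))=d\cdot\deg_t(\gamma)+\deg_t(c_d(t))>\deg_t(\gamma)$. That strict increase is exactly the auxiliary inequality you carry through your induction, and the closed formula then follows from the same arithmetic you verify.
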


\begin{proof}
For the sake of simplifying our notation, for each $n\in\N$, we let $D_n:=\deg_t\left(P_{n,\alpha}(t)\right)$. Then the formula~\eqref{eq:gen_form} follows easily by induction on $n$, once we establish the recurrence relation
\begin{equation}
\label{eq:reccur}
D_{n+1}=d\cdot D_n + \deg_t(c_d(t))\text{ for each $n\in\N$.}
\end{equation}
The recurrence relation \eqref{eq:reccur} follows immediately once we note that whenever $\gamma\in\Qbar[t]$ satisfies the following  inequality  for its degree:
\begin{equation}
\label{eq:gamma}
\deg_t(\gamma(t))>\max_{i=0}^{d-1} \deg_t(c_i(t)),
\end{equation}
then we have
\begin{equation}
\label{eq:gamma_2}
\deg_t\left(f_t(\gamma(t))\right) = d\cdot \deg_t(\gamma(t)) + \deg_t(c_d(t))>\deg_t(\gamma(t)).
\end{equation}
Now, our hypothesis \eqref{eq:alpha} yields that $\alpha(t)$ and (due to using iteratively \eqref{eq:gamma_2}) also all its iterates $f_t^n(\alpha(t))$ satisfy the inequality~\eqref{eq:gamma} for their respective degrees $D_n$. Therefore, the sequence of degrees $D_n$ satisfies the recurrence formula \eqref{eq:reccur}, thus proving the desired formula~\eqref{eq:gen_form}. This concludes our proof of Lemma~\ref{lem:iterates}. 
\end{proof}

Given a polynomial family $f_t(z)\in\Qbar[t][z]$ of degree $d\ge 2$ and given $\alpha\in\Qbar[t]$, we define the (global) canonical height of $\alpha$ with respect to $f_t$ (and we denote it by  $\hhat_t(\alpha(t))$) as follows:
\begin{equation}
\label{eq:canonical_generic}
\hhat_t(\alpha(t)):=\lim_{n\to\infty} \frac{\deg_t\left(f_t^n(\alpha(t))\right)}{d^n}= \lim_{n\to\infty} \frac{\deg_t\left(P_{n,\alpha}(t)\right)}{d^n}
\end{equation}
(see also equation~\eqref{eq:Pn} regarding the notation for $P_{n,\alpha}(t)$). Since each coefficient of $f_t(z)$ is itself a polynomial in $t$ (and thus, integral at all places of $\Qbar(t)$ away from the place at infinity), our formula~\eqref{eq:canonical_generic} coincides with the classical definition of the canonical height for polynomials defined over function fields (see \cite[Proposition~1.2]{Call-Silverman} and also, \cite[Section~3.4]{Silverman}). 

Following \cite{Laura}, we define \emph{active points} for a polynomial family as follows.
\begin{definition}
\label{def:active}
Given $f_t(z)\in\Qbar[t][z]$ and $\alpha(t)\in\Qbar[t]$, we say that $\alpha$ is an active point for $f_t$ if $\hhat_t(\alpha(t))>0$.
\end{definition}
Clearly, if $\alpha$ is persistent preperiodic for $f_t$, then $\alpha$ is not active. 
The following result proven in \cite[Theorem~A]{Rob} (see also \cite[Corollary~1.8]{Matt-0} for a generalization of this result to rational functions) shows that the only non-active, non-preperiodic points arise in the \emph{isotrivial} setting.
\begin{proposition}
\label{prop:nonzero_height}
Let $f_t(z)\in\Qbar[t][z]$ and let $\alpha(t)\in\Qbar[t]$. If 
\begin{itemize}
\item $\alpha$ is not persistent preperiodic for $f_t$;  and 
\item $\hhat_t(\alpha(t))=0$,
\end{itemize}
then $f_t(z)$ is isotrivial, i.e.,  there exists a linear polynomial $\mu\in\overline{\Q(t)}[z]$ such that $\mu^{-1}\circ f_t\circ \mu\in\Qbar[z]$.
\end{proposition}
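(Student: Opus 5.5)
Since this is \cite[Theorem~A]{Rob}, the natural route is to follow its non-archimedean strategy and argue by contraposition. We assume $f_t$ is \emph{not} isotrivial and that $\hhat_t(\alpha(t))=0$, and we aim to show that $\alpha$ is persistent preperiodic.

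\emph{Reductions.} First conjugate $f_t$ by a linear polynomial over a finite extension $K$ of $\Qbar(t)$ so that it is in normal form. Neither preperiodicity of $\alpha$ nor the vanishing of its canonical height changes under such a conjugation. Then pass to the smooth projective curve $\mathcal{C}$ over $\Qbar$ with function field $K$. The global height $\hhat_t$ decomposes as a sum over the places $v$ of $K$ of local canonical heights $\hhat_{v}$, each of which is nonnegative for polynomials. Hence $\hhat_t(\alpha)=0$ forces $\hhat_{v}(\alpha)=0$ for every $v$. In other words, the forward orbit of $\alpha$ is $v$-adically bounded at every place of $K$.

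\emph{Confining the orbit.} Boundedness at every place means each iterate $f_t^n(\alpha)$ has poles of bounded order at the finitely many places where $f_t$ has bad reduction, and no poles elsewhere. By Riemann--Roch, the whole orbit therefore lies in a fixed finite-dimensional $\Qbar$-vector space $V=L(D)$ for some effective divisor $D$ on $\mathcal{C}$. Nonisotriviality now enters. For a polynomial in normal form, being non-isotrivial means there is a place $v$ at which $f_t$ does not have potentially good reduction; concretely, some coefficient $c_i$ has $|c_i|_v^{1/(d-i)}>1$. At such a place the $v$-adic filled Julia set is contained in a finite union of disks $D_1,\dots,D_r$ of radius strictly smaller than its diameter. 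On each disk $D_j$, the map $f_t$ is a map onto a larger disk that scales distances by a fixed factor $>1$; this is the standard Branner--Hubbard or Benedetto analysis of non-archimedean polynomial dynamics.

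\emph{Conclusion and main obstacle.} We combine the two facts as follows. For distinct $x,y\in V$, the difference $x-y$ is a nonzero element of $L(D)$, so its order of vanishing at $v$ is at most $\deg D$. This gives the uniform lower bound $|x-y|_v\ge\varepsilon$ for some $\varepsilon>0$. On the other hand, two orbit points that share the same itinerary through $D_1,\dots,D_r$ for $n$ steps satisfy $|x-y|_v\le \rho^{-n}\cdot\mathrm{diam}$ with $\rho>1$, by the expansion on the disks. So orbit points are determined by itineraries of bounded length. The main obstacle, which is precisely the heart of Benedetto's argument, is that the residue field $\Qbar$ is infinite. As a result, $\varepsilon$-separation in a bounded $v$-adic region does not by itself give finiteness, and one must rule out infinitely many orbit points with distinct short itineraries that differ only in residue classes. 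To handle this, I would try first to use the finite-dimensionality of $V$ over $\Qbar$ together with the fact that $f_t$ is algebraic. Concretely, the map $x\mapsto f_t(x)$ restricted to orbit points in $V$ is given in coordinates by polynomial maps over $\Qbar$. The contraction of inverse branches along fixed itineraries then forces each itinerary class to contain at most one orbit point, and since the itineraries are eventually periodic, the orbit is finite. This gives that $\alpha$ is persistent preperiodic, contradicting the assumption.
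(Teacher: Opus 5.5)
The paper gives no proof of this statement; it quotes it from \cite[Theorem~A]{Rob} (see also \cite[Corollary~1.8]{Matt-0}). Your reductions are fine: normal form over a finite extension $K$, nonnegativity of the local heights, the orbit lying in $V=L(D)$, and the separation $|x-y|_v\ge\varepsilon$ for distinct $x,y\in V$. The dynamical input at the bad place, however, is false as stated. Let $D_0$ be the smallest disk containing the filled Julia set. Your claim that $f_t$ ``scales distances by a fixed factor $>1$'' on each $D_j$ holds only for components mapped with degree one onto $D_0$. A component containing a critical point whose orbit stays bounded at $v$ maps with degree $\ge 2$, is not expanded, and may contain whole disks of the filled Julia set.

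For instance, $f_t(z)=z^3-3t^2z+2t^3+t$ is normalized, non-isotrivial, and has bad reduction only at $v=\infty$, where $D_0=\{|z|_\infty\le |t|_\infty\}$. Yet $f_t(t)=t$ and $f_t(t+u)-t=3tu^2+u^3$. So the component $\{|z-t|_\infty\le 1\}$ maps with degree $2$ onto $D_0$. Moreover, the disk $\{|z-t|_\infty\le |t|_\infty^{-1}\}$ is forward invariant, hence lies in the filled Julia set. In the coordinate $w=3t(z-t)$ the map on this disk is $w\mapsto w^2+w^3/(9t^2)$, which reduces to $\bar w\mapsto \bar w^2$ over the residue field $\Qbar$.

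This example does not contradict the statement, but it breaks your argument. The itinerary cylinders need not shrink below your separation scale $\varepsilon$, which depends on $D$, that is, on the field of definition and on the pole bounds at all other places. Inside such a good-reduction disk the residue dynamics has infinitely many wandering classes, so the local picture at $v$ gives no reason for the orbit to be finite.

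Your last paragraph is also internally inconsistent. If uniform expansion held, orbit points would be determined by their itineraries of one fixed length $n$, of which there are at most $r^n$, and finiteness would follow at once. The infinite residue field only becomes an obstacle because expansion fails, and nothing you write addresses that. In particular, $f_t$ does not map $V$ into $V$, so it is not ``a polynomial map over $\Qbar$ in coordinates'' on $V$. And eventual periodicity of itineraries is essentially what has to be proved.

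Closing the gap needs an input that is not local at a single place. For example, Baker's proof of \cite[Corollary~1.8]{Matt-0} plays the product formula for $\prod_{i\neq j}(x_i-x_j)$, over many distinct points of small height, against a positive lower bound for the $v$-adic energy at a place where the canonical measure is not a point mass, i.e.\ where $f_t$ has bad reduction. Otherwise, simply invoke the result, as the paper does.
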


In the next Section~\ref{subsec:conjugated}, we state a few easy facts regarding conjugated polynomials (we recall that two polynomials $g_1(z)$ and $g_2(z)$ are \emph{conjugated} if there exists a linear polynomial $\mu(z)$ such that $g_2\circ \mu=\mu\circ g_1$).


\subsection{Conjugated polynomials}
\label{subsec:conjugated}

In this Section~\ref{subsec:conjugated}, we work with polynomials in $\C[z]$. Since we can always embed $\Qbar(t)$ into $\C$, the generic element $f_t(z)\in\Qbar[t][z]$ of a polynomial family can always be seen as an element of $\C[z]$.

The following result is standard (see \cite[Lemma~10.2]{GHT-ANT}).
\begin{lemma}
\label{lem:iso}
Let $f\in\C[z]$ be a polynomial in normal form. Then $f$ is conjugated to a polynomial in $\Qbar[z]$ if and only if $f\in\Qbar[z]$.
\end{lemma}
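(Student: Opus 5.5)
The statement to prove is Lemma~\ref{lem:iso}: a normalized $f\in\C[z]$ is conjugate to some polynomial in $\Qbar[z]$ if and only if $f\in\Qbar[z]$.

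The ``if'' direction is trivial: take $\mu(z)=z$.

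For the ``only if'' direction, suppose $g=\mu\circ f\circ\mu^{-1}\in\Qbar[z]$ with $\mu(z)=az+b$ and $a\in\C^\ast$. The first step is to replace $g$ by its normal form. Since $g$ has $\Qbar$ coefficients, we can conjugate it by a linear map $\nu$ with $\Qbar$ coefficients to get a normalized polynomial $g_0\in\Qbar[z]$. We do this by the usual recipe: scale by a $(d-1)$-th root of the leading coefficient of $g$, then translate by $-1/d$ times the $z^{d-1}$ coefficient of the rescaled polynomial.

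After this, $f$ and $g_0$ are two normalized polynomials conjugate by some linear $\sigma(z)=az+b$ with $a,b\in\C$. The rest of the proof shows that $\sigma$ is forced to be $\zeta z$ for a root of unity $\zeta$, and hence $f\in\Qbar[z]$.

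First we compare leading coefficients in $\sigma^{-1}\circ g_0\circ\sigma=f$. The leading coefficient of the left side is $a^{d-1}$, so monicity of $f$ gives $a^{d-1}=1$.

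Next we compare the $z^{d-1}$ coefficients. Expanding $a^{-1}\bigl((az+b)^d+\dots-b\bigr)$, the only contribution in degree $d-1$ comes from $(az+b)^d$, because $g_0$ has no $z^{d-1}$ term. That contribution is $a^{-1}\cdot d a^{d-1}b$. Since $f$ also has vanishing $z^{d-1}$ coefficient, this must be zero, so $b=0$.

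Therefore $\sigma(z)=\zeta z$ with $\zeta^{d-1}=1$, and $f(z)=\zeta^{-1}g_0(\zeta z)$. The number $\zeta$ is a root of unity, so it lies in $\Qbar$, and $g_0\in\Qbar[z]$. Hence every coefficient of $f$ is in $\Qbar$.

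There is no real obstacle; the only care needed is the coefficient bookkeeping that yields $a^{d-1}=1$ and $b=0$.
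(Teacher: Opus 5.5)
Your proof is correct and complete. The paper does not prove this lemma; it calls the result standard and cites \cite[Lemma~10.2]{GHT-ANT}. Your write-up therefore supplies a self-contained argument along the standard lines. First you normalize $g$ over $\Qbar$, which uses only that $\Qbar$ contains $(d-1)$-th roots of its elements and allows division by $d$. Then you show that any linear conjugacy $\sigma(z)=az+b$ between two normalized polynomials has $a^{d-1}=1$, from the leading coefficients, and $b=0$, from the $z^{d-1}$ coefficients. The second step is exactly the content of the paper's Lemma~\ref{lem:conjugated}, so you could cite it instead of redoing the expansion. The paper's citation buys brevity, while your route buys self-containment and a slightly sharper conclusion. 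Within a conjugacy class, a normalized representative is unique up to the finitely many rotations $z\mapsto\zeta z$ with $\zeta^{d-1}=1$. The same argument also shows the following for any subfield $K\subseteq\C$: if a normalized $f$ is conjugate to some $g\in K[z]$, then $f$ has coefficients in $K$ adjoined a $(d-1)$-th root of the leading coefficient of $g$ and the $(d-1)$-th roots of unity.
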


The following result will be employed in the proof of Proposition~\ref{prop:commutation}.

\begin{lemma}
\label{lem:conjugated}
Let $g_1,g_2\in\C[z]$ be polynomials of degree $d\ge 2$ and let $\mu(z)=Az+B\in\C[z]$ such that $g_1\circ \mu = \mu\circ g_2$. If the coefficients of $z^{d-1}$ in both $g_1(z)$ and $g_2(z)$ equal $0$, then $B=0$.  
\end{lemma}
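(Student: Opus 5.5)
We prove Lemma~\ref{lem:conjugated} by expanding both sides of $g_1\circ\mu=\mu\circ g_2$ and comparing the coefficients of $z^d$ and $z^{d-1}$. Write
\[
g_1(z)=a_dz^d+\sum_{i=0}^{d-2}a_iz^i
\quad\text{and}\quad
g_2(z)=b_dz^d+\sum_{i=0}^{d-2}b_iz^i,
\]
with $a_d,b_d\ne 0$. Since $\mu$ has to be an honest linear polynomial for the conjugation to make sense, we also have $A\ne 0$.

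On the right-hand side, $\mu(g_2(z))=A\,g_2(z)+B$. Its $z^d$-coefficient is $Ab_d$, and its $z^{d-1}$-coefficient is $A\cdot 0=0$, because $g_2$ has no $z^{d-1}$ term and adding the constant $B$ only changes the constant coefficient (recall $d\ge 2$).

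On the left-hand side, $g_1(Az+B)=a_d(Az+B)^d+\sum_{i\le d-2}a_i(Az+B)^i$. The terms with $i\le d-2$ have degree at most $d-2$, so they do not contribute to $z^d$ or $z^{d-1}$. Hence the $z^d$-coefficient is $a_dA^d$, and by the binomial theorem the $z^{d-1}$-coefficient is $a_d\,d\,A^{d-1}B$.

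Equating the $z^{d-1}$-coefficients gives $d\,a_dA^{d-1}B=0$. Since $d\ne 0$ in $\C$, $a_d\ne 0$ and $A\ne 0$, it follows that $B=0$.

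The only potential subtlety is justifying $A\ne 0$. If $A=0$, then $\mu$ is constant and $\mu\circ g_2$ is constant, whereas $g_1\circ\mu$ is also constant. So the hypothesis of a genuine linear conjugacy is needed, and we use the standing convention that linear polynomials have nonzero leading coefficient.
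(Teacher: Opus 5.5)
Your proof is correct and follows the paper's argument: the $z^{d-1}$-coefficient is $0$ in $\mu\circ g_2$ and $d\,a_dA^{d-1}B$ in $g_1\circ\mu$, which forces $B=0$. You also make explicit that $A\neq 0$ is needed; otherwise $g_1(B)=B$ with $B\neq 0$ would be a counterexample. The paper leaves this implicit in calling $\mu$ a linear polynomial.
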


\begin{proof}
The proof is immediate since the coefficient of $z^{d-1}$ in $\mu\circ g_2$ equals $0$, while the coefficient of $z^{d-1}$ in $g_1\circ \mu$ equals $0$ precisely when $B=0$.
\end{proof}

Following \cite{Alice}, we define the \emph{disintegrated polynomials} as follows.
\begin{definition}
\label{def:disintegrated}
A polynomial $g(z)\in\C[z]$ of degree $d\ge 2$ is called disintegrated, if it is not conjugated either to $z^d$, or to $\pm C_d(z)$. 
\end{definition}

\begin{remark}
\label{rem:disintegrated}
Let $g(z)\in\C[z]$ be a polynomial of degree $d\ge 2$ with the property that the coefficient of $z^{d-1}$ in $g(z)$ equals $0$.  
Lemma~\ref{lem:conjugated} yields that $g(z)$ is disintegrated if and only if there exists no $b\in\C^\ast$ such that $b^{-1}g(bz)$ equals either $z^d$, or $\pm C_d(z)$. 
\end{remark}


\subsection{Polynomials commuting with an iterate of a given polynomial}
\label{subsec:poly}
The following result will be used in establishing conclusions~(C)-(D) in the direct implication for Theorem~\ref{thm:main}.

\begin{proposition}
\label{prop:poly}
Let $f_t(z)\in\Qbar[t][z]$ be a polynomial family of degree $d\ge 2$  given by 
\begin{equation}
\label{eq:00}
f_t(z)=z^d+\sum_{i=0}^{d-2} c_i(t)\cdot z^i,
\end{equation} 
satisfying the following two hypotheses:
\begin{itemize}
\item[(i)] $c_0(t)\ne 0$; and
\item[(ii)] there exists no $b\in\Qbar^\ast$ such that $b^{-1}f_t(b\cdot z)=\pm C_d(z)$.
\end{itemize} 
Then there exists a polynomial $\tilde{f}_t(z)\in\Qbar[t][z]$ such that the set of polynomials commuting with some iterate of $f_t$ is precisely the set $\left\{\tilde{f}_t^k\colon k\in\N\cup\{0\}\right\}$.
\end{proposition}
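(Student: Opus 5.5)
We will view $f_t$ as a polynomial in $\C[z]$, via an embedding $\overline{\Q(t)}\hookrightarrow\C$. The plan is to invoke the classical classification of polynomials commuting with an iterate of a given polynomial (Proposition~\ref{prop:commutation}, due to Ritt and Julia). It says that if $g\in\C[z]$ is disintegrated, then the set of polynomials commuting with some iterate of $g$ has the form $\{L\circ h^k\}$. Here $h$ is a polynomial with $g=L_0\circ h^{k_0}$, and $L$ ranges over the finite cyclic group $\Sigma$ of linear maps $\mu$ with $\mu\circ g^n=g^n\circ\mu$, i.e.\ the symmetries of the Julia set.

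\textbf{Step 1: $f_t$ is disintegrated.} By Remark~\ref{rem:disintegrated}, we must rule out $b\in\C^\ast$ with $b^{-1}f_t(bz)\in\{z^d,\pm C_d(z)\}$.
\begin{itemize}
\item The case $z^d$ is impossible because the constant term of $b^{-1}f_t(bz)$ is $b^{-1}c_0(t)\ne0$ by (i).
\item For $\pm C_d$: comparing leading coefficients gives $b^{d-1}=\pm1$, so $b\in\Qbar^\ast$. Then $b^{-1}f_t(bz)=\pm C_d(z)$ forces every $c_i(t)$ to be constant, and this contradicts (ii).
\end{itemize}

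\textbf{Step 2: the symmetry group is trivial.} Suppose $\mu(z)=Az+B$ commutes with $f_t^n$. Both $f_t^n$ and $\mu^{-1}f_t^n\mu$ are normalized, and indeed equal, so Lemma~\ref{lem:conjugated} gives $B=0$. Comparing leading coefficients gives $A^{d^n-1}=1$.

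We also need $\mu\circ f_t=f_t\circ\mu$ itself, not just for the iterate. This should follow from the structure theory: the symmetry group of the Julia set of $f_t$ coincides with that of its iterates, and since $f_t$ is normalized it acts as $\mu f_t\mu^{-1}=\zeta f_t$ with $\zeta$ a root of unity. Granting this, compare constant terms in $A^{-1}f_t(Az)=f_t(z)$ (or in $f_t(Az)=A^{d}\cdots$). We get $A\,c_0(t)=c_0(t)$, and since $c_0\neq0$ by (i), $A=1$.

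If instead we only have $\mu f_t=\zeta f_t\mu$, the constant term gives $Ac_0=\zeta c_0$. Then $\mu f_t\mu^{-1}=\zeta f_t$ commutes with $f_t^n$ by hypothesis, and iterating shows that the relation forces $\zeta=A$. Hence $\zeta f_t=f_t$, i.e.\ $\zeta=1$ and $A=1$. This reconciliation of the Julia-set symmetry with the $c_0\neq0$ condition is the step I expect to need the most care. I would handle it by writing $f_t(Az)=A^{\,?}f_t(z)$ coefficientwise: each nonzero coefficient $c_i$ forces $A^{i}=A^{d}$, i.e.\ $A^{d-i}=1$, and $c_0\ne0$ gives $A^d=1$ together with the constant-term condition $A=1$.

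\textbf{Step 3: conclude and descend to $\Qbar[t]$.} With $\Sigma$ trivial, the classification gives that the commuting set is $\{h^k:k\ge0\}$, with $f_t=h^{k_0}$ and $h$ of minimal degree. After conjugating by a linear map we may take $h$ monic centered. Since $h$ commutes with $f_t$, Lemma~\ref{lem:conjugated}-type coefficient comparisons show that $h$ is normalized (its leading coefficient $a$ satisfies $a^{d-1}=1$, and we absorb it, using Step 2, to get $a=1$).

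It remains to see that $h$ has coefficients in $\Qbar[t]$. Its coefficients lie in $\overline{\Q(t)}$: solving $h^{k_0}=f_t$ recursively on the coefficients of a monic centered $h$ determines them from those of $f_t$ (top-down), so they lie in $\Qbar(t)$. Integrality over $\Qbar[t]$ follows because the same triangular system expresses each coefficient polynomially in the $c_i(t)$ and lower ones. Set $\tilde f_t:=h$.
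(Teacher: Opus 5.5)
Your Step~2, which is the real content of the proposition, is not proved. The implication ``$\mu$ commutes with $f_t^n$ $\Rightarrow$ $\mu$ commutes with $f_t$'' is only asserted, and the Julia-set route you fall back on cannot supply it. A symmetry $\sigma(z)=Az$ of the Julia set of a normalized $f_t$ satisfies $f_t\circ\sigma=\sigma^{d}\circ f_t$, i.e.\ $f_t(Az)=A^{d}f_t(z)$. Comparing coefficients with $c_0\neq 0$ then gives only $A^{d}=1$. Your extra ``constant-term condition $A=1$'' comes from $A^{-1}f_t(Az)=f_t(z)$, which is exactly the commutation you are trying to prove, so the argument is circular. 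In the $\mu f_t\mu^{-1}=\zeta f_t$ variant, the constant term does give $\zeta=A$, but ``hence $\zeta f_t=f_t$'' does not follow. In fact the Julia-set symmetry group is genuinely nontrivial under (i)--(ii). For $f_t(z)=z^2+t$, the map $z\mapsto -z$ preserves the Julia set, yet $f_t^n(-z)=f_t^n(z)\neq -f_t^n(z)$, so it commutes with no iterate. So identifying $\Sigma$ with the Julia-set symmetries is wrong, and no argument that uses only $f_t(Az)=A^df_t(z)$ and $c_0\neq0$ can reach $A=1$.

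The missing step is to use the commutation with $f_t^n$ a second time. Once $A^d=1$, you have $f_t\circ\mu=f_t$, so $\mu\circ f_t^n=f_t^n\circ\mu=f_t^{n-1}\circ(f_t\circ\mu)=f_t^n$. Surjectivity of $f_t^n$ then gives $\mu=\mathrm{id}$. For this you would need to cite the result that a symmetry $\sigma$ of the Julia set of a normalized polynomial satisfies $f\circ\sigma=\sigma^d\circ f$. The paper avoids Julia sets entirely. It takes the relation $L^D\circ\tilde f_t=\tilde f_t\circ L$ with $\gcd(D,r)=1$ from Proposition~\ref{prop:commutation} and notes $\tilde f_t(0)\neq 0$, since otherwise $f_t=L^i\circ\tilde f_t^j$ would vanish at $0$. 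Comparing constant terms gives $a^D=1$, hence $a=1$.

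Separately, your Step~3 claim that the generator can be taken monic is false. With $\omega$ a primitive cube root of unity, $\tilde f_t(z)=\omega z^2+t$ gives $f_t=\tilde f_t^{2}=z^4+2\omega^2tz^2+\omega t^2+t$. This satisfies (i)--(ii) but has no monic compositional square root. You cannot conjugate $h$ without changing $f_t$, and Step~2 is exactly what forbids replacing $h$ by $a^{-1}h$. This is harmless for the statement: $h$ is still centered with a root-of-unity leading coefficient, so your triangular system has nonzero pivots in $\Qbar$. It then shows directly that the coefficients of $h$ lie in $\Qbar[t]$, which is a cleaner descent than the paper's Galois-plus-poles argument.
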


The key to proving Proposition~\ref{prop:poly} lies in the following result appearing in \cite[Proposition~2.8]{ANT}.

\begin{proposition}
\label{prop:commutation}
Let $g\in\C[z]$ be a polynomial of degree $d\ge 2$. We let 
$$\mathcal{C}_g:=\left\{h\in\C[z]\colon \deg(h)\ge 1\text{ and there exists $\ell\in\N$ such that } h\circ g^\ell=g^\ell\circ h\right\}.$$
Then there exists $\tilde{g}, L\in\C[z]$ with $L$ being a linear polynomial satisfying the following properties:
\begin{itemize}
\item $L$ has finite order, i.e., there exists $r\in\N$ such that $L^r(z)=z$.
\item There exists $D\in\N$ coprime with $r$ such that $L^D\circ \tilde{g}=\tilde{g}\circ L$.
\item We have that
\begin{equation}
\label{eq:C_g}
\mathcal{C}_g=\left\{L^i\circ \tilde{g}^j\colon 0\le i\le r-1\text{ and }j\ge 0\right\}.
\end{equation} 
\end{itemize}
\end{proposition}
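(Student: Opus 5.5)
The statement as worded cannot hold for every polynomial of degree $d\ge 2$. For $g(z)=z^d$ both $z^2$ and $z^3$ commute with $g$, and they cannot both be of the form $L^i\circ\tilde g^j$ with $\deg\tilde g>1$, since $2$ and $3$ are not powers of one integer. So I will prove it for \emph{disintegrated} $g$ (Definition~\ref{def:disintegrated}), which is presumably the hypothesis in \cite{ANT}. Since the paper only applies Proposition~\ref{prop:commutation} to normalized non-monomial, non-Chebyshev families, this costs nothing there. After conjugating by a translation (Lemma~\ref{lem:conjugated}), I assume $g$ is in normal form.

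\emph{Step 1 (common Julia set and symmetry group).} Let $h\in\mathcal{C}_g$ with $h\circ g^\ell=g^\ell\circ h$, and let $G$ be the Green function of $g$. I will show $J(h)=J(g)$ as follows. Since $G$ is also the Green function of $g^\ell$, the function $G\circ h/\deg h$ is harmonic off $K(g)$, grows like $\log|z|$ at $\infty$, and is invariant under $g^\ell$. By uniqueness of the escape-rate function this gives $G\circ h=\deg(h)\,G$, hence $h^{-1}(K(g))=K(g)$. When $\deg h\ge 2$ this yields $J(h)=J(g)$; when $\deg h=1$ it yields $h(J(g))=J(g)$. Let $\Sigma$ be the group of affine maps preserving $J(g)$. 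Since $g$ is disintegrated, $J(g)$ is neither a circle nor a segment, so $\Sigma$ is finite. After centering, $\Sigma$ is a finite cyclic group of rotations $z\mapsto\zeta z$; let $L$ be a generator and $r$ its order. Every linear element of $\mathcal{C}_g$ lies in $\Sigma$, and conversely every $\sigma\in\Sigma$ commutes with some iterate of $g$. The latter follows from the Baker--Eremenko relation $g\circ\sigma=\sigma'\circ g$ with $\sigma'\in\Sigma$: conjugation by $g$ permutes the finite group $\Sigma$, so some iterate $g^\ell$ commutes with $\sigma$. This shows that the degree-one elements of $\mathcal{C}_g$ are exactly $L^0,\dots,L^{r-1}$.

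\emph{Step 2 (choice of $\tilde g$ and the twisting relation).} Take $\tilde g\in\mathcal{C}_g$ of minimal degree $>1$. By the Baker--Eremenko theorem, polynomials with the same non-exceptional Julia set commute up to an element of $\Sigma$. Hence $\tilde g\circ L=\sigma\circ\tilde g$ for some $\sigma\in\Sigma$. Comparing leading behaviour in B\"ottcher coordinates at $\infty$, where $L$ acts as $w\mapsto\zeta w$ and $\tilde g$ acts as $w\mapsto c\,w^{\deg\tilde g}$, identifies $\sigma=L^D$ with $D\equiv\deg\tilde g\pmod r$. The assignment $L\mapsto L^D$ is an injective endomorphism of the cyclic group $\langle L\rangle$, because $\tilde g\circ L^a=L^{aD}\circ\tilde g$ with $L^{aD}=\mathrm{id}$ forces $L^a=\mathrm{id}$. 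Therefore it is an automorphism, and $\gcd(D,r)=1$.

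\emph{Step 3 (every $h\in\mathcal{C}_g$ is $L^i\circ\tilde g^j$); the main obstacle.} Given $h\in\mathcal{C}_g$ of degree $>1$, Ritt's theorem on commuting non-exceptional polynomials, in the form ``same Julia set implies common iterates up to $\Sigma$'', gives $h^a=\sigma_1\circ\tilde g^b$ with $\sigma_1\in\Sigma$. So $\deg h$ and $\deg\tilde g$ are multiplicatively dependent. The hard part is to upgrade this to $\deg h=(\deg\tilde g)^j$ and then to $h=L^i\circ\tilde g^j$. I would attack it through the B\"ottcher coordinate $\phi$ of $g$ near $\infty$. In that coordinate each element of $\mathcal{C}_g$ becomes $w\mapsto\epsilon\,w^{m}$ with $\epsilon$ a root of unity, by uniqueness of B\"ottcher coordinates up to roots of unity. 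Composition then corresponds to multiplication in the semigroup of such monomials. Write $\deg h=q\cdot(\deg\tilde g)^j+s$, or more precisely consider the multiplicative quotient. If $\deg h$ is not a power of $\deg\tilde g$, I would use Ritt's decomposition theory for $h^a=\sigma_1\tilde g^b$, specifically uniqueness of right compositional factors of a given degree up to linear maps for non-exceptional polynomials. This should produce a right factor of $h$ that lies in $\mathcal{C}_g$ and has degree strictly between $1$ and $\deg\tilde g$, contradicting minimality. Once $\deg h=(\deg\tilde g)^j$, the map $h\circ(\tilde g^j)^{-1}$ is, in B\"ottcher coordinates, a rotation $w\mapsto\epsilon w$. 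It therefore extends to an affine symmetry of $J(g)$, hence equals some $L^i$. The twisting relation from Step 2 then shows that the set $\{L^i\circ\tilde g^j\}$ is closed under composition. This gives \eqref{eq:C_g}, and the bound $0\le i\le r-1$ holds because $L$ has order $r$.
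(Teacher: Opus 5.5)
You are right that the statement as quoted is false without a disintegratedness hypothesis. Your $z^d$ example is correct, and $\pm C_d$ fails in the same way. The paper gives no proof of its own, only the citation \cite[Proposition~2.8]{ANT}, and it uses the result only under hypotheses~(i)--(ii) of Proposition~\ref{prop:poly}, which make $f_t$ disintegrated, so your restriction is harmless.

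The genuine error is in Step~1. You take $L$ to generate the whole symmetry group $\Sigma$ of $J(g)$, claiming that $\sigma\mapsto\sigma'$ (defined by $g\circ\sigma=\sigma'\circ g$) permutes $\Sigma$. It is a homomorphism $\Sigma\to\Sigma$, but its kernel $\{\sigma\in\Sigma\colon g\circ\sigma=g\}$ can be nontrivial, and then symmetries of $J(g)$ need not commute with any iterate of $g$. Take $g(z)=z^2+c$ with $c\notin\{0,-2\}$, which is disintegrated. Since $g(-z)=g(z)$, complete invariance gives $J(g)=-J(g)$, so $-z\in\Sigma$. In fact $\Sigma=\{\pm z\}$, so your $L$ is $-z$ and $r=2$. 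But $g^\ell(-z)=g^\ell(z)$ for all $\ell\ge 1$, so $-z$ commutes with $g^\ell$ only if $g^\ell\equiv 0$. Thus $-z\notin\mathcal{C}_g$, yet it lies in your right-hand side of \eqref{eq:C_g}. Step~2 fails on the same example. Your B\"ottcher computation gives $D\equiv\deg\tilde g=2\pmod 2$, so $\gcd(D,r)=2$. The injectivity argument breaks because ``$\tilde g\circ L^a=\tilde g$ forces $L^a=\mathrm{id}$'' is an invalid right cancellation: here $\tilde g=g$ and $\tilde g\circ L=\tilde g$ with $L\ne\mathrm{id}$.

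The repair is to let $L$ generate $\Sigma_0:=\Sigma\cap\mathcal{C}_g$, the set of linear elements of $\mathcal{C}_g$. This is a group, since maps commuting with $g^{\ell_1}$ and with $g^{\ell_2}$ both commute with $g^{\ell_1\ell_2}$. In B\"ottcher coordinates $L$ is $w\mapsto\zeta w$ commuting with some $w^{d^\ell}$, so $\zeta^{d^\ell-1}=1$ and $\gcd(r,d)=1$. A common iterate gives $d^a=(\deg\tilde g)^b$, so every prime factor of $\deg\tilde g$ divides $d$. Hence $D\equiv\deg\tilde g\pmod r$ is coprime to $r$, with no cancellation argument needed. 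Likewise, at the end of Step~3 you must show that the linear map $\mu$ with $h=\mu\circ\tilde g^j$ lies in $\Sigma_0$, not merely in $\Sigma$. This is true: in B\"ottcher coordinates $\mu$ is $w\mapsto\epsilon w$, where $\epsilon$ is a quotient of two roots of unity whose orders divide $d^N-1$ for a suitable $N$, so $\mu$ commutes with $g^N$.

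Separately, Step~3 is still only a plan. Passing from ``$\deg h$ and $\deg\tilde g$ are multiplicatively dependent'' to $h=\mu\circ\tilde g^j$ is exactly the content of Ritt's theorem that commuting non-exceptional polynomials are, up to linear maps, iterates of a common polynomial. Decomposition theory (Engstrom, uniqueness of right factors) does produce compositional factors of the smaller $\gcd$-type degree. Nothing in your sketch shows that such a factor, even after composing with a linear map, commutes with an iterate of $g$, so the minimality of $\deg\tilde g$ is not actually contradicted. This step has to be proved or cited.
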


Now, we can prove the main result of Section~\ref{subsec:poly}.

\begin{proof}[Proof of Proposition~\ref{prop:poly}.]
First, we view $f_t$ as a polynomial in $\C[z]$ (by considering an embedding of $\Qbar(t)$ into $\C$).

Now, our goal is to show that there exists no nontrivial linear polynomial commuting with an iterate of $f_t$. So, let $L$ be the linear polynomial commuting with an iterate of $f_t$ appearing in the conclusion of Proposition~\ref{prop:commutation} (see also equation~\eqref{eq:C_g}). 

First, we claim that $L(0)=0$. Indeed, we know that $L$ commutes with some iterate $f_t^m$ (for some $m\in\N$). Because $f_t$ is in normal form, then also $f_t^m$ is a normalized polynomial, i.e., $f_t^m(z)$ has degree $d^m$, is monic and its coefficient of $z^{d^m-1}$ equals $0$. But then writing $L(z)=az+b$, imposing the condition that $L\circ f_t^m=f_t^m\circ L$ yields that $b=0$ (see Lemma~\ref{lem:conjugated}), as claimed. 

Now, let $\tilde{f}_t\in\C[z]$ be as in the conclusion of Proposition~\ref{prop:commutation} (see also equation~\eqref{eq:C_g}). Then we know that $L^D\circ \tilde{f}_t=\tilde{f}_t\circ L$ for some positive integer $D$. We already know that $L(z)=a z$ and so, equating the coefficient of $z^0$ in $L^D\circ \tilde{f}_t=\tilde{f}_t\circ L$, we get that $a^D=1$ (note that $\tilde{f}_t(0)\ne 0$ because otherwise, we would get that $f_t(0)=0$, contradiction). However, as noted in the conclusion of Proposition~\ref{prop:commutation}, $D$ must be coprime with the order of $L$, which contradicts that $a^D=1$, unless $a=1$. So, indeed, we conclude that the only linear polynomial commuting with some iterate of $f_t$ is the identity polynomial itself.

So, we obtained (see again Proposition~\ref{prop:commutation}) that all polynomials commuting with some iterate of $f_t$ are themselves iterates of some suitable polynomial $\tilde{f}_t$ (for which $\tilde{f}_t^k=f_t$ for some $k\in\N$). We claim that $\tilde{f}_t\in\Qbar[t][z]$, as desired in the conclusion of Proposition~\ref{prop:poly}.

First, we get that $\tilde{f}_t\in\Qbar(t)[z]$ since otherwise, we could pick some  $\C$-automorphism $\sigma$ which fixes $\Qbar(t)$, but which does not fix all coefficients of $\tilde{f}_t(z)$. Then $\tilde{f}_t^{\sigma}$ also commutes with an iterate of $f_t$ and it has the same degree as $\tilde{f}_t$; this contradicts the fact that all polynomials commuting with $f_t$ are iterates of $\tilde{f}_t$. So, this contradiction shows that we must have that $\tilde{f}_t\in\Qbar(t)[z]$.

Second, we get that the coefficients of $\tilde{f}_t(z)$ (which are themselves rational functions from $\Qbar(t)$) do not have poles, since otherwise any iterate of $\tilde{f}_t(z)$ would also have poles, thus contradicting that $\tilde{f}_t^k=f_t\in\Qbar[t][z]$. Indeed, writing

$$
\tilde{f}_t(z)=\sum_{j=0}^e b_j(t)\cdot z^j\text{ for some $b_j\in \Qbar(t)$},
$$
using that $\tilde{f}_t^k$ is a polynomial in normal form, we get that $b_e(t)$  must be a constant polynomial and furthermore, $b_{e-1}$ must be the zero polynomial. Then letting $j_0$ be the largest index for which we assume $b_{j_0}\notin\Qbar[t]$, we immediately get (using induction on $k$) that the coefficient of $z^{e^k-(e-j_0)}=z^{d-(e-j_0)}$ in $\tilde{f}_t^k(z)$ must also have a pole (as a rational function in $\Qbar(t)$), thus contradicting the fact that $\tilde{f}_t^k=f_t\in \Qbar[t][z]$. This shows that indeed, $\tilde{f}_t\in\Qbar[t][z]$, which concludes our proof of Proposition~\ref{prop:poly}.
\end{proof}


\section{The converse implication in Theorem~\ref{thm:main}}
\label{sec:converse}

We work with the notation as before, i.e., we have a family $f_t(z)\in\Qbar[t][z]$ of polynomials in normal form of degree $d\ge 2$, i.e.,
\begin{equation}
\label{eq:0000}
f_t(z)=z^d+\sum_{i=0}^{d-2} c_i(t)\cdot z^i\text{ with $c_i(t)\in\Qbar[t]$ for each $i=0,\dots, d-2$.}
\end{equation}
Also, we let $\alpha_1,\alpha_2,\beta\in\Qbar[t]$. 
We  work with a \emph{subset} of the general hypotheses from Theorem~\ref{thm:main}, i.e.,
\begin{equation}
\label{eq:III}
\text{$c_0(t)$ is not identically equal to $0$;}
\end{equation}
\begin{equation}
\label{eq:I}
\text{not all polynomials $c_0(t),\dots, c_{d-2}(t),\alpha_1(t),\alpha_2(t), \beta(t)$ are constant}
\end{equation}
and also,
\begin{equation}
\label{eq:II}
\text{ for each $i=1,2$, $\alpha_i$ is not persistent preperiodic under the action of $f_t(z)$.}
\end{equation}

The  main result of this Section is the following.
\begin{proposition}
\label{prop:converse}
Let $f_t(z),\alpha_1(t),\alpha_2(t),\beta(t)$ satisfy the conditions \eqref{eq:0000}, \eqref{eq:III}, \eqref{eq:I} and \eqref{eq:II}. Furthermore, assume that at least one of the following four conditions holds: 
\begin{itemize}
\item[(A)] there exists $\ell\in\N$ and there exists $i\in\{1,2\}$ such that 
\begin{equation}
\label{eq:1-22}
f_t^\ell\left(\alpha_i(t)\right)=\beta(t).
\end{equation} 
\item[(B)] there exist positive integers $\ell_1,\ell_2$  such that
\begin{equation}
\label{eq:2-22}
f_t^{\ell_1}\left(\alpha_1(t)\right) =f_t^{\ell_2}\left(\alpha_2(t)\right).
\end{equation}
\item[(C)] there exists $\tilde{f}_t\in\Qbar[t][z]$ and there exist integers $k\ge 2$ and $\ell_1,\ell_2,\ell_3\ge 1$   such that
\begin{equation}
\label{eq:C1-22}
f_t=\tilde{f}_t^k\text{ and also,} 
\end{equation}
\begin{equation}
\label{eq:C2-22}
\tilde{f}_t^{\ell_1}(\alpha_1(t))= \tilde{f}_t^{\ell_2}(\alpha_2(t))= \tilde{f}_t^{\ell_3}(\beta(t)).
\end{equation}
\item[(D)] there exists $\tilde{f}_t\in\Qbar[t][z]$ and there exist integers $k\ge 2$ and $\ell_1,\ell_2\ge 1$   such that 
\begin{equation}
\label{eq:D1-3}
f_t=\tilde{f}_t^k\text{ and also, we have} 
\end{equation}
\begin{equation}
\label{eq:D2-3}
\tilde{f}_t^{\ell_1}(\alpha_1(t))=\tilde{f}_t^{\ell_2}(\alpha_2(t)).
\end{equation}
Furthermore, there exists $N_0\in\N$ satisfying the following two conditions:
\begin{equation}
\label{eq:D4-3}
\gcd(N_0,k)\mid \ell_2-\ell_1\text{, and }
\end{equation} 
\begin{equation}
\label{eq:D3-3}
\tilde{f}_t^{N_0}(\beta(t))=\beta(t).
\end{equation}
\end{itemize} 
Then the set  
\begin{equation}
\label{eq:converse_condition}
C_f(\alpha_1,\alpha_2;\beta)=\left\{\l\in\Qbar\colon \exists m,n\in\N\text{ such that }
f_\l^m(\alpha_1(\l))=f_\l^n(\alpha_2(\l))=\beta(\l)\right\}
\end{equation}
is infinite.
\end{proposition}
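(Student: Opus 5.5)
The whole argument runs through the key input announced in Section~\ref{subsec:strategy}, Proposition~\ref{prop:real_converse}. For an active point $\alpha$ of a family $g_t$ and any $\gamma\in\Qbar[t]$, it gives infinitely many $\l\in\Qbar$ such that $\gamma(\l)$ lies in the forward orbit of $\alpha(\l)$ under $g_\l$. I apply it with $g_t$ equal to either $f_t$ or $\tilde{f}_t$.

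The first step is to check that $\alpha_1,\alpha_2$ are active for $f_t$, and also for $\tilde f_t$ whenever $f_t=\tilde{f}_t^k$ (the canonical heights agree up to the factor $k$ normalisation). By Proposition~\ref{prop:nonzero_height} and hypothesis~\eqref{eq:II}, if some $\alpha_i$ were not active, then $f_t$ would be isotrivial. Lemma~\ref{lem:iso} would then force every $c_j$ to be constant. In that case I expect to use \eqref{eq:I}: some $\alpha_i$ or $\beta$ is nonconstant, and the isotrivial case needs a separate small argument. There I would work directly with the constant polynomial $f$ and count roots in $\l$ of $f^m(\alpha_1(\l))-\beta(\l)$, which are nonconstant polynomials for large $m$. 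I flag this degenerate case as one obstacle to handle carefully. Otherwise activity holds, and the rest is bookkeeping.

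\emph{Case (A).} Suppose $f_t^\ell(\alpha_1)=\beta$. Apply Proposition~\ref{prop:real_converse} to $\alpha_2$ and $\gamma=\beta$. This gives infinitely many $\l$ with $f_\l^n(\alpha_2(\l))=\beta(\l)$, and $m=\ell$ works for $\alpha_1$ automatically.

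\emph{Case (B).} Suppose $f_t^{\ell_1}(\alpha_1)=f_t^{\ell_2}(\alpha_2)$. Apply the proposition to $\alpha_1':=f_t^{\ell_1}(\alpha_1)$, which is still active, and $\beta$. This gives $f_\l^{j}(\alpha_1'(\l))=\beta(\l)$, so $m=\ell_1+j$ and $n=\ell_2+j$.

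\emph{Case (C).} Put $\alpha':=\tilde f_t^{\ell_1}(\alpha_1)=\tilde f_t^{\ell_2}(\alpha_2)=\tilde f_t^{\ell_3}(\beta)$. Apply the proposition to $\tilde f_t$, the active point $\alpha'$, and the target $\beta$. This yields infinitely many $\l$ with $\tilde f_\l^{j}(\alpha'(\l))=\beta(\l)$. Then $\beta(\l)$ is $\tilde f_\l$-periodic, since $\tilde f_\l^{\ell_3+j}(\beta(\l))=\beta(\l)$; write $p=\ell_3+j$. Now $\tilde f_\l^{\ell_1+j+sp}(\alpha_1(\l))=\beta(\l)$ for all $s\ge0$, and similarly for $\alpha_2$ with $\ell_2$. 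I need $s$ with $k\mid \ell_1+j+sp$ and $s'$ with $k\mid\ell_2+j+s'p$, which requires $\gcd(p,k)$ to divide both $\ell_1+j$ and $\ell_2+j$. To guarantee this, I instead apply the proposition with target $\tilde f_t^{\,r}(\beta)$ for a suitable shift $r$, or to the family $f_t=\tilde f_t^{k}$ with point $\tilde f_t^{r}(\alpha')$, choosing $r$ so that the exponents become multiples of $k$. Concretely, I apply the proposition to $f_t$, the active point $\tilde f_t^{\,k-\ell_3}(\alpha')$ (adjusted mod $k$), and target $\beta$. This gives $f_\l^{j}\tilde f_\l^{k-\ell_3}(\alpha')=\beta$. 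Then $\beta(\l)$ is fixed by $f_\l^{j+1}$, since $\tilde f_\l^{\ell_3}$ composed with the shift gives a multiple of $k$. From $\alpha_i$ the total exponent is $\ell_i+(k-\ell_3)+kj$, and adding multiples of $k(j+1)$ is allowed. Since each residue mod $k$ can be reached by going around the cycle of $\beta$, I expect the residues to be fixable. This arithmetic is the main obstacle to pin down.

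\emph{Case (D).} Apply the proposition to $f_t$, the active point $\tilde f_t^{\ell_1}(\alpha_1)$, and target $\beta$. This gives $\tilde f_\l^{\ell_1+kj}(\alpha_1(\l))=\beta(\l)$, so $\alpha_1$ is handled. For $\alpha_2$, $\tilde f_\l^{\ell_2+kj+N_0 s}(\alpha_2(\l))=\beta(\l)$ for all $s\ge0$, using \eqref{eq:D3-3}. I need $\ell_2+kj+N_0s\equiv 0\pmod k$ for some $s$. Reducing via $\ell_1+kj\equiv0$ (choosing $\ell_1$ multiple of $k$ after shifting, as in the previous case), this is $\ell_2-\ell_1+N_0s\equiv0\pmod k$, which is solvable exactly by \eqref{eq:D4-3}.
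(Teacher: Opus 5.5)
\textbf{Gap in Case (C).} Your concrete starting point $\tilde f_t^{\,r}(\alpha')$ with $r+\ell_3\equiv 0\pmod k$ is just $f_t^{q}(\beta)$, where $qk=r+\ell_3$. So Proposition~\ref{prop:real_converse} only produces parameters $\lambda$ with $\tilde f_\lambda^{k(j+q)}(\beta(\lambda))=\beta(\lambda)$. The minimal $\tilde f_\lambda$-period $p$ of $\beta(\lambda)$ then divides $k(j+q)$, and nothing prevents $\gcd(p,k)>1$.

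The exponent coming from $\alpha_i$ is $\equiv \ell_i-\ell_3\pmod k$. Adding multiples of $k(j+q)$ does not change this residue at all. Going around the true cycle of $\beta(\lambda)$ changes it only by multiples of $\gcd(p,k)$. So your claim that ``each residue mod $k$ can be reached'' is unjustified.

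For a concrete failure, take $k=2$, $\alpha_1=\beta$ and $\tilde f_t(\alpha_2)=\beta$, so $\ell_1=\ell_3=1$ and $\ell_2=2$. Then $\lambda\in C_f(\alpha_1,\alpha_2;\beta)$ requires $\tilde f_\lambda^{2n-1}(\beta(\lambda))=\beta(\lambda)$ for some $n$, i.e.\ $\beta(\lambda)$ must have odd $\tilde f_\lambda$-period. Your construction only yields $\lambda$ with $\tilde f_\lambda^{2(j+1)}(\beta(\lambda))=\beta(\lambda)$, which carries no parity information.

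The missing idea is to choose the shift so that the period is forced to be coprime to $k$, i.e.\ $r+\ell_3\equiv 1\pmod k$ rather than $\equiv 0$. The paper applies Proposition~\ref{prop:real_converse} to $f_t$ with starting point $\tilde f_t(\beta(t))$ and target $\beta(t)$. This starting point is not persistently preperiodic, since $\tilde f_t^{\ell_3}(\beta)=\tilde f_t^{\ell_1}(\alpha_1)$. The proposition gives infinitely many $\lambda$ with $\beta(\lambda)=f_\lambda^{s}(\tilde f_\lambda(\beta(\lambda)))=\tilde f_\lambda^{sk+1}(\beta(\lambda))$. Hence $p\mid sk+1$ and $\gcd(p,k)=1$. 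Now pick $q_3$ with $p\mid \ell_3+q_3$, and, using coprimality, pick $q_1,q_2$ with $k\mid \ell_i+q_3+q_ip$. This gives $f_\lambda^m(\alpha_1(\lambda))=f_\lambda^n(\alpha_2(\lambda))=\beta(\lambda)$.

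Your other cases are fine. Cases (A) and (B) match the paper. In (D), the claim ``so $\alpha_1$ is handled'' is false unless $k\mid\ell_1$. However, the simultaneous shift of $\ell_1,\ell_2$ that you then invoke repairs it, since it preserves both \eqref{eq:D2-3} and $\ell_2-\ell_1$. The paper instead applies the proposition to $\alpha_1$ itself and adjusts the exponent for $\alpha_2$ by multiples of $N_0$.

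Finally, Proposition~\ref{prop:real_converse} does not require activity, so no separate isotrivial argument is needed. Constant families are covered by Lemma~\ref{lem:almost_constant}, so in each case you only need to derive \eqref{eq:I-2} from \eqref{eq:I}; \eqref{eq:III-2} is automatic from $c_0\neq 0$. Also note that the proposition is stated only for monic normalized families. Applying it to $\tilde f_t$, whose leading coefficient need only be a root of unity, would need extra justification.
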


The main ingredient in our proof of Proposition~\ref{prop:converse} is the following result.
\begin{proposition}
\label{prop:real_converse}
Let $\alpha(t),\beta(t)\in\Qbar[t]$ and let $f_t(z)\in\Qbar[t][z]$ be a family of polynomials of degree $d\ge 2$ of the form~\eqref{eq:0000}, i.e., 
$$f_t(z)=z^d+\sum_{i=0}^{d-2}c_i(t)\cdot z^i.$$
satisfying also the following assumptions:
\begin{equation}
\label{eq:III-2}
\text{not all polynomials $c_0(t),\dots, c_{d-2}(t)$ and $\beta(t)$ are  equal to $0$; and}
\end{equation} 
\begin{equation}
\label{eq:I-2}
\text{not all polynomials $c_0(t),\dots, c_{d-2}(t),\alpha(t),\beta(t)$  are constant.} 
\end{equation}
Furthermore, assume
\begin{equation}
\label{eq:II-2}
\text{$\alpha$ is not persistent preperiodic under the action of $f_t$.}
\end{equation}
Then there exist infinitely many $\l\in\Qbar$ such that for some suitable $m\in\N$ (depending on $\l$), we have that $f_\l^m(\alpha(\l))=\beta(\l)$.
\end{proposition}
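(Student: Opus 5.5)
The plan is to argue by contradiction, studying the polynomials
$$Q_m(t):=P_{m,\alpha}(t)-\beta(t)=f_t^m(\alpha(t))-\beta(t)\in\Qbar[t].$$
The $\l$ we want are exactly the roots of the $Q_m$, $m\in\N$. So it suffices to show that the union over $m$ of the root sets of the $Q_m$ is infinite.

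\textbf{Step 1: reductions.} Since $\alpha$ is not persistent preperiodic, $Q_m\ne Q_{m'}$ for $m\ne m'$; in particular at most one $Q_m$ vanishes identically. I first treat the case $\hhat_t(\alpha(t))=0$. Then Proposition~\ref{prop:nonzero_height} and Lemma~\ref{lem:iso} give $f_t=f\in\Qbar[z]$, a constant family.
\begin{itemize}
\item If $\alpha$ is nonconstant, then $\deg_t f^m(\alpha(t))=d^m\deg\alpha$ for all $m$. We land in the same growing-degree situation as the active case, which is treated below.
\item If $\alpha$ is a constant $a$, then by \eqref{eq:I-2} $\beta$ is nonconstant, and $a$ is not preperiodic under $f$. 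Then $Q_m=f^m(a)-\beta(t)$ is nonconstant with roots in $\beta^{-1}(f^m(a))$. Since the values $f^m(a)$ are pairwise distinct, these fibers are pairwise disjoint and nonempty, which already gives infinitely many $\l$.
\end{itemize}
So from now on I may assume $\deg_t Q_m\to\infty$.

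\textbf{Step 2: the $S$-unit setup.} Suppose the set of all roots of all $Q_m$ is a finite set $S\subset\Qbar$. Then every nonzero $Q_m$ has the form $u_m\prod_{s\in S}(t-s)^{e_{m,s}}$ with $u_m\in\Qbar^*$. The recursion $f_t^{m+1}(\alpha)=f_t(f_t^m(\alpha))$ says that each pair $(Q_m,Q_{m+1})$ lies on the curve
$$\mathcal{X}\colon\ y=f_t(x+\beta(t))-\beta(t)$$
in $\mathbb{G}_m^2$ over the field $\Qbar(t)$. These are infinitely many distinct points of $\mathcal{X}$, because the degrees grow. Moreover they lie in the group $\Gamma^2$, where $\Gamma$ is generated by the $t-s$ ($s\in S$) and by the constants $u_m$.

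\textbf{Step 3: apply Laurent.} The main obstacle is that $\Qbar^*$ does not have finite rank, so Laurent's theorem \cite{Laurent} cannot be applied to $\Gamma$ directly. I would try two remedies. First, track the leading coefficients: they are $\mathrm{lc}(\alpha)^{d^m}$, adjusted in the degenerate cases by a fixed constant. One can then rescale by these explicit constants so that all points lie in a finitely generated group. If that fails, I would replace Laurent by the Mason--Stothers $abc$ theorem for the relation
$$Q_{m+1}+\beta=f_t(Q_m+\beta).$$
That theorem bounds the number of distinct roots from below in terms of the degree, which contradicts the finiteness of $S$ once $\deg Q_m\to\infty$.

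Along the Laurent route, Mordell--Lang for tori shows that $\mathcal{X}$, being irreducible and containing infinitely many points of a finite-rank group, must be a translate of a one-dimensional subtorus. Since $y$ is a polynomial of degree $d$ in $x$, this forces $f_t(x+\beta)-\beta=c\,x^d$, that is, $f_t(z)=(z-\beta)^d+\beta$. Comparing the $z^{d-1}$ coefficients with the normal form gives $d\beta=0$, so $\beta=0$ and $f_t=z^d$. This contradicts \eqref{eq:III-2} and completes the argument.
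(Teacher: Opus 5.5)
Your Laurent route is essentially the paper's proof. It has the same reduction to $\deg_t Q_m\to\infty$ (the paper's Lemmas~\ref{lem:almost_constant} and~\ref{lem:not_isotrivial}), the same curve $y+\beta(t)=f_t(x+\beta(t))$ in $\mathbb{G}_m^2$ over $\Qbar(t)$ containing the points $(Q_m,Q_{m+1})$, and Laurent's theorem forcing $f_t(z)=(z-\beta)^d+\beta$, hence $\beta=0$ and $f_t=z^d$, contradicting \eqref{eq:III-2}.

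The one claim to correct concerns the constants. In general $\mathrm{lc}(Q_m)$ is not a fixed multiple of $\mathrm{lc}(\alpha)^{d^m}$; for example, with $f_t=z^2+t^3$ and $\alpha=2t$, every $P_{m,\alpha}$ with $m\ge 1$ has leading coefficient $1$. The right statement is this: choose $n_0$ with $\deg_t P_{n_0,\alpha}>\max\{\deg_t\beta,\deg_t c_0,\dots,\deg_t c_{d-2}\}$ and set $A:=\mathrm{lc}(P_{n_0,\alpha})$. Since $f_t$ is monic, $\mathrm{lc}(Q_{n_0+m})=A^{d^m}$, so you simply add $A$ to the generators of your group and no rescaling is needed.

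Your Mason--Stothers fallback is also sound. Apply it to $Q_m^d+G(Q_m)=Q_{m+1}$, where $G(x):=f_t(x+\beta)-\beta-x^d\neq 0$ by \eqref{eq:III-2}, after dividing out $\gcd(Q_m^d,G(Q_m))$; this gives $\deg_t Q_m\le 2|S|+O(1)$, a contradiction. That route is more elementary and avoids both the bookkeeping of constants and Laurent's theorem.
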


In Section~\ref{subsec:converse_1}, we prove Proposition~\ref{prop:real_converse}. In Section~\ref{subsec:converse_2}, we derive Proposition~\ref{prop:converse} as a consequence of Proposition~\ref{prop:real_converse}.


\subsection{Proof of Proposition~\ref{prop:real_converse}}
\label{subsec:converse_1}

In this Section~\ref{subsec:converse_1}, we work with the notation of Proposition~\ref{prop:real_converse} for $f_t(z),\alpha(t),\beta(t)$ satisfying the hypotheses from \eqref{eq:0000}, \eqref{eq:III-2}, \eqref{eq:I-2} and \eqref{eq:II-2}. As an aside, we note that hypothesis \eqref{eq:III-2} is weaker than the corresponding hypothesis \eqref{eq:III} from Proposition~\ref{prop:converse}.

First, we deal with a very special case in Proposition~\ref{prop:real_converse}.
\begin{lemma}
\label{lem:almost_constant}
If the polynomials $c_0(t),\dots, c_{d-2}(t)$ and $\alpha(t)$ are all constant, then Proposition~\ref{prop:real_converse} holds.
\end{lemma}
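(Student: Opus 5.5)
In this special case $f_t=f$ is a fixed polynomial in $\Qbar[z]$ and $\alpha(t)=\alpha\in\Qbar$ is a constant. By \eqref{eq:I-2}, $\beta(t)$ must then be nonconstant. By \eqref{eq:II-2}, $\alpha$ is not preperiodic for $f$, so the orbit points $a_m:=f^m(\alpha)$, $m\in\N$, are pairwise distinct elements of $\Qbar$. The plan is to solve, for each $m\in\N$, the equation
$$\beta(\l)=a_m.$$
Because $\beta$ is nonconstant, this equation has at least one solution $\l_m\in\Qbar$, namely any root of the nonconstant polynomial $\beta(t)-a_m$. For such a $\l_m$ we get $f_{\l_m}^m(\alpha(\l_m))=f^m(\alpha)=a_m=\beta(\l_m)$, so $\l_m$ belongs to the set in the conclusion of Proposition~\ref{prop:real_converse}.

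It remains to see that infinitely many distinct parameters arise in this way. Suppose $\l_m=\l_{m'}$ for some $m\neq m'$. Then $a_m=\beta(\l_m)=\beta(\l_{m'})=a_{m'}$, which contradicts the fact that the orbit of $\alpha$ is infinite. Hence the map $m\mapsto\l_m$ is injective, and the set $\{\l_m\colon m\in\N\}$ is infinite, which proves the lemma.

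There is no real obstacle in this argument. The only points that need care are confirming that \eqref{eq:I-2} forces $\beta$ to be nonconstant when all the $c_i$ and $\alpha$ are constant, and that non-preperiodicity of $\alpha$ makes the values $a_m$ pairwise distinct, so that the parameters $\l_m$ are pairwise distinct as well. Hypothesis \eqref{eq:III-2} is not needed in this case.
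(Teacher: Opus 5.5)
Your proposal is correct and follows essentially the same route as the paper: $\beta$ is forced to be nonconstant, you solve $\beta(\l)=f^m(\alpha)$ for each $m$, and you use non-preperiodicity of $\alpha$ to get pairwise distinct values and hence pairwise distinct parameters $\l_m$. Your injectivity argument for $m\mapsto\l_m$ spells out explicitly what the paper only states briefly.
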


\begin{proof}[Proof of Lemma~\ref{lem:almost_constant}.]
Combining the hypothesis of Lemma~\ref{lem:almost_constant} with the hypothesis~\eqref{eq:I-2}, we conclude that $\beta(t)$ is \emph{not} a constant polynomial. But then for \emph{each} $n\in\N$, we can find $\l_n\in\Qbar$ with the property that
\begin{equation}
\label{eq:140}
\beta(\l_n)=f_{\l_n}^n(\alpha(\l_n))=f_t^n(\alpha(t))
\end{equation}
(note that the right-hand side of \eqref{eq:140} is independent of $t$ due to the hypothesis of Lemma~\ref{lem:almost_constant}). On the other hand, the right-hand side of \eqref{eq:140} \emph{changes} with $n$ due to the hypothesis~\eqref{eq:II-2} of Proposition~\ref{prop:real_converse}. Therefore, as we vary $n$, we do find infinitely many distinct $\l\in\Qbar$ satisfying the desired conclusion in Proposition~\ref{prop:real_converse}.
\end{proof}

Lemma~\ref{lem:almost_constant} allows us to work from now on in the proof of Proposition~\ref{prop:real_converse} under the following hypothesis:
\begin{equation}
\label{eq:I-3}
\text{not all polynomials $c_0(t),\dots, c_{d-2}(t),\alpha(t)$ are constant;}
\end{equation}
we note that \eqref{eq:I-3} is slightly stronger than hypothesis~\eqref{eq:I-2} from Proposition~\ref{prop:real_converse}.

For each $n\in\N$, we let 
\begin{equation}
\label{eq:1111}
P_{n,\alpha}(t):=f_t^n\left(\alpha(t)\right)\in\Qbar[t].
\end{equation}
The following easy fact is a consequence of hypotheses \eqref{eq:I-3} and \eqref{eq:II-2}.
\begin{lemma}
\label{lem:not_isotrivial}
There exists $n_0\in\N$ such that 
\begin{equation}
\label{eq:high_degree}
\deg_t\left(P_{n_0,\alpha}(t)\right)>\max\left\{\deg_t(\beta(t)), \max_{i=0}^{d-2} \deg_t\left(c_i(t)\right)\right\}.
\end{equation}
\end{lemma}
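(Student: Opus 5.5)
The plan is to show that $\deg_t(P_{n,\alpha}(t))\to\infty$ as $n\to\infty$. Then some $n_0$ satisfies \eqref{eq:high_degree}, since the right-hand side of \eqref{eq:high_degree} is a fixed finite number. We split into two cases according to whether the family $f_t$ is constant.

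\emph{Case 1: some $c_i(t)$ is nonconstant.} Set $M:=\max_{i}\deg_t(c_i(t))\ge 1$. The leading-term argument from the proof of Lemma~\ref{lem:iterates} (with $c_d=1$) gives the following. Once some iterate $P_{n_1,\alpha}$ has degree $>M$, the degrees $D_n$ of the later iterates satisfy $D_{n+1}=d\cdot D_n$. So the degrees grow without bound, and we are done.

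The remaining task in this case is to rule out that $\deg_t(P_{n,\alpha})\le M$ for all $n$. Suppose instead that all iterates have degree at most $M$. Then the limit in \eqref{eq:canonical_generic} gives $\hhat_t(\alpha(t))=0$. Since $\alpha$ is not persistent preperiodic by \eqref{eq:II-2}, Proposition~\ref{prop:nonzero_height} shows that $f_t$ is isotrivial: it is conjugate over $\overline{\Q(t)}$ to a polynomial in $\Qbar[z]$. Because $f_t$ is in normal form, Lemma~\ref{lem:iso} (applied after embedding $\overline{\Q(t)}$ into $\C$) yields $f_t\in\Qbar[z]$. That is, all $c_i$ are constant, which contradicts the assumption of this case.

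\emph{Case 2: all $c_i$ are constant.} By \eqref{eq:I-3}, $\alpha(t)$ is then nonconstant, so $\deg_t\alpha\ge1>0=M$. Lemma~\ref{lem:iterates} applies directly and gives $\deg_t(P_{n,\alpha})=d^n\deg_t(\alpha)\to\infty$.

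In both cases we then choose $n_0$ large enough that \eqref{eq:high_degree} holds. The only nontrivial step is the isotriviality reduction in Case 1, which is settled by Proposition~\ref{prop:nonzero_height} combined with Lemma~\ref{lem:iso}. One point to check there is that a bounded sequence of degrees really forces the height to be zero; this is immediate from the limit definition \eqref{eq:canonical_generic}.
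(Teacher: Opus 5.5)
Your proof is correct and follows essentially the same route as the paper. The constant-family case goes through Lemma~\ref{lem:iterates} using that $\alpha$ is nonconstant, and the nonconstant case uses non-isotriviality (via Lemma~\ref{lem:iso}) together with Proposition~\ref{prop:nonzero_height} to force $\deg_t(P_{n,\alpha}(t))$ to be unbounded. Your extra step showing geometric growth once the degree exceeds $M$ is harmless but not needed, since unboundedness alone already yields $n_0$.
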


\begin{proof}[Proof of Lemma~\ref{lem:not_isotrivial}.]
First, we note that if $c_0(t),\dots, c_{d-2}(t)$ are all constant polynomials, then the hypothesis~\eqref{eq:I-3} yields that $\alpha(t)$ is not a constant polynomial. Therefore, Lemma~\ref{lem:iterates} yields that 
\begin{equation}
\label{eq:141}
\deg_t\left(P_{n,\alpha}(t)\right)=\deg_t(\alpha(t))\cdot d^n\text{ for each $n\in\N$.}
\end{equation}
Equation~\eqref{eq:141} delivers the desired conclusion from Lemma~\ref{lem:not_isotrivial}. So, from now on, we assume in the proof of Lemma~\ref{lem:not_isotrivial} that 
\begin{equation}
\label{eq:I-4}
\text{not all polynomials $c_0(t),\dots, c_{d-2}(t)$ are constant.}
\end{equation}

Now, observe that under hypothesis~\eqref{eq:I-4}, $f_t(z)$ is not isotrivial, i.e., it is not conjugated to a polynomial in $\Qbar[z]$ (see Lemma~\ref{lem:iso}, coupled with the fact that not all polynomials $c_i(t)$ are constant, according to \eqref{eq:I-4}). So, using that the polynomial $f_t\in\Qbar[t][z]$ is not isotrivial, then Proposition~\ref{prop:nonzero_height} yields that $\deg_t\left(P_{n,\alpha}(t)\right)$ is unbounded as $n$ varies (because $\hhat_t(\alpha(t))>0$). In particular, there exists $n_0\in\N$ such that inequality~\eqref{eq:high_degree} holds, as desired.
\end{proof}

Let $n_0\in\N$ such that inequality~\eqref{eq:high_degree} is satisfied. We let $D_0\in\N$ be the degree of $P_{n_0,\alpha}(t)$ (as a polynomial in $t$) and we let $A\in\Qbar^\ast$ be the coefficient of $t^{D_0}$ in $P_{n_0,\alpha}(t)$. Lemma~\ref{lem:iterates} yields that 
\begin{equation}
\label{lem:deg_ind}
\text{for each $m\in\N$, we have that $\deg_t\left(P_{n_0+m,\alpha}(t)\right)=D_0\cdot d^m$.}
\end{equation}
Furthermore, an easy induction based on the recurrence relation
\begin{equation}
\label{eq:rec}
P_{n+1,\alpha}(t)=P_{n,\alpha}(t)^d + \sum_{i=0}^{d-2} c_i(t)\cdot P_{n,\alpha}(t)^i
\end{equation}
yields that 
\begin{equation}
\label{lem:deg_ind_2}
\text{for each $m\in\N$, the coefficient of $t^{D_0d^m}$ in $P_{n_0+m,\alpha}(t)$ is $A^{d^m}$.}
\end{equation}

We continue our proof of Proposition~\ref{prop:real_converse} and we argue by contradiction; hence, we assume there exist finitely many $\l_1,\dots, \l_s\in\Qbar$ (for some $s\in\N$) such that for each $m\in\N$, the only possible roots of the polynomial 
\begin{equation}
\label{eq:Q}
Q_{m,\alpha,\beta}(t):=P_{n_0+m,\alpha}(t)-\beta(t) 
\end{equation}
are $\l_1,\dots,\l_s$. Using again inequality~\eqref{eq:high_degree} along with equations \eqref{lem:deg_ind} and \eqref{lem:deg_ind_2} yields that for each $m\in\N$, there exist some nonnegative integers $e_{m,i}$ for $i=1,\dots, s$, such that
\begin{equation}
\label{eq:31}
Q_{m,\alpha,\beta}(t)=A^{d^m}\cdot \prod_{i=1}^s \left(t-\l_i\right)^{e_{m,i}}.
\end{equation}
Using the recurrence relation \eqref{eq:rec}, we have
\begin{equation}
\label{eq:42}
Q_{m+1,\alpha,\beta}(t)+\beta(t)=  \left(Q_{m,\alpha,\beta}(t)+\beta(t)\right)^d + \sum_{i=0}^{d-2} c_i(t)\cdot \left(Q_{m,\alpha,\beta}(t)+\beta(t)\right)^i.
\end{equation}
We let $X$ be the plane curve defined over $\Qbar(t)$  given by the equation (with respect to the pair of coordinates $(x,y)$ of $\A^2$):
\begin{equation}
\label{eq:43}
y+\beta(t)=\left(x+\beta(t)\right)^d + \sum_{i=0}^{d-2}c_i(t)\cdot \left(x+\beta(t)\right)^i.
\end{equation}
Equation \eqref{eq:42} yields that $X$  contains all points of the form $\left(Q_{m,\alpha,\beta}(t), Q_{m+1,\alpha,\beta}(t)\right)$ for each $m\in\N$. Furthermore, equation~\eqref{eq:31} yields that $X$ contains  infinitely many points from the subgroup $\Gamma$ of $\mathbb{G}_m^2\left(\Qbar(t)\right)$ spanned by $(A,1)$,  $(1,A)$, along with $(t-\l_i,1)$ and $(1,t-\l_i)$ for each $i=1,\dots, s$. Then the classical Mordell-Lang conjecture for $\mathbb{G}_m^2$ (proven by Laurent \cite{Laurent}) yields that $X$ is a coset of an $1$-dimensional subtorus of $\mathbb{G}_m^2$. In particular, it means that $X$ is given by an equation of the form
\begin{equation}
\label{eq:coset_tori}
x^ay^b=C\text{ for some $a,b\in\Z$ and $C\in\overline{\Q(t)}^\ast$.}
\end{equation}
Equation \eqref{eq:43} yields that the only possibility is for $b=1$,  $a=-d$ and $C=1$, i.e., the equation~\eqref{eq:42} must actually reduce to
\begin{equation}
\label{eq:44}
y=x^d.
\end{equation}
On the other hand, the only possibility for the equation~\eqref{eq:42} to reduce to equation~\eqref{eq:44} is when $\beta(t)=0$ and also,  $c_i(t)=0$ for each $i=0,\dots, d-2$. However, this contradicts hypothesis~\eqref{eq:III-2} that not all polynomials $c_i(t)$ (for $i=0,\dots, d-2$) along with polynomial $\beta(t)$ are equal to $0$. So, this contradiction means that equation~\eqref{eq:31} cannot hold, i.e.,  there must exist infinitely many $\l\in\Qbar$ such that for some suitable $n\in\N$ (depending on $\l$), we have $f_\l^n(\alpha(\l))=\beta(\l)$.

This concludes our proof of Proposition~\ref{prop:real_converse}. 


\subsection{Proof of Proposition~\ref{prop:converse}}
\label{subsec:converse_2}

We work under the hypotheses from Proposition~\ref{prop:converse} for $f_t(z),\alpha_1(t),\alpha_2(t),\beta(t)$, which satisfy \eqref{eq:0000}, \eqref{eq:III}, \eqref{eq:I} and \eqref{eq:II}. In order to prove the desired conclusion from Proposition~\ref{prop:converse}, we assume each time that one of the conditions~(A)-(D) hold  and then we prove that the set $C_f(\alpha_1,\alpha_2;\beta)$ from~\eqref{eq:converse_condition} must  be infinite.

\begin{lemma}
\label{lem:A-1-22}
If hypothesis~(A) is met, then the conclusion holds in Proposition~\ref{prop:converse}.  
\end{lemma}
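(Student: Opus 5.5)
Assume, say, that $f_t^\ell(\alpha_1(t))=\beta(t)$ for some $\ell\in\N$; the case $i=2$ is symmetric after swapping the roles of $\alpha_1$ and $\alpha_2$. The idea is to make $\alpha_2(\l)$ hit $\beta(\l)$ for infinitely many $\l$. Each such $\l$ then lies in $C_f(\alpha_1,\alpha_2;\beta)$ automatically, because $f_\l^\ell(\alpha_1(\l))=\beta(\l)$ holds for \emph{every} $\l\in\Qbar$ by specializing the polynomial identity \eqref{eq:1-22}. So it suffices to show that
$$\left\{\l\in\Qbar\colon \exists n\in\N\text{ with } f_\l^n(\alpha_2(\l))=\beta(\l)\right\}$$
is infinite, which is exactly the conclusion of Proposition~\ref{prop:real_converse} applied to the pair $(\alpha,\beta)=(\alpha_2,\beta)$.

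It remains to check the hypotheses of Proposition~\ref{prop:real_converse}.
\begin{enumerate}
\item[(i)] Hypothesis \eqref{eq:III-2} follows at once from \eqref{eq:III}, since $c_0(t)\ne 0$.
\item[(ii)] Hypothesis \eqref{eq:II-2} for $\alpha_2$ is part of \eqref{eq:II}.
\item[(iii)] Hypothesis \eqref{eq:I-2} asks that not all of $c_0,\dots,c_{d-2},\alpha_2,\beta$ be constant. Hypothesis \eqref{eq:I} gives this for the larger list that also contains $\alpha_1$, so we must exclude the case where only $\alpha_1$ is nonconstant.
\end{enumerate}

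Step (iii) is the only real obstacle, and it is a short one. Suppose $c_0,\dots,c_{d-2},\alpha_2,\beta$ are all constant while $\alpha_1$ is not. Then $f_t$ is a constant family, so Lemma~\ref{lem:iterates} applies to $\alpha_1$ and gives $\deg_t f_t^\ell(\alpha_1(t))=d^\ell\deg_t\alpha_1(t)>0$. This contradicts $f_t^\ell(\alpha_1(t))=\beta(t)$ with $\beta$ constant. Hence \eqref{eq:I-2} holds, Proposition~\ref{prop:real_converse} applies, and $C_f(\alpha_1,\alpha_2;\beta)$ is infinite.
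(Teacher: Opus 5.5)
Your proposal is correct and follows essentially the same route as the paper: take $i=1$, show that not all of $c_0,\dots,c_{d-2},\alpha_2,\beta$ are constant (otherwise the identity $f_t^\ell(\alpha_1(t))=\beta(t)$ would force $\alpha_1$ to be constant, contradicting \eqref{eq:I}), and then apply Proposition~\ref{prop:real_converse} to the pair $(\alpha_2,\beta)$. Your degree computation via Lemma~\ref{lem:iterates} and your check of \eqref{eq:III-2} from \eqref{eq:III} make explicit steps that the paper leaves implicit.
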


\begin{proof}[Proof of Lemma~\ref{lem:A-1-22}.]
Without loss of generality, we assume equation~\eqref{eq:1-22} (from hypothesis~(A)) is met for $i=1$, i.e., there exists $m\in\N$ such that 
\begin{equation}
\label{eq:144}
f_t^m(\alpha_1(t))=\beta(t). 
\end{equation}
We claim that in this case,
\begin{equation}
\label{eq:143}
\text{not all polynomials $c_0(t),\dots,c_{d-2}(t), \alpha_2(t),\beta(t)$ are constant.}
\end{equation}
Indeed, if $c_0(t),\dots, c_{d-2}(t),\alpha_2(t),\beta(t)$ are constant, then \eqref{eq:144} yields that also $\alpha_1(t)$ must be constant, thus contradicting our hypothesis~\eqref{eq:I}. So, from now on, in Lemma~\ref{lem:A-1-22} we may assume \eqref{eq:143} holds. This allows us to apply Proposition~\ref{prop:real_converse} to starting point $\alpha_2$ and target point $\beta$ (also, note that $\alpha_2$  is not persistent preperiodic according to hypothesis~\eqref{eq:II}); thus, we conclude that there exist infinitely many $\l\in\Qbar$ along with some suitable positive integers $n$ (depending on $\l$) such that 
\begin{equation}
\label{eq:145}
f_\l^n(\alpha_2(\l))=\beta(\l).
\end{equation} 
Coupling equations~\eqref{eq:145} and \eqref{eq:144} allows us to conclude our proof of Lemma~\ref{lem:A-1-22}.
\end{proof}

\begin{lemma}
\label{lem:B-1-22}
If hypothesis~(B) is met, then the conclusion holds in Proposition~\ref{prop:converse}.  
\end{lemma}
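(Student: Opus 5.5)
Under hypothesis~(B), we fix positive integers $\ell_1,\ell_2$ with $f_t^{\ell_1}(\alpha_1(t))=f_t^{\ell_2}(\alpha_2(t))$ and set $\gamma(t):=f_t^{\ell_1}(\alpha_1(t))\in\Qbar[t]$. The plan is to reduce to Proposition~\ref{prop:real_converse} applied with starting point $\gamma$ and target point $\beta$. Suppose that for some $\l\in\Qbar$ and some $m\in\N$ we have $f_\l^m(\gamma(\l))=\beta(\l)$. Then
\[
f_\l^{m+\ell_1}(\alpha_1(\l))=f_\l^{m+\ell_2}(\alpha_2(\l))=\beta(\l),
\]
and so $\l\in C_f(\alpha_1,\alpha_2;\beta)$. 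Consequently, it suffices to produce infinitely many such $\l$.

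We next check the hypotheses of Proposition~\ref{prop:real_converse} for the pair $(\gamma,\beta)$. Condition~\eqref{eq:III-2} follows from \eqref{eq:III}, since $c_0\ne 0$. Condition~\eqref{eq:II-2} holds because $\gamma$ lies in the forward orbit of $\alpha_1$: if $\gamma$ were persistent preperiodic, then the orbit of $\alpha_1$ under $f_t$ would be finite, which contradicts \eqref{eq:II}.

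The only step requiring care is \eqref{eq:I-2}, namely that not all of $c_0,\dots,c_{d-2},\gamma,\beta$ are constant. Suppose instead that all of them are constant. Then $f_t\in\Qbar[z]$ is a constant family. Since $f_t^{\ell_1}(\alpha_1(t))=\gamma$ is constant and a nonconstant polynomial composed with a nonconstant $\alpha_1(t)$ is nonconstant (as $f_t$ has degree $d\ge 2$ with constant coefficients), $\alpha_1$ must be constant. The same argument shows that $\alpha_2$ is constant. All of $c_0,\dots,c_{d-2},\alpha_1,\alpha_2,\beta$ would then be constant, contradicting \eqref{eq:I}. With all hypotheses verified, Proposition~\ref{prop:real_converse} yields infinitely many $\l$ as required, which proves the lemma.

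Everything here is routine. The only potential subtlety is the degree argument showing that a constant $f_t$ cannot map a nonconstant $\alpha_i$ to a constant, and this is immediate from the degree comparison $\deg_t f_t^{\ell}(\alpha_i(t))=d^{\ell}\deg_t\alpha_i(t)$.
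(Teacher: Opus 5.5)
Your proof is correct and follows the paper's argument: apply Proposition~\ref{prop:real_converse} to a point in the forward orbit of $\alpha_1$ with target $\beta$, and verify \eqref{eq:III-2}, \eqref{eq:I-2} and \eqref{eq:II-2} by the same degree and preperiodicity reasoning. The only difference is that the paper uses the starting point $f_t^{\max\{\ell_1,\ell_2\}}(\alpha_1(t))$ instead of your common value $f_t^{\ell_1}(\alpha_1(t))=f_t^{\ell_2}(\alpha_2(t))$, which makes your exponent bookkeeping slightly cleaner.
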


\begin{proof}[Proof of Lemma~\ref{lem:B-1-22}.]
So, we assume there exist positive integers $\ell_1,\ell_2$ such that 
\begin{equation}
\label{eq:51}
f_t^{\ell_1}(\alpha_1(t))=f_t^{\ell_2}(\alpha_2(t)). 
\end{equation}

Let $\ell:=\max\{\ell_1,\ell_2\}$. Since $\alpha_1$ is not persistent preperiodic for $f_t$ (according to hypothesis~\eqref{eq:II}), then also $f_t^\ell(\alpha_1(t))$ is not persistent preperiodic under the action of $f_t$. Furthermore, we claim that
\begin{equation}
\label{eq:146}
\text{not all polynomials $c_0(t),\dots, c_{d-2}(t),f_t^\ell(\alpha_1(t)),\beta(t)$ are constant}
\end{equation}
Indeed, if all polynomials $c_0(t),\dots, c_{d-2}(t),f_t^\ell(\alpha_1(t))$ were constant, then immediately we get that $\alpha_1(t)$ is also constant. Furthermore,  equation~\eqref{eq:51} yields that also $\alpha_2(t)$ is constant. So, then assuming that $\beta(t)$ is also constant would lead to a contradiction to hypothesis~\eqref{eq:I}. Therefore, \eqref{eq:146} must be valid. This allows us to apply Proposition~\ref{prop:real_converse} to starting point $f_t^\ell(\alpha_1(t))$ and target point $\beta(t)$ and thus, we get that there exist infinitely many $\l\in\Qbar$ along with suitable $m\in\N$ (depending on $\l$) such that 
\begin{equation}
\label{eq:50}
f_\l^{m}\left(f_\l^\ell(\alpha_1(\l))\right)=\beta(\l).
\end{equation}
Combining equations \eqref{eq:50} and \eqref{eq:51} yields that also 
\begin{equation}
\label{eq:52}
f_\l^{m+\ell+\ell_2-\ell_1}(\alpha_2(\l))=\beta(\l).
\end{equation}
Equations \eqref{eq:50} and \eqref{eq:52} provide the desired conclusion in Lemma~\ref{lem:B-1-22}.
\end{proof}

\begin{lemma}
\label{lem:C-1-22}
If hypothesis~(C) is met, then the conclusion holds in Proposition~\ref{prop:converse}.  
\end{lemma}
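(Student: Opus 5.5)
We reduce hypothesis~(C) to an application of Proposition~\ref{prop:real_converse} for the family $f_t=\tilde{f}_t^k$ with a suitable starting point, as in the proof of Lemma~\ref{lem:B-1-22}. Set $\gamma(t):=\tilde{f}_t^{\ell_1}(\alpha_1(t))=\tilde{f}_t^{\ell_2}(\alpha_2(t))=\tilde{f}_t^{\ell_3}(\beta(t))$. The idea is that if for some $\l$ and some $m$ we have $f_\l^m(\delta(\l))=\beta(\l)$, with $\delta$ lying in the $\tilde f$-orbit of $\gamma$, then $\beta(\l)$ is $\tilde{f}_\l$-periodic. Combined with $\tilde f^{\ell_i}(\alpha_i)=\gamma$, this should place $\beta(\l)$ in the forward $f_\l$-orbits of both $\alpha_1(\l)$ and $\alpha_2(\l)$.

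Concretely, choose $L\ge \max\{\ell_1,\ell_2\}$ with $L\equiv \ell_1 \pmod k$, and put $\delta_1(t):=\tilde{f}_t^{L-\ell_1}(\gamma(t))=\tilde f_t^{L}(\alpha_1(t))=f_t^{L/k}(\ldots)$. It is cleaner to set $\delta(t):=\tilde{f}_t^{kN}(\gamma(t))$ for a large $N$. Since $\alpha_1$ is not persistent preperiodic for $f_t$, it is not persistent preperiodic for $\tilde f_t$, so neither is $\gamma$ nor $\delta$ (for $f_t$). Exactly as in the proof of Lemma~\ref{lem:B-1-22}, not all of $c_0,\dots,c_{d-2},\delta,\beta$ are constant. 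Otherwise $\alpha_1,\alpha_2,\beta$ would all be constant; this uses that $\tilde f_t\in\Qbar[t][z]$ with constant $f_t$ forces $\tilde f_t$ constant, by Lemma~\ref{lem:iterates}-type degree counting. We also have $c_0\neq0$. So Proposition~\ref{prop:real_converse} gives infinitely many $\l$ with $f_\l^m(\delta(\l))=\beta(\l)$ for some $m\in\N$.

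For such $\l$, $\tilde f_\l^{km+kN}(\gamma(\l))=\beta(\l)$, and hence $\tilde f_\l^{km+kN+\ell_3}(\beta(\l))=\beta(\l)$. So $\beta(\l)$ is $\tilde f_\l$-periodic, of some period $p$ dividing $P:=km+kN+\ell_3$. Then $\tilde f_\l^{\ell_1+kN+km}(\alpha_1(\l))=\beta(\l)$, and we may add any multiple of $p$ to this exponent. We need an exponent that is a multiple of $k$ and is $\ge\ell_1$. This amounts to solving $\ell_1+jp\equiv 0\pmod k$, which requires $\gcd(p,k)\mid \ell_1$.

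This divisibility is the main obstacle. The plan is to choose $N$, which is free, and replace $\gamma$ by a point further along the orbit, so that the congruence conditions become automatic. Using the relation between $\ell_3$ and the period to pin down $\gcd(p,k)$ is the delicate step. If that fails, the fallback is to apply Proposition~\ref{prop:real_converse} instead to the family $\tilde f_t$ itself (still normalized with $c_0\ne0$) with starting point $\gamma$ and target $\tilde f_t^{j}(\beta)$, and to exploit periodicity and adjust residues modulo $k$ for both $\alpha_1$ and $\alpha_2$ simultaneously.
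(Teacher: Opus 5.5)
There is a genuine gap. The step you flag as ``the main obstacle'' is the whole content of the lemma, and the device you propose for it cannot work. With $\delta=\tilde{f}_t^{kN}(\gamma)$, the parameters from Proposition~\ref{prop:real_converse} satisfy $\tilde{f}_\l^{k(m+N)+\ell_3}(\beta(\l))=\beta(\l)$. This only gives $\gcd(p,k)\mid \ell_3$ for the period $p$ of $\beta(\l)$, and changing $N$ shifts the exponent by a multiple of $k$, so it gives no new information modulo $k$.

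This is not just a bookkeeping issue. Suppose $k\mid p$ and $k\nmid \ell_1$ (for instance when $k\mid\ell_3$, where your argument gives no control excluding $k\mid p$). The set of $E$ with $\tilde{f}_\l^{E}(\alpha_1(\l))=\beta(\l)$ is $E_0+p\N$, and it contains $\ell_1+k(N+m)$. Hence every such $E$ is $\equiv \ell_1 \pmod k$. So $\beta(\l)$ is genuinely not in the forward $f_\l$-orbit of $\alpha_1(\l)$, and the parameters you produce need not lie in $C_f(\alpha_1,\alpha_2;\beta)$.

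Your fallback does not repair this. The family $\tilde{f}_t$ need not be monic, so it is not of the form~\eqref{eq:0000}. In any case, the $\tilde{f}_\l$-exponents it produces have uncontrolled residues mod $k$.

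The missing idea is to choose the starting point so that the return time of $\beta(\l)$ is forced to be $\equiv 1\pmod k$. The paper applies Proposition~\ref{prop:real_converse} to $f_t$ with starting point $\tilde{f}_t(\beta(t))$ and target $\beta(t)$, checking non-preperiodicity and non-constancy essentially as you do. This gives $\beta(\l)=f_\l^{s}(\tilde{f}_\l(\beta(\l)))=\tilde{f}_\l^{ks+1}(\beta(\l))$, so the minimal period $r$ of $\beta(\l)$ divides $ks+1$ and $\gcd(r,k)=1$.

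With $r$ coprime to $k$, you can add multiples of $r$ to make the exponents for $\alpha_1$ and $\alpha_2$ simultaneously divisible by $k$. This follows from $\tilde{f}_\l^{\ell_1}(\alpha_1)=\tilde{f}_\l^{\ell_2}(\alpha_2)=\tilde{f}_\l^{\ell_3}(\beta)$ and the periodicity of $\beta(\l)$.

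Your own setup also works with one change. Replace $\tilde{f}_t^{kN}(\gamma)$ by $\tilde{f}_t^{j}(\gamma)$ with $j+\ell_3\equiv 1\pmod k$; your first attempt with $L\equiv \ell_1\pmod k$ aimed at the wrong residue. Then $p\mid km+j+\ell_3\equiv 1\pmod k$, and the congruences $\ell_i+j+km+qp\equiv 0\pmod k$ are solvable for $i=1,2$.
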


\begin{proof}[Proof of Lemma~\ref{lem:C-1-22}.]
So, according to equation~\eqref{eq:C1-22}, there exists a polynomial $\tilde{f}_t\in\Qbar[t][z]$ and an integer $k\ge 2$ such that 
\begin{equation}
\label{eq:eq:C}
\tilde{f}_t^k=f_t. 
\end{equation}
Furthermore, hypothesis~\eqref{eq:C2-22} yields that for some positive integers $\ell_1,\ell_2,\ell_3$, we have
\begin{equation}
\label{eq:eq:C2-22}
\tilde{f}_t^{\ell_1}(\alpha_1(t))=\tilde{f}_t^{\ell_2}(\alpha_2(t))= \tilde{f}_t^{\ell_3}(\beta(t)).
\end{equation}
Since we know that $\alpha_1,\alpha_2$ are not persistent preperiodic under the action of $f_t$ (see hypothesis~\eqref{eq:II}), then equation~\eqref{eq:eq:C} yields that $\beta$  is  not persistent preperiodic under the action of $f_t=\tilde{f}_t^k$; furthermore, $\tilde{f}_t(\beta(t))$ is also not persistent preperiodic under the action of $f_t$ (see again \eqref{eq:eq:C}). Moreover, we claim that
\begin{equation}
\label{eq:147}
\text{not all polynomials $c_0(t),\dots, c_{d-2}(t),\tilde{f}_t(\beta(t)),\beta(t)$ are constant.}
\end{equation}
Indeed, if we were to assume that all polynomials $c_0(t),\dots, c_{d-2}(t)$ are constant, then we would have that $\tilde{f}_t\in\Qbar[z]$ (since $f_t=\tilde{f}_t^k$). So, if also $\beta(t)$ is a constant polynomial, then equation~\eqref{eq:eq:C2-22} yields that both $\alpha_1(t)$ and $\alpha_2(t)$ would also have to be constant polynomials. But then we would have that $c_0(t),\dots, c_{d-2}(t)$, $\alpha_1(t)$, $\alpha_2(t)$ and $\beta(t)$ are all constant polynomials, which contradicts hypothesis~\eqref{eq:I}. Hence, we must have that equation~\eqref{eq:147} is valid. This allows us to  apply Proposition~\ref{prop:real_converse} to the dynamical system formed by the polynomial family $f_t(z)$, starting point $\tilde{f}_t(\beta(t))$ and target point $\beta(t)$. Therefore, we  conclude the existence of infinitely many $\l\in\Qbar$ along with a suitable positive integers $s$ (depending on $\l$) such that
\begin{equation}
\label{eq:eq:0000}
\beta(\l)=f_\l^s\left(\tilde{f}_\l(\beta(\l))\right)= \tilde{f}_\l^{s k+1}(\beta(\l)).
\end{equation}
So, let $\l\in\Qbar$ and let $s\in\N$ as in equation \eqref{eq:eq:0000}; therefore, $\beta(\l)$ is periodic under the action of $\tilde{f}_\l$ of minimal period $r$ (depending on $\l$) dividing $ks+1$. In particular, $r$ is coprime with $k$; combining this information with  \eqref{eq:eq:C2-22}, we get that for any $q_1,q_2,q_3\in\N$, we have
\begin{equation}
\label{eq:60}
\tilde{f}_\l^{\ell_1+q_3+q_1r}(\alpha_1(\l))= \tilde{f}_\l^{\ell_2+q_3+q_2r}(\alpha_2(\l))= \tilde{f}_\l^{\ell_3 + q_3}(\beta(\l)).
\end{equation}
Because $\gcd(r,k)=1$, equation~\eqref{eq:60} yields that we can find suitable $q_1,q_2,q_3\in\N$ and there exist positive integers $m,n,q_4$ with the property that
\begin{equation}
\label{eq:61}
\ell_1+q_3+q_1r=k\cdot m\text{, }\ell_2+q_3+q_2r=k\cdot n\text{ and }\ell_3 + q_3 = r q_4.
\end{equation}
Finally, combining equations~\eqref{eq:61} and \eqref{eq:60}, along with the fact that $\tilde{f}_t^k=f_t$ (and that $\beta(\l)$ has period $r$ under the action of $\tilde{f}_\l$), we get
$$f_\l^m(\alpha_1(\l))=f_\l^n(\alpha_2(\l))=\beta(\l).$$
This concludes our proof of Lemma~\ref{lem:C-1-22}.
\end{proof}

\begin{lemma}
\label{lem:D-1-22}
If hypothesis~(D) is met, then the conclusion holds in Proposition~\ref{prop:converse}.  
\end{lemma}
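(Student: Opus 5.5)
The plan is to reduce condition~(D) to Proposition~\ref{prop:real_converse}, applied with starting point $\alpha_1$ and target point $\beta$, and then to handle $\alpha_2$ by a Chinese Remainder argument modulo $k$ and the period of $\beta(\l)$.

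\emph{Checking the hypotheses of Proposition~\ref{prop:real_converse}.} Hypothesis \eqref{eq:III-2} holds because $c_0(t)\ne 0$ by \eqref{eq:III}. Hypothesis \eqref{eq:II-2} holds because $\alpha_1$ is not persistent preperiodic, by \eqref{eq:II}. For \eqref{eq:I-2}, suppose that $c_0,\dots,c_{d-2},\alpha_1,\beta$ were all constant. Then $f_t\in\Qbar[z]$, and so $\tilde{f}_t\in\Qbar[z]$, exactly as in the proof of Lemma~\ref{lem:C-1-22}. By \eqref{eq:D2-3}, $\alpha_2(t)\in\Qbar[t]$ is then a root of the nonzero polynomial $\tilde{f}^{\ell_2}(z)-\tilde{f}^{\ell_1}(\alpha_1)\in\Qbar[z]$. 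Hence $\alpha_2$ is constant as well, which contradicts \eqref{eq:I}. So Proposition~\ref{prop:real_converse} applies: there are infinitely many $\l\in\Qbar$, each with some $s\in\N$, such that $f_\l^s(\alpha_1(\l))=\beta(\l)$, that is, $\tilde{f}_\l^{ks}(\alpha_1(\l))=\beta(\l)$.

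\emph{Building the exponents for a fixed such $\l$.} By \eqref{eq:D3-3}, $\beta(\l)$ is periodic under $\tilde{f}_\l$ with some minimal period $r$ dividing $N_0$. Hence $\gcd(r,k)$ divides $\gcd(N_0,k)$, which divides $\ell_2-\ell_1$ by \eqref{eq:D4-3}. By the Chinese Remainder Theorem, we can choose an integer $j\ge 0$ with
$$j\equiv 0\pmod r,\qquad j\equiv \ell_1-\ell_2\pmod k,$$
and we take $j$ large enough that $u:=ks+j-\ell_1\ge 0$. Then $\tilde{f}_\l^{ks+j}(\alpha_1(\l))=\tilde{f}_\l^{j}(\beta(\l))=\beta(\l)$, since $r\mid j$. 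The left-hand side equals $\tilde{f}_\l^{u}\bigl(\tilde{f}_\l^{\ell_1}(\alpha_1(\l))\bigr)$, which by \eqref{eq:D2-3} (specialized at $\l$) equals $\tilde{f}_\l^{\ell_2+u}(\alpha_2(\l))$. Now $\ell_2+u=ks+j+\ell_2-\ell_1\equiv 0\pmod k$, so we may write $\ell_2+u=kn$ with $n\in\N$, and this gives $f_\l^n(\alpha_2(\l))=\beta(\l)$. On the other side, $f_\l^{s}(\alpha_1(\l))=\beta(\l)$ by the choice of $\l$.

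\emph{Conclusion and main obstacle.} Every one of the infinitely many $\l$ supplied by Proposition~\ref{prop:real_converse} therefore lies in $C_f(\alpha_1,\alpha_2;\beta)$, so this set is infinite. The only delicate points are verifying the nonconstancy hypothesis \eqref{eq:I-2} and seeing that the divisibility condition \eqref{eq:D4-3} is precisely the solvability condition for the congruence system above.
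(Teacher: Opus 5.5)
Your proof is correct and follows essentially the same route as the paper. You apply Proposition~\ref{prop:real_converse} to $\alpha_1$ and $\beta$ after the same nonconstancy check, then shift by an exponent $j$ that is divisible by the period of $\beta(\l)$ and congruent to $\ell_1-\ell_2$ modulo $k$; this is solvable exactly because of \eqref{eq:D4-3}. The one minor difference is that you take $j$ large to keep all exponents nonnegative, whereas the paper discards the finitely many $\l$ with $m_\l\le \ell_1$ and assumes $\ell_2\ge\ell_1$. Your version is slightly cleaner.
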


\begin{proof}[Proof of Lemma~\ref{lem:D-1-22}.]
So, according to hypotheses \eqref{eq:D1-3} and \eqref{eq:D2-3}, there exists some $\tilde{f}_t(z)\in\Qbar[t][z]$ and there exists some integer $k\ge 2$ such that $f_t=\tilde{f}_t^k$. Also, there exist positive integers $\ell_1$ and $\ell_2$ such that
\begin{equation}
\label{eq:112}
\tilde{f}_t^{\ell_1}(\alpha_1(t))=\tilde{f}_t^{\ell_2}(\alpha_2(t)).
\end{equation}
Without loss of generality, we assume $\ell_2\ge \ell_1$. We claim that 
\begin{equation}
\label{eq:148}
\text{not all polynomials $c_0(t),\dots, c_{d-2}(t),\alpha_1(t),\beta(t)$ are constant.}
\end{equation}
Indeed, if $c_0(t),\dots, c_{d-2}(t)$ are constant polynomials, then $f_t(z)\in\Qbar[z]$ and so, also $\tilde{f}_t(z)\in\Qbar[z]$. Then assuming also that $\alpha_1(t),\beta(t)$ are constant polynomials, equation~\eqref{eq:112} yields that $\alpha_2(t)$ is a constant polynomial as well. But then we would have that $c_0(t),\dots, c_{d-2}(t)$, $\alpha_1(t)$, $\alpha_2(t)$ and $\beta(t)$ are constant polynomials, which contradicts hypothesis~\eqref{eq:I}. So, indeed, we must have that equation~\eqref{eq:148} is valid. 

Using \eqref{eq:148} along with hypothesis~\eqref{eq:II}, we can apply Proposition~\ref{prop:real_converse} to starting point $\alpha_1$ (which is not persistent preperiodic under the action of $f_t$) and target point $\beta$; thus, we get the existence of infinitely many $\l\in\Qbar$ along with suitable positive integers $m_\l$ such that
\begin{equation}
\label{eq:114}
f_\l^{m_\l}(\alpha_1(\l))=\beta(\l)\text{ and therefore, } \tilde{f}_{\l}^{km_\l}(\alpha_1(\l))=\beta(\l).
\end{equation}
Moreover, we may assume $m_\l>\ell_1$ in equation~\eqref{eq:114} since for each of the finitely many integers $i\in\{1,\dots, \ell_1\}$, there are only finitely many $\l\in\Qbar$ such that $\tilde{f}_\l^i(\alpha_1(\l))=\beta(\l)$. Indeed,  otherwise, we would get that for a suitable $i_0\in\{1,\dots, \ell_1\}$, we have that $\tilde{f}_t^{i_0}(\alpha_1(t))=\beta(t)$, which contradicts the hypothesis that $\alpha_1$ is not persistent preperiodic (note that $\beta$ is persistent periodic, according to equation~\eqref{eq:D3-3}).

So, using that $m_\l>\ell_1$, equations \eqref{eq:114} and \eqref{eq:112} yield that
\begin{equation}
\label{eq:115}
\beta(\l)=\tilde{f}_\l^{km_\l}(\alpha_1(\l))= \tilde{f}_\l^{km_\l-\ell_1+\ell_2}(\alpha_2(\l)).
\end{equation}
But then divisibility~\eqref{eq:D4-3} yields the existence of some suitable positive integer $i_\l>\ell_2$ such that
\begin{equation}
\label{eq:116}
N_0\mid ki_\l+\ell_1-\ell_2.
\end{equation}
We let $j_\l:=ki_\l+\ell_1-\ell_2$; then $j_\l$ is a positive integer because $i_\l>\ell_2$. Pre-composing the third part of  equation~\eqref{eq:115}  by $\tilde{f}_\l^{j_\l}$ and using both equation~\eqref{eq:116} and the fact that $\beta(\l)$ has period dividing $N_0$ due to hypothesis~\eqref{eq:D3-3}, we conclude that
\begin{equation}
\label{eq:117}
\beta(\l)=\tilde{f}_\l^{km_\l-\ell_1+\ell_2+j_\l}(\alpha_2(\l)).
\end{equation}
Using equation \eqref{eq:117} along with the fact that $j_\l=ki_\l+\ell_1-\ell_2$, we obtain that $\tilde{f}^{k(m_\l+i_\l)}(\alpha_2(\l))=\beta(\l)$. Coupling this with \eqref{eq:D1-3}, we have that
\begin{equation}
\label{eq:118}
f_\l^{m_\l+i_\l}(\alpha_2(\l))=\beta(\l).
\end{equation}
Equations \eqref{eq:115} and \eqref{eq:118} deliver the desired conclusion from Proposition~\ref{prop:converse}, as claimed in Lemma~\ref{lem:D-1-22}.
\end{proof}

Combining Lemmas~\ref{lem:A-1-22}, \ref{lem:B-1-22}, \ref{lem:C-1-22} and \ref{lem:D-1-22}, we obtain the desired conclusion in Proposition~\ref{prop:converse}, which provides the converse implication  from Theorem~\ref{thm:main}. 

The remaining sections of our paper will be used for proving the direct implication from Theorem~\ref{thm:main}; we will start with setting up the height machine in Section~\ref{sec:heights}.


\section{Heights}
\label{sec:heights}

Throughout Section~\ref{sec:heights}, we work with a family of polynomials $f_t(z)\in\Qbar[t][z]$ as in equation~\eqref{eq:-1}, i.e.,
\begin{equation}
\label{eq:eq:-1}
f_t(z)=z^d+\sum_{i=0}^{d-2} c_i(t)\cdot z^i\text{, where $c_i(t)\in\Qbar[t]$ for $i=0,\dots, d-2$.}
\end{equation}
We fix from now on a number field $L$ containing all coefficients of each $c_i(t)$. We start by defining the Weil height on $\Qbar$.


\subsection{The Weil height}
\label{subsec:Weil}

There exists a set $\Omega_L$ of absolute values on $L$ (extending the usual $p$-adic absolute values and the archimedean norm from $\Q$) along with suitable positive integers $n_v$ for each $v\in\Omega_L$ such that for each $\gamma\in L^\ast$, the following product formula holds:
\begin{equation}
\label{eq:70}
\prod_{v\in\Omega_L}|\gamma|_v^{n_v}=1.
\end{equation}
For each $v\in\Omega_L$, we fix an extension of the absolute value $|\cdot |_v$ to $\Qbar$. 
Then the Weil height $h(\cdot):=h_L(\cdot )$ is defined as follows. For each $\gamma\in\Qbar$, we let
\begin{equation}
\label{eq:71}
h(\gamma)=\frac{1}{[L(\gamma):L]}\cdot \sum_{v\in\Omega_L}n_v\cdot \sum_{\substack{\sigma:L(\gamma)\lra\Qbar\\ \sigma|_L={\rm id}|_L}} \log^+\left|\sigma(\gamma)\right|_v,
\end{equation}
where for each real number $a$, we use the notation $\log^+a:=\log\max\{1,a\}$.

Next, we define the canonical heights corresponding to the polynomials from the family of polynomials $\{f_\l(z)\}_{\l\in\Qbar}$.


\subsection{Canonical heights in our family of polynomials}
\label{subsec:canonical}

Let $v\in\Omega_L$ and let $\l\in\Qbar$.

For each $\gamma\in\Qbar$, we define the local canonical height of $\gamma$ with respect  to the polynomial $f_\l(z)$ from our family of polynomials~\eqref{eq:eq:-1}, as follows:
\begin{equation}
\label{eq:local_canonical}
\hhat_{\l,v}(\gamma):=\lim_{n\to\infty} \frac{\log^+\left|f_\l^n(\gamma)\right|_v}{d^n}.
\end{equation}
The global canonical height of $\gamma$ with respect to $f_\l$ is defined as follows:
\begin{equation}
\label{eq:global_canonical}
\hhat_\l(\gamma):=\lim_{n\to\infty} \frac{h\left(f_\l^n(\gamma)\right)}{d^n}.
\end{equation}
Using the fact that for given $\l,\gamma\in\Qbar$, we have that $\hhat_{\l,v}(\gamma)=0$ for all but finitely many places $v\in\Omega_L$,  then combining equations~\eqref{eq:global_canonical}, \eqref{eq:local_canonical} and \eqref{eq:71}, we get
\begin{equation}
\label{eq:72}
\hhat_\l(\gamma)= \frac{1}{[L(\lambda,\gamma):L]}\cdot \sum_{v\in\Omega_L} n_v\cdot \sum_{\substack{\sigma: L(\lambda,\gamma)\lra\Qbar\\ \sigma|_L={\rm id}|_L}} \hhat_{\sigma(\l),v}\left(\sigma(\gamma)\right).
\end{equation}
We also note the following  standard result (see \cite[Theorem~3.22]{Silverman}).
\begin{lemma}
\label{lem:clearly}
For each $\l\in\Qbar$ and for each $\gamma\in\Qbar$, we have that $\gamma$ is preperiodic under the action of $f_\l(z)$ if and only if $\hhat_\l(\gamma)=0$.
\end{lemma}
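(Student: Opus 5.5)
The plan is to reduce the statement to the corresponding standard facts for the single polynomial $g:=f_\l\in\Qbar[z]$ of degree $d\ge 2$. We fix a number field $K\supseteq L$ containing $\l$ and $\gamma$. Then the whole forward orbit of $\gamma$ lies in $K$, and Northcott's theorem applies: $K$ contains only finitely many elements of bounded Weil height.

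The first step is the height comparison $|h(g(x))-d\,h(x)|\le C$ for all $x\in\Qbar$, with $C$ depending only on $g$ (hence on $\l$). Since $g$ is monic, this follows by comparing $\log^+|g(x)|_v$ with $d\log^+|x|_v$ at each place $v$. When $|x|_v$ is large, the leading term dominates. When $|x|_v$ is bounded, both sides are bounded. Only finitely many places give a nonzero error, namely the archimedean places and those where some coefficient of $g$ is not a $v$-adic integer.

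The second step is the standard telescoping argument. From the comparison we get $|\hhat_\l(x)-h(x)|\le C/(d-1)$, together with $\hhat_\l(g(x))=d\,\hhat_\l(x)$. In particular, the limit in \eqref{eq:global_canonical} exists.

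For the forward direction, assume $\gamma$ is preperiodic. Then $h(g^n(\gamma))$ takes only finitely many values, so dividing by $d^n$ gives $\hhat_\l(\gamma)=0$.

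For the converse, assume $\hhat_\l(\gamma)=0$. Then $\hhat_\l(g^n(\gamma))=d^n\hhat_\l(\gamma)=0$ for every $n$. The comparison then gives $h(g^n(\gamma))\le C/(d-1)$ for all $n$. All these points lie in $K$, so Northcott's theorem shows the orbit is finite, i.e., $\gamma$ is preperiodic.

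The only point requiring care is the uniform bound in the first step. I expect it to be routine, since $\l$ is fixed, so this is simply \cite[Theorem~3.22]{Silverman} applied to $g$.
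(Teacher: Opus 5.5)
Your proof is correct and matches the paper: the paper gives no argument of its own and simply cites \cite[Theorem~3.22]{Silverman}, whose standard proof is exactly your combination of the comparison $h(f_\l(x))=d\,h(x)+O(1)$, the telescoping that yields $\hhat_\l=h+O(1)$ together with $\hhat_\l\circ f_\l=d\,\hhat_\l$, and Northcott's theorem in a number field containing $L$, $\l$ and $\gamma$. A minor attribution point: the uniform bound in your first step is really \cite[Theorem~3.11]{Silverman} (the same result the paper invokes later for $h(\beta(\l_k))$), while Theorem~3.22 is the lemma itself.
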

For more details regarding the local and global canonical heights associated to polynomials, see \cite{Call-Silverman, Silverman}. 


\subsection{Variation of the canonical height}
\label{subsec:variation}

We continue with our notation from Section~\ref{subsec:canonical} for the canonical heights $\hhat_\l$ constructed with respect to the polynomials $f_\l(z)$ from the family of polynomials \eqref{eq:eq:-1}. We recall that $L$ is a given number field containing the coefficients of each $c_i(t)$ for $i=0,\dots, d-2$, where $f_t(z)=z^d+\sum_{i=0}^{d-2}c_i(t)\cdot z^i$. 

Now, let $\alpha\in L[t]$. We recall the definition of $P_{n,\alpha}(t)=f_t^n(\alpha(t))$ (see also equation~\eqref{eq:1111}) and then recall the definition of the canonical height of $\alpha(t)$ with respect to $f_t(z)$: 
\begin{equation}
\label{eq:73}
\hhat_t(\alpha(t)):=\lim_{n\to\infty} \frac{\deg_t\left(P_{n,\alpha}(t)\right)}{d^n}.
\end{equation} 
The following important result is a special case of  \cite[Theorem~1]{Patrick-Crelle}.

\begin{proposition}
\label{prop:useful_variation}
With the above notation for $f_t(z)$ as in \eqref{eq:eq:-1}, $L$,  $\hhat_\l$ (for each $\l\in\Qbar$) as in \eqref{eq:global_canonical}, $\hhat_t$ as in \eqref{eq:73}, we let $\alpha\in L[t]$. Then there exists a positive real number $B$ with the property that for each $\l\in\Qbar$, we have: 
\begin{equation}
\label{eq:the_inequality}
\left|\hhat_\l(\alpha(\l))-\hhat_t(\alpha(t))\cdot h(\l)\right|\le B.
\end{equation}
\end{proposition}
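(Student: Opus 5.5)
The plan is to decompose both canonical heights into local contributions and compare place by place, as in the classical arguments of Call--Silverman and Tate. By \eqref{eq:72}, $\hhat_\l(\alpha(\l))$ is an average over places $v\in\Omega_L$ and Galois conjugates of the local heights $\hhat_{\sigma(\l),v}(\sigma(\alpha(\l)))$. The Weil height $h(\l)$ has the analogous decomposition with $\log^+|\sigma(\l)|_v$. It therefore suffices to find constants $B_v$, with $B_v=0$ for all but finitely many $v$, such that for every $\l\in\Qbar$
\begin{equation*}
\left|\hhat_{\l,v}(\alpha(\l))-\hhat_t(\alpha(t))\cdot\log^+|\l|_v\right|\le B_v .
\end{equation*}
Summing with the weights $n_v$ then gives \eqref{eq:the_inequality} with $B=\sum_v n_vB_v$.

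For the local estimate, I would first treat the easy case in which $\alpha$ is persistent preperiodic, where both sides vanish up to a bounded error. In the remaining case I would use the telescoping identity
\begin{equation*}
\hhat_{\l,v}(\gamma)=\log^+|\gamma|_v+\sum_{n\ge0}\frac{\log^+|f_\l(f_\l^n\gamma)|_v-d\log^+|f_\l^n\gamma|_v}{d^{n+1}} .
\end{equation*}
The key elementary bound is that, for $|z|_v$ large compared with $\max_i|c_i(\l)|_v^{1/(d-i)}$, we have $\log|f_\l(z)|_v=d\log|z|_v+O_v(1)$. This follows because $f_\l$ is monic, so the top term dominates. Here the implied constant is uniform, and it equals $0$ at non-archimedean places of good reduction once $|z|_v>\max(1,|c_i(\l)|_v)$.

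I would then split the analysis according to the size of $|\l|_v$. When $|\l|_v\le R_v$ for a suitable radius $R_v$, everything lies in a compact set. Continuity of the local canonical height in $(\l,\gamma)$, which follows from uniform convergence of the telescoping series on compacta, gives a bound. For $|\l|_v$ large, pick $n_0$ with $D_0:=\deg_t P_{n_0,\alpha}>\deg c_i$, as in Lemma~\ref{lem:not_isotrivial}. Then $|P_{n_0,\alpha}(\l)|_v=|A|_v|\l|_v^{D_0}(1+o(1))$, which dominates the escape radius $\asymp|\l|_v^{\max\deg c_i}$. By induction each further iterate stays in the escaping region, with $\log|P_{n_0+m,\alpha}(\l)|_v=d^m D_0\log|\l|_v+O_v(1)\cdot\sum_j d^{-j}d^m$. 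Dividing by $d^{n_0+m}$ gives $\hhat_{\l,v}(\alpha(\l))=\frac{D_0}{d^{n_0}}\log|\l|_v+O_v(1)$, and $\frac{D_0}{d^{n_0}}=\hhat_t(\alpha(t))$ by Lemma~\ref{lem:iterates}.

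If no such $n_0$ exists, the degrees stay bounded, so $\hhat_t(\alpha(t))=0$. In that case I would need $\hhat_{\l,v}(\alpha(\l))=O(\log^+|\l|_v)$ with a vanishing leading coefficient. I would handle this by the same continuity argument applied after the substitution $z=\l^{e}w$, with $e$ the natural weight of the family, or by invoking Proposition~\ref{prop:nonzero_height} to reduce to the isotrivial case, where it is direct.

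The main obstacle is uniformity in the intermediate range. There $|\l|_v$ is large, but the orbit of $\alpha(\l)$ may spend several steps below the escape radius before escaping. To control this I would use the fact that the number of such steps is at most $n_0$, which is independent of $\l$. Together with the almost-everywhere good-reduction vanishing of $B_v$, this completes the local-to-global argument.
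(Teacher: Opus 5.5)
Your proof is correct, but it is not the paper's route: the paper gives no argument for this proposition and quotes it as a special case of Ingram's theorem \cite[Theorem~1]{Patrick-Crelle}. You instead reconstruct the standard local argument (in the spirit of \cite{Tate, Call-Silverman}) in this particular setting.

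Since $f_t$ is monic in $z$ with coefficients in $L[t]$, the only degeneration is at $\l=\infty$, and you obtain $\hhat_{\l,v}(\alpha(\l))=\hhat_t(\alpha(t))\log^+|\l|_v+O_v(1)$ at every place. The error is identically $0$ at every $v$ where the coefficients of the $c_i$, of $\alpha$ and of $P_{n_0,\alpha}$ are $v$-integral and the leading coefficient $A$ of $P_{n_0,\alpha}$ is a $v$-unit. There the orbit of $\alpha(\l)$ is integral when $|\l|_v\le 1$, and $|P_{n_0+m,\alpha}(\l)|_v=|\l|_v^{D_0d^m}$ when $|\l|_v>1$. Summing via \eqref{eq:72} then gives the global bound.

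The citation buys a more general statement at no cost. Your route is elementary and self-contained, and it exposes the place-by-place structure (bounded local error, vanishing at almost all places), which is the same local picture behind the Green's functions $G_{\alpha_i,v}$ of Section~\ref{subsec:Berkovich}.

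Three points should be tightened.
\begin{enumerate}
\item For non-archimedean $v$ the disc $\{|\l|_v\le R_v\}\subset\C_v$ is not compact, so ``continuity on compacta'' does not apply. Replace it by the uniform bound $|\hhat_{\l,v}(\gamma)-\log^+|\gamma|_v|\le C_v$ for all $\gamma\in\C_v$ and $|\l|_v\le R_v$, which your telescoping series gives directly.
\item The ``intermediate range'' is not a real obstacle. Indeed $\hhat_{\l,v}(\alpha(\l))=d^{-n_0}\hhat_{\l,v}(P_{n_0,\alpha}(\l))$ exactly, and $R_v$ can be chosen so that $P_{n_0,\alpha}(\l)$ already lies in the escape region whenever $|\l|_v\ge R_v$.
\item In the case $\hhat_t(\alpha(t))=0$, drop the vague substitution $z=\l^{e}w$ and use your second option. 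If $\alpha$ is not persistent preperiodic, Proposition~\ref{prop:nonzero_height} together with Lemma~\ref{lem:iso} gives $f_t\in\Qbar[z]$. Then $\deg_t f_t^n(\alpha(t))=d^n\deg_t\alpha(t)$ forces $\alpha$ to be constant, so $\hhat_\l(\alpha(\l))$ does not depend on $\l$ at all.
\end{enumerate}
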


\begin{remark}
\label{rem:variation}
Proposition~\ref{prop:useful_variation} is part of a series of results regarding the variation of the canonical height
in an algebraic family of maps. Variants of Proposition~\ref{prop:useful_variation} were proven first over number fields
in the case of elliptic curves (see \cite{Tate}), and then extended to all abelian varieties (see \cite{Sil83}). Similar results were then proven for polynomial families (see \cite{Call-Silverman, Patrick-Crelle}) and also for certain families of rational functions (see \cite{G-M}).
\end{remark}

Using Proposition~\ref{prop:useful_variation}, we obtain a key result for our proof of the direct implication in Theorem~\ref{thm:main}. 


\subsection{A useful result regarding points of small heights}
\label{subsec:small_heights}
We prove the following.

\begin{proposition}
\label{prop:heights}
Let $L$ be a number field, let $h=h_L$ be its Weil height constructed as in \eqref{eq:71} and let $f_t(z)\in L[t][z]$ be a family of polynomials of degree $d\ge 2$ as in \eqref{eq:eq:-1}. We let $\alpha,\beta\in L[t]$ and also, for each $\l\in\Qbar$, let  $\hhat_\l$ be the canonical heights constructed as in \eqref{eq:global_canonical}. Assume $\alpha$ is an active point for $f_t$, i.e., $\hhat_t(\alpha(t))>0$. Furthermore, assume there exists a sequence $\{\l_k\}_{k\ge 1}$ of elements of $\Qbar$ along with a sequence $\{m_k\}_{k\ge 1}$ of positive integers such that 
\begin{equation}
\label{eq:75}
\lim_{k\to\infty}m_k=\infty 
\end{equation}
with the property that
\begin{equation}
\label{eq:20}
f_\l^{m_k}\left(\alpha(\l_k)\right)=\beta(\l_k)\text{ for each }k\ge 1.
\end{equation}
Then the following two conclusions must hold:
\begin{itemize}
\item[(1)] there exists a positive real number $C$ such that
\begin{equation}
\label{eq:160}
h(\l_k)\le C\text{ for each $k\in\N$.}
\end{equation}
\item[(2)] $\lim_{k\to\infty} \hhat_{\l_k}\left(\alpha(\l_k)\right)=0$.
\end{itemize}
\end{proposition}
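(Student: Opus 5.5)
We will compare the canonical heights of $\alpha(\l_k)$ and of $\beta(\l_k)$ with respect to $f_{\l_k}$, using functoriality $\hhat_\l(f_\l(\gamma))=d\cdot\hhat_\l(\gamma)$ together with the variation result, Proposition~\ref{prop:useful_variation}. From \eqref{eq:20} we get
\begin{equation*}
d^{m_k}\cdot \hhat_{\l_k}\left(\alpha(\l_k)\right)=\hhat_{\l_k}\left(f_{\l_k}^{m_k}(\alpha(\l_k))\right)=\hhat_{\l_k}\left(\beta(\l_k)\right).
\end{equation*}
The plan is to bound the right-hand side from above by a linear function of $h(\l_k)$, and the left-hand side from below by $d^{m_k}$ times a linear function of $h(\l_k)$ with positive slope. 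Since $m_k\to\infty$, these two bounds are incompatible unless $h(\l_k)$ stays bounded.

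For the upper bound, we use the standard comparison between canonical and Weil heights in the family. There is a constant $B_1$, depending only on $f_t$ and $L$, such that for all $\l,\gamma\in\Qbar$,
\begin{equation*}
\hhat_\l(\gamma)\le h(\gamma)+B_1\left(h(\l)+1\right).
\end{equation*}
This follows from the telescoping argument, using $|h(f_\l(\gamma))-d\,h(\gamma)|\le c\,(h(\l)+1)$, which holds because the coefficients $c_i(\l)$ have height $O(h(\l)+1)$. Combining this with $h(\beta(\l))\le \deg(\beta)\,h(\l)+O(1)$ gives $\hhat_{\l_k}(\beta(\l_k))\le B_2\,(h(\l_k)+1)$. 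Alternatively, and more cleanly, we can apply Proposition~\ref{prop:useful_variation} directly to $\beta$ in place of $\alpha$, which gives $\hhat_{\l}(\beta(\l))\le \hhat_t(\beta(t))\,h(\l)+B'$.

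For the lower bound, Proposition~\ref{prop:useful_variation} applied to $\alpha$ gives
\begin{equation*}
\hhat_{\l_k}(\alpha(\l_k))\ge \hhat_t(\alpha(t))\,h(\l_k)-B,
\end{equation*}
where $\hhat_t(\alpha(t))>0$ because $\alpha$ is active. Writing $H_k:=h(\l_k)$ and $\delta:=\hhat_t(\alpha(t))>0$, we obtain
\begin{equation*}
d^{m_k}\left(\delta H_k-B\right)\le B_2\left(H_k+1\right).
\end{equation*}
If $H_k$ were unbounded along a subsequence, then for $H_k>2B/\delta$ the left-hand side is at least $d^{m_k}\delta H_k/2$. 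Dividing by $H_k$ gives $d^{m_k}\delta/2\le 2B_2$ for large $H_k$, which contradicts $m_k\to\infty$. Since finitely many indices with small $m_k$ are harmless, this proves conclusion~(1).

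For conclusion~(2), with $h(\l_k)\le C$ we get $\hhat_{\l_k}(\beta(\l_k))\le B_2(C+1)$. Hence
\begin{equation*}
0\le \hhat_{\l_k}(\alpha(\l_k))\le d^{-m_k}B_2(C+1)\to 0.
\end{equation*}
The only step requiring some care is the uniform upper bound on $\hhat_\l(\beta(\l))$ in terms of $h(\l)$. Invoking Proposition~\ref{prop:useful_variation} for $\beta$ settles it without any extra work, since that proposition holds for any element of $L[t]$, active or not.
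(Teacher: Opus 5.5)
Your proposal is correct and follows essentially the same route as the paper. The paper also applies Proposition~\ref{prop:useful_variation} to both $\alpha$ and $\beta$, combines this with $d^{m_k}\hhat_{\l_k}(\alpha(\l_k))=\hhat_{\l_k}(\beta(\l_k))$ to bound $h(\l_k)$ using $m_k\to\infty$, and then feeds that bound back in to get $\hhat_{\l_k}(\alpha(\l_k))=O(d^{-m_k})$.
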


\begin{proof}
Proposition~\ref{prop:useful_variation} yields that there exist  real numbers $A_1,B_1\ge 0$ and $A_2,B_2>0$ with the property that 
\begin{equation}
\label{eq:22}
\left|\hhat_{\l}\left(\alpha(\l)\right)-A_1h(\l)\right|\le A_2\text{ for each $\l\in\Qbar$}
\end{equation}
and also,
\begin{equation}
\label{eq:23}
\left|\hhat_{\l}\left(\beta(\l)\right)-B_1h(\l)\right|\le B_2\text{ for each $\l\in\Qbar$.}
\end{equation}
Furthermore, Proposition~\ref{prop:useful_variation} coupled with the fact that $\alpha$ is an active point for $f_t$ yields that $A_1>0$ as well. On the other hand, equation~\eqref{eq:20} yields that 
\begin{equation}
\label{eq:24}
d^{m_k}\cdot \hhat_{\l_k}\left(\alpha(\l_k)\right)= \hhat_{\l_k}\left(\beta(\l_k)\right)\text{ for each $k\ge 1$.}
\end{equation}
Combining equations \eqref{eq:22}, \eqref{eq:23} and \eqref{eq:24} yields that for each $k\ge 1$, we have
\begin{equation}
\label{eq:25}
d^{m_k}\cdot \left(A_1\cdot h(\l_k)-A_2\right)\le B_1\cdot h(\l_k)+B_2;
\end{equation}
equation~\eqref{eq:25} immediately yields the inequality
\begin{equation}
\label{eq:26}
h(\l_k)\le \frac{d^{m_k}\cdot A_2+B_2}{d^{m_k}\cdot A_1 - B_1}\text{ for each $k\ge 1$.}
\end{equation}
Using our hypothesis that $\lim_{k\to\infty}m_k=\infty$, we conclude that there exists a positive real number $C$ such that 
\begin{equation}
\label{eq:27}
h(\l_k)\le C\text{ for each $k\ge 1$,}
\end{equation}
as desired in conclusion~(1) of Proposition~\ref{prop:heights} (see \eqref{eq:160}). 

Next, combining inequalities \eqref{eq:27} and \eqref{eq:23}, we get that
\begin{equation}
\label{eq:29}
\hhat_{\l_k}(\beta(\l_k))\le B_1C+B_2\text{ for each $k\ge 1$.}
\end{equation}
But then equations \eqref{eq:24} and \eqref{eq:29} yield that 
\begin{equation}
\label{eq:28}
\hhat_{\l_k}\left(\alpha(\l_k)\right)\le \frac{B_1C+B_2}{d^{m_k}}\text{ for each $k\ge 1$.} 
\end{equation}
Using inequality~\eqref{eq:28} along with the hypothesis that $\lim_{k\to\infty}m_k=\infty$ allows us to obtain the desired conclusion~(2). This concludes our proof for Proposition~\ref{prop:heights}.
\end{proof}

Next, in Section~\ref{subsec:constant_family}, we derive another useful result regarding heights.


\subsection{The special case of a constant family of polynomials}
\label{subsec:constant_family}

Throughout Section~\ref{subsec:constant_family}, we work with a constant family of polynomials, i.e., for each $i=0,\dots, d-2$, the polynomials $c_i(t)$ are constant in $\Qbar$. So, our family of polynomials $f_t(z)$ is of the form
\begin{equation}
\label{eq:161}
f_t(z):=z^d+\sum_{i=0}^{d-2}c_i z^i\text{, where each $c_i\in\Qbar$.}
\end{equation}
In particular, this means that for each $\l\in\Qbar$, we have
\begin{equation}
\label{eq:162}
f_\l(z)=f_t(z)=z^d+\sum_{i=0}^{d-2}c_i z^i.
\end{equation}
Therefore, for the sake of simplifying our notation, we write $f:=f_t=f_\l$ for each $\l\in\Qbar$. In particular, for any $\l,\gamma\in\Qbar$, the canonical height of $\gamma$ with respect to $f_\l(z)$ is simply the canonical height of $\gamma$ with respect to the polynomial $f(z)$ and it is denoted by $\hhat_f(\gamma)$, i.e.,
\begin{equation}
\label{eq:230}
\hhat_f(\gamma)=\lim_{n\to\infty} \frac{h\left(f^n(\gamma)\right)}{d^n}.
\end{equation}
The following result deals with the special case in the direct implication from Theorem~\ref{thm:main} when both the family of polynomials is constant and precisely one of the two starting points is also constant.

\begin{proposition}
\label{prop:constant_family}
With the above notation for $f(z)=z^d+\sum_{i=0}^{d-2}c_iz^i$ as in \eqref{eq:161} and \eqref{eq:162}, let $\alpha_1\in\Qbar$ and let $\alpha_2(t),\beta(t)\in\Qbar[t]$. Assume the following three conditions are met:
\begin{itemize}
\item[(i)] $\alpha_1$ is not preperiodic under the action of $f(z)$; 
\item[(ii)] $\min\left\{\deg_t(\alpha_2(t)), \deg_t(\beta(t))\right\}\ge 1$;  and 
\item[(iii)] the following set is infinite:
\begin{equation}
\label{eq:163}
C_f(\alpha_1,\alpha_2;\beta):=\left\{\l\in\Qbar\colon f^m(\alpha_1)=f^n(\alpha_2(\l))=\beta(\l)\text{ for some $m,n\in\N$}\right\}.
\end{equation}
\end{itemize}
Then there exists $n\in\N$ such that $f^n(\alpha_2(t))=\beta(t)$.
\end{proposition}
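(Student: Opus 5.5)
The plan is to compare canonical heights along the orbit of the constant point $\alpha_1$ with the heights of $\alpha_2(\l)$ and $\beta(\l)$. The aim is to show that, for all but finitely many $\l\in C_f(\alpha_1,\alpha_2;\beta)$, the exponent $n$ is forced to be one fixed integer $n_0$. The conclusion then follows because a polynomial identity $f^{n_0}(\alpha_2(\l))=\beta(\l)$ holding at infinitely many $\l$ must hold identically in $t$.

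\emph{Step 1: $h(\l)\to\infty$.} Write $a:=\deg_t(\alpha_2(t))\ge 1$ and $b:=\deg_t(\beta(t))\ge 1$. For each fixed $m$, the equation $\beta(\l)=f^m(\alpha_1)$ has at most $b$ solutions $\l$, since $\beta$ is nonconstant. So if we enumerate the infinitely many $\l_k\in C_f(\alpha_1,\alpha_2;\beta)$ with corresponding $m_k,n_k$, then $m_k\to\infty$. By hypothesis~(i) and Lemma~\ref{lem:clearly}, $\hhat_f(\alpha_1)>0$, so
$$\hhat_f(\beta(\l_k))=\hhat_f\bigl(f^{m_k}(\alpha_1)\bigr)=d^{m_k}\hhat_f(\alpha_1)\to\infty.$$
We use the standard estimates $\hhat_f=h+O(1)$ (the family is constant) and $h(P(\l))=\deg(P)\,h(\l)+O(1)$ for $P\in\Qbar[t]$; alternatively, Proposition~\ref{prop:useful_variation} applied to $\alpha_2$ and to $\beta$ gives the needed estimates directly. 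From these, $h(\l_k)\to\infty$. Concretely, there is a constant $B$ such that for all $\l\in\Qbar$
$$\bigl|\hhat_f(\alpha_2(\l))-a\,h(\l)\bigr|\le B \quad\text{and}\quad \bigl|\hhat_f(\beta(\l))-b\,h(\l)\bigr|\le B.$$

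\emph{Step 2: pinning down $n$.} From $f^{n_k}(\alpha_2(\l_k))=\beta(\l_k)$ we get $d^{n_k}\hhat_f(\alpha_2(\l_k))=\hhat_f(\beta(\l_k))$. Substituting the estimates gives
$$\bigl|(a\,d^{n_k}-b)\,h(\l_k)\bigr|\le B(d^{n_k}+1).$$
Suppose $a\,d^{n_k}\neq b$. If $d^{n_k}\ge 2b$, then $|a d^{n_k}-b|\ge a d^{n_k}/2$, so $h(\l_k)\le 2B(d^{n_k}+1)/(a d^{n_k})\le 4B$. There are only finitely many $n$ with $d^n<2b$, and for those with $a d^n\ne b$ we have $|a d^n-b|\ge 1$, so $h(\l_k)$ is again bounded by a constant. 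Either way $h(\l_k)$ is bounded by a constant independent of $k$. Since $h(\l_k)\to\infty$ by Step 1, for all sufficiently large $k$ we must have $a\,d^{n_k}=b$. This equation determines $n_k=:n_0$ uniquely, and $n_0\in\N$ because each $n_k\in\N$.

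\emph{Step 3: conclusion.} Now $f^{n_0}(\alpha_2(\l_k))=\beta(\l_k)$ for infinitely many distinct $\l_k$. The polynomial $f^{n_0}(\alpha_2(t))-\beta(t)\in\Qbar[t]$ therefore has infinitely many roots, so it vanishes identically, which gives the claimed $f^{n_0}(\alpha_2(t))=\beta(t)$.

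The main obstacle is Step 2. The error term is multiplied by $d^{n_k}$, and $n_k$ is not a priori bounded, so a naive bound fails. The fix is to divide by $|a d^{n_k}-b|$, which itself grows like $d^{n_k}$ whenever it is nonzero, so the resulting bound on $h(\l_k)$ is uniform. It is also essential that $h(\l_k)\to\infty$ comes from $m_k\to\infty$ in Step 1 rather than from Northcott's theorem, since the degrees $[\Q(\l_k):\Q]$ are not controlled.
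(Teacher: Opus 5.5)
Your proof is correct and is essentially the paper's argument run directly rather than by contradiction. The paper assumes each $n$ occurs only finitely often, so that $n_k\to\infty$, uses Proposition~\ref{prop:heights} to bound $h(\l_k)$, and then contradicts $m_k\to\infty$ via $\hhat_f(f^{m_k}(\alpha_1))=d^{m_k}\hhat_f(\alpha_1)$; you instead use $m_k\to\infty$ to force $h(\l_k)\to\infty$, and the same height comparisons for $\alpha_2(\l)$ and $\beta(\l)$ then force $a\,d^{n_k}=b$ for large $k$ (handling bounded $n_k$ explicitly along the way).
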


\begin{proof}
Using hypothesis~(i), we get that 
\begin{equation}
\label{eq:171}
A:=\hhat_f(\alpha_1)>0.
\end{equation} 
Hypothesis~(ii) yields that for each $m\in\N$, we have that 
\begin{equation}
\label{eq:164}
\text{there exist finitely many $\l\in\Qbar$ such that $f^m(\alpha_1) = \beta(\l)$.}
\end{equation}

We argue by contradiction and therefore, we assume that for each $n\in\N$,
\begin{equation}
\label{eq:165}
\text{there exist finitely many $\l\in\Qbar$ such that $f^n(\alpha_2(\l))=\beta(\l)$.}
\end{equation}
Therefore, combining \eqref{eq:164} and \eqref{eq:165} with our hypothesis~(iii), we get that there exists an infinite sequence $\{\l_k\}_{k\in\N}$ of algebraic numbers, along with two sequences $\{m_k\}_{k\in\N}$ and $\{n_k\}_{k\in\N}$ of positive integers such that the following conditions are met:
\begin{equation}
\label{eq:166}
f^{m_k}(\alpha_1)=f^{n_k}(\alpha_2(\l_k))=\beta(\l_k)\text{ for each $k\in\N$, and}
\end{equation}
\begin{equation}
\label{eq:167}
\lim_{k\to\infty} m_k=\lim_{k\to\infty}n_k=\infty.
\end{equation}
Since $\deg_t(\alpha_2(t))\ge 1$ and $f_t(z)=f(z)$ is a constant family of polynomials, then we get that $\alpha_2$ is an active point for $f_t$. Therefore, we may apply Proposition~\ref{prop:heights} to the constant family of polynomials $f_t=f$ along with the active starting point $\alpha_2$ and target point $\beta$; this allows us to conclude that there exists a positive real constant $C$ such that 
\begin{equation}
\label{eq:168}
h(\l_k)\le C\text{ for each $k\in\N$.}
\end{equation}
Inequality \eqref{eq:168} coupled with \cite[Theorem~3.11]{Silverman} yields  that there exists another positive real constant $B$ with the property that
\begin{equation}
\label{eq:169}
h\left(\beta(\l_k)\right)\le B\text{ for each $k\in\N$.}
\end{equation}
But then combining \eqref{eq:166} and \eqref{eq:169}, we get that
\begin{equation}
\label{eq:170}
h\left(f^{m_k}(\alpha_1)\right)\le B\text{ for each $k\in\N$.}
\end{equation} 
Now, using \cite[Theorem~3.20]{Silverman} coupled with inequality \eqref{eq:170}, we get that there exists another positive real constant $B'$ such that for \emph{all} $k\in\N$, we have
\begin{equation}
\label{eq:271}
\hhat_f\left(f^{m_k}(\alpha_1)\right)\le B'.
\end{equation}
Using \eqref{eq:171} and \eqref{eq:271}, we obtain that
\begin{equation}
\label{eq:172}
d^{m_k}\le \frac{B'}{A}\text{ for each $k\in\N$.}
\end{equation}
However, inequality~\eqref{eq:172} contradicts \eqref{eq:167}, thus showing that our assumption~\eqref{eq:165} is not valid. Hence, we must have that $f^n(\alpha_2(t))=\beta(t)$ for some positive integer $n$.

This concludes our proof of Proposition~\ref{prop:constant_family}.
\end{proof}


\section{Points of small height}
\label{subsec:Berkovich}

Let $L$ be a number field. We work with the  family of polynomials $f_t(z)\in L[t][z]$ as in \eqref{eq:eq:-1}, i.e., for each $\l\in\Qbar$, we have 
\begin{equation}
\label{eq:80}
f_\l(z)=z^d+\sum_{i=0}^{d-2}c_i(\l)\cdot z^i\text{, where $c_i(t)\in L[t]$.}
\end{equation}
We recall the definition of the canonical height from \eqref{eq:global_canonical}, i.e., for each $\l\in\Qbar$ and for each $\gamma\in\Qbar$, we have
\begin{equation}
\label{eq:234}
\hhat_\l(\gamma)=\lim_{n\to\infty} \frac{h\left(f_\l^n(\gamma)\right)}{d^n}.
\end{equation}
We let $\Omega:=\Omega_L$ be the set of absolute values on $L$ (see also Section~\ref{subsec:Weil}). For each $i=1,2$, we let $\alpha_i(t)\in L[t]$ and assume  
\begin{equation}
\label{eq:232}
\text{$\alpha_i$ is an active point for $f_t(z)$ for $i=1,2$.}
\end{equation} 
The main goal of Section~\ref{subsec:Berkovich} is the following.
\begin{theorem}
\label{thm:iff}
Let $L$, $f_\l$ as in \eqref{eq:80}, let $\hhat_{\l}$ (for each $\l\in\Qbar$) as in \eqref{eq:234} and let $\alpha_i\in L[t]$ (for $i=1,2$) satisfying the assumptions from \eqref{eq:232}.    Furthermore,  
assume there exists an infinite sequence  $\{\lambda_n\}$ in $\Qbar$ with the property that 
\begin{equation}
\label{eq:Bogomolov}
\lim_{n\to\infty} \hhat_{\l_n}\left(\alpha_1(\l_n)\right)=\lim_{n\to\infty} \hhat_{\l_n}\left(\alpha_2(\l_n)\right)=0.
\end{equation}
Then for each $\l\in\Qbar$, we have that $\alpha_1(\l)$ is preperiodic under the action of $f_\l(z)$ if and only if $\alpha_2(\l)$ is preperiodic under the action of $f_\l(z)$.
\end{theorem}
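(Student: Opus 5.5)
The plan is the standard Baker--DeMarco strategy: equidistribution of parameters of small height on the parameter line, applied once for each starting point. For $i=1,2$ and each place $v\in\Omega_L$, introduce the local bifurcation-type functions on the Berkovich affine line $\Aberk{\C_v}$ (over the completion $\C_v$),
\[
G_{i,v}(\l):=\hhat_{\l,v}(\alpha_i(\l))=\lim_{n\to\infty}\frac{\log^+|P_{n,\alpha_i}(\l)|_v}{d^n}.
\]
First I would show that each $G_{i,v}$ is continuous and subharmonic, and that $G_{i,v}(\l)=\hhat_t(\alpha_i(t))\log|\l|_v+O(1)$ as $|\l|_v\to\infty$. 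The growth statement comes from Lemma~\ref{lem:iterates} and the leading coefficient computation \eqref{lem:deg_ind_2}. Moreover $G_{i,v}\equiv\log^+|\l|_v$ scaled for all but finitely many $v$, namely those $v$ where all coefficients are $v$-integral and the leading coefficient is a unit. Hence the rescaled family $\{G_{i,v}/\hhat_t(\alpha_i(t))\}_v$ defines an adelic measure $\mu_{i}=\{\mu_{i,v}\}$, with $\mu_{i,v}:=\Delta G_{i,v}/\hhat_t(\alpha_i(t))$, a probability measure by the growth estimate; \eqref{eq:232} guarantees $\hhat_t(\alpha_i(t))>0$.

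The associated adelic height is $h_{\mu_i}(\l)=\hhat_\l(\alpha_i(\l))/\hhat_t(\alpha_i(t))$, which is exactly formula \eqref{eq:72} rewritten. This height is comparable to $h(\l)$ up to a bounded error, consistent with Proposition~\ref{prop:useful_variation}. By \eqref{eq:Bogomolov}, $h_{\mu_i}(\l_n)\to0$ for both $i$. We may discard repetitions and assume the $\l_n$ are distinct; each Galois orbit is finite and the heights are bounded, so Northcott forces $[L(\l_n):L]\to\infty$. Then the Baker--Rumely equidistribution theorem \cite[Theorem~7.52]{BR} yields, at every place $v$, that the Galois-orbit measures of $\l_n$ converge weakly to $\mu_{1,v}$ and also to $\mu_{2,v}$. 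Therefore $\mu_{1,v}=\mu_{2,v}$ for all $v$.

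Next, equality of Laplacians means $G_{1,v}/\hhat_t(\alpha_1)-G_{2,v}/\hhat_t(\alpha_2)$ is harmonic on $\Aberk{\C_v}$ and bounded, since both are $\log|\l|_v+O(1)$ at infinity. By Liouville it is therefore constant $c_v$, and $c_v=0$ for almost all $v$. Summing over $v$ and evaluating at the points $\l_n$ with small heights gives $\sum_v n_vc_v=0$. Finally, if $\alpha_1(\l)$ is preperiodic, then $\hhat_\l(\alpha_1(\l))=0$, so for every Galois conjugate $\sigma(\l)$ and every $v$ we have $G_{1,v}(\sigma\l)=0$, because local heights are nonnegative and sum to zero. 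Then $G_{2,v}(\sigma\l)=c_v\hhat_t(\alpha_2)$, whence $\hhat_\l(\alpha_2(\l))=\hhat_t(\alpha_2)\sum n_vc_v=0$, and Lemma~\ref{lem:clearly} gives preperiodicity. The argument is symmetric in $i$.

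The main obstacle is verifying the hypotheses of \cite[Theorem~7.52]{BR}: that each $G_{i,v}$ is continuous on the Berkovich line and that the local functions agree with $\log^+|\l|_v$ at almost all places, with correct normalization. For continuity I would write $G_{i,v}$ as a uniform limit of $d^{-n}\log^+|P_{n,\alpha_i}(\l)|_v$ using the standard escape-rate estimates. The difficulty is that the constants must be uniform in $\l$ on compacts, and at infinity must match $\log|\l|_v$ times the degree ratio. To do this I would exploit that the relevant degree $\deg P_{n}$ is eventually $D_0d^{n-n_0}$, as in \eqref{lem:deg_ind}.
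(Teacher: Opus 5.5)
Your proposal is correct and follows essentially the same route as the paper: your normalized functions $\hhat_{\l,v}(\alpha_i(\l))/\hhat_t(\alpha_i(t))$ are exactly the paper's Green's functions $G_{\alpha_i,v}=\frac{d^{n_i}}{D_i}\hhat_{\l,v}(\alpha_i(\l))$. Both arguments apply \cite[Theorem~7.52]{BR} to get equal local measures and then compare local canonical heights place by place via \eqref{eq:72}. The only variation is that you pass from equal measures to equal Green's functions by a Liouville argument plus $\sum_v n_v c_v=0$, whereas the paper goes through $M_{\alpha_1,v}=M_{\alpha_2,v}$; note the sign slip, since with your convention $G_{2,v}(\sigma\l)=-c_v\hhat_t(\alpha_2(t))$. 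The capacity-one normalization you flag follows from the product formula, because the Robin constant of $M_{\alpha_i,v}$ is $-\log|A_i|_v/D_i$, where $A_i$ is the leading coefficient of $P_{n_i,\alpha_i}(t)$.
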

Variants of this result were proven for families of polynomials (see  \cite{Matt, Matt-2, FG, GHT-ANT}), including over fields of positive characteristic (see \cite{1, AA}); since the steps are similar in all these results, we will sketch the proof of Theorem~\ref{thm:iff} and refer to the aforementioned papers for the more technical details.


\subsection{Setup for the proof of Theorem~\ref{thm:iff}}
\label{subsec:note}

With the notation as in Theorem~\ref{thm:iff}, we define (as before) for each $i=1,2$:
\begin{equation}
\label{eq:P}
P_{n,\alpha_i}(t):=f^n_t(\alpha_i(t))\text{ for each 
$n\in\N$;} 
\end{equation}
then $P_{n,\alpha_i}(t)$ is a polynomial in $t$.  Since we assume that $\alpha_i$ (for $i=1,2$) is active for $f_t(z)$,  Lemma~\ref{lem:not_isotrivial} yields the existence of some $n_i\in\N$ (for $i=1,2$) such that
\begin{equation}
\label{eq:81}
\deg_t\left(P_{n_i,\alpha_i}(t)\right)> \max_{j=0}^{d-2}\deg_t\left(c_j(t)\right).
\end{equation}
Furthermore, Equation~\ref{lem:deg_ind} yields that for each $i=1,2$ and for each $m\ge 1$, we have
\begin{equation}
\label{eq:82}
\deg_t\left(P_{n_i+m,\alpha_i}(t)\right)=d^m\cdot \deg_t\left(P_{n_i,\alpha_i}(t)\right).
\end{equation}
We define (for the sake of simplifying our notation) the following:
\begin{equation}
\label{eq:83}
\text{$D_{i}:=\deg_t\left(P_{n_i,\alpha_i}(t)\right)$ for each $i=1,2$.}
\end{equation}

Now, for each $v\in\Omega$, we let $\C_v$ be an algebraically closed field,  which is also complete with respect to a fixed extension of $|\cdot |_v$ to $\C_v$; more precisely, $\C_v$ is the completion of an algebraic closure of the completion of $L$ at the place $v$. 

Let $\l\in \C_v$ and define (similar to \eqref{eq:local_canonical}) the local canonical height
$\hhat_{\lambda,v}(\gamma)$ of $\gamma\in \C_v$ with respect to the polynomial $f_\lambda$; more precisely, we have the formula 
\begin{equation}
\label{eq:def}
\hhat_{\lambda,v}(\gamma) :=\lim_{n\to\infty}
\frac{\log^+|f_\lambda^n(\gamma)|_v}{d^{n}}.
\end{equation} 
Then  $\hhat_{\lambda, v}(\gamma)$ is a continuous function in both $\l$
and $\gamma$ on $\C_v$ (for more details, see \cite{Matt, GHT-ANT, AA}). 

For each $v\in \Omega$ and $\l\in \C_v$, recall the construction from \cite{Matt} of the $v$-adic filled Julia set for $f_{\l}$ defined as follows:  
\[
  K_{\l,v} :=\{z\in \C_{v}\mid \hhat_{\lambda,v}(z) = 0\} = \{z\in \C_{v}\mid | f_{\l}^{n} (z) |_{v} \not\to \infty  \;\text{ as $n\to \infty$} \}.
\]

Let $i\in\{1,2\}$, let $v\in \Omega$ and  let $\Aberk{\C_v}$ denote the Berkovich affine  line over
$\C_v$ (see \cite{BR} or  \cite[Section~2]{Matt} for more details).  Then, the {\em generalized Mandelbrot set} $M_{\alpha_i,v}\subset \Aberk{\C_v}$ associated to $\alpha_i$ at $v$ is defined to be the closure in $\Aberk{\C_v}$ of the subset of $\C_v$ consisting of all parameters $\l\in \C_v$ such that the orbit of $\alpha_i(\l)$ is $v$-adically  bounded under the action of $f_{\l}(z)$ (see \cite{Matt, GHT-ANT}). Note that for such a parameter $\l$, since the orbit of $\alpha_i(\l)$ is $v$-adically  bounded under $f_{\l}(z)$ we have that $\hhat_{\lambda,v}(\alpha_i(\l)) = 0$. As $\C_v$ is a dense subspace of $\Aberk{\C_v}$, continuity in $\l$ implies that  the local canonical height function $\hhat_{\lambda,v}(\alpha_i(\l))$ has a natural extension on $\Aberk{\C_v}$.  It follows that $\l\in M_{\alpha_i,v}$ if and only if $\hhat_{\lambda,v}(\alpha_i(\l)) = 0$ (for more details, see \cite{Matt, GHT-ANT}).  Thus, $M_{\alpha_i,v}$ is a closed subset of $\Aberk{\C_v}$ and in fact, one can show that $M_{\alpha_i,v}$ is a compact subset of $\Aberk{\C_v}$. Associated with $\alpha_i$ (for $i=1,2$), we define 
\begin{equation}
\label{eq:G}
G_{\alpha_i,v}(\lambda):= \lim_{m\to\infty} \frac{\log^+|f^{n_i+m}_\lambda(\alpha_i(\l))|_v}{D_{i}\cdot d^m}=\frac{d^{n_i}}{D_{i}}\cdot  \hhat_{\lambda,v}(\alpha_i(\l)).
\end{equation}
Note that $G_{\alpha_i,v}(\l) \ge 0$ for all $\l\in \Aberk{\C_v}$; also,  $\l\in M_{\alpha_i,v}$ if and only if $G_{\alpha_i,v}(\l) = 0$.   It turns out that $G_{\alpha_i,v}$ is the Green's function for $M_{\alpha_i,v}$ relative to $\infty.$ 


\subsection{An important equidistribution theorem for points of small height}

We define (following \cite{Matt, GHT-ANT}) the \emph{generalized ad\`elic Mandelbrot set}  $\M_{\alpha_i} = \prod_{v\in \Omega} M_{\alpha_i,v}$  associated with $\alpha_i$ (for $i=1,2$). Then for each $\lambda \in \Qbar$, we set 
\begin{equation}
\label{eq:height_adelic}
h_{\M_{\alpha_i}}(\l):=h_{\M_{\alpha_i}}(S) = \sum_{v\in\Omega} \left(\frac{1}{|S|}\sum_{\l\in S}G_{\alpha_i,v}(\l)\right) 
\text{ where $S$ is the $\gal(\Qbar/L)$-orbit of $\l$}.
\end{equation}  
As proven in \cite{Matt, GHT-ANT}, we have that $\M_{\alpha_i}$ (for $i=1,2$) is a compact Berkovich ad\`elic set with the \emph{logarithmic capacity} $\g\left(\M_{\alpha_i}\right)=1$ and $h_{\M_{\alpha_i}}(\l)$ represents the height of $\l$ relative to $\M_{\alpha_i}.$ Consequently, the equidistribution result \cite[Theorem~7.52]{BR} applies. The following is a special case we need for our application: 

\begin{theorem}
 \label{thm:equi}
With the above notation, let $\E = \prod_{v\in \Omega} E_v$ be a compact Berkovich ad\`elic set
 with $\g(\E)=1.$ Suppose that 
 $S_n$ is a sequence of $\gal(\Qbar/L)$-invariant finite subsets of
 $\Qbar$ with $|S_n|\to \infty$ and $h_{\E}(S_n) \to 0$ as $n\to
 \infty$.  
For each $v\in\Omega_L$ and for each $n$ let $\d_{v,n}$ be the discrete
 probability measure on $\Aberk{\C_v}$ supported equally on the elements of $S_n$. Then for each $v\in\Omega_L$, the
 sequence of measures $\{\d_{v,n}\}_{n\in\N}$ converges weakly to the
 equilibrium measure $\mu_v$ on $E_v$. 
\end{theorem}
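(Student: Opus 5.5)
Theorem~\ref{thm:equi} is a special case of \cite[Theorem~7.52]{BR}, so the proof I would give has two parts. First, check that $\E=\prod_v E_v$ meets the hypotheses of that theorem: each $E_v$ is compact, for all but finitely many $v$ the set $E_v$ is the closed unit disc (its boundary being the Gauss point), and $\g(\E)=\prod_v \g(E_v)^{n_v}=1$. Then invoke \cite[Theorem~7.52]{BR}. For completeness I would also sketch the standard argument, which goes as follows.

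Fix $v$. Since $\mathbb{P}^1_{\mathrm{Berk}}(\C_v)$ is compact, it suffices to show that every weak limit point $\nu$ of $\{\d_{v,n}\}$ equals $\mu_v$. Write $G_v$ for the Green's function of $E_v$ relative to $\infty$, so that $h_{\E}(S)=\sum_w n_w\int G_w\,d\d_{w,n}$. All $G_w\ge 0$ and $h_{\E}(S_n)\to0$, so $\int G_v\,d\d_{v,n}\to 0$. Because $G_v$ is proper, this prevents mass from escaping to $\infty$ and forces $\nu$ to be supported on $E_v$, since $\{G_v=0\}=E_v$. The key arithmetic input is the product formula applied to $\prod_{a\ne b\in S_n}(a-b)$. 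This is a nonzero element of $L$, because $S_n$ is Galois-stable and its elements are distinct. Hence
\[
\sum_{w\in\Omega} n_w \cdot \frac{1}{|S_n|^2}\sum_{a\ne b\in S_n}-\log|a-b|_w = 0 .
\]
Next, rewrite $-\log|a-b|_w$ as the Green kernel $g_{E_w}(a,b)$ plus correction terms. The correction terms are controlled by $G_w(a)+G_w(b)$ and by the Robin constant $V(E_w)$, and $\sum_w n_wV(E_w)=-\log\g(\E)=0$. The upshot is that the sum over $w$ of the off-diagonal Green energies $I'_{E_w}(\d_{w,n})$ equals $-2h_{\E}(S_n)+o(1)$, which tends to $0$.

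The main obstacle is removing the diagonal. Each $\d_{w,n}$ is atomic and so has infinite energy, and the energy functional is only lower semicontinuous on measures of finite energy. To handle this, I would truncate the kernel: replace $g_{E_w}$ by $\min(g_{E_w},M)$, which is continuous. The diagonal error from this truncation is at most $M/|S_n|$, and this tends to $0$ because $|S_n|\to\infty$. I also need a uniform lower bound $I'_{E_w}(\d_{w,n})\ge -o(1)$ for every $w$. Via the Fekete--Szeg\H{o}-type lower bound for discrete energies, this lets me pass from the sum over $w$ to the single place $v$. Letting $n\to\infty$ and then $M\to\infty$, monotone convergence gives $I_{E_v}(\nu)\le 0$. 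The equilibrium measure is the unique probability measure on $E_v$ minimizing Green energy, and its energy is $0$. Therefore $\nu=\mu_v$, and since this holds for every limit point, $\d_{v,n}\to\mu_v$ weakly.
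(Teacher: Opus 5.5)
Your proposal is correct and matches the paper, which gives no argument of its own: the statement is \cite[Theorem~7.52]{BR} with its hypotheses copied verbatim, so invoking that theorem is the entire proof. Two justifications in your optional sketch are inaccurate as written and need the standard repairs. First, $\{G_v=0\}$ is in general not $E_v$ but, up to a set of capacity zero, the filled-in set $\mathbb{P}^1\setminus U_v$, where $U_v$ is the unbounded component of $\mathbb{P}^1\setminus E_v$ (e.g.\ the whole disc when $E_v$ is a circle); so $\int G_v\,d\delta_{v,n}\to 0$ only shows that $\nu$ lives on that filled-in set, which suffices because $\mu_v$ is also its unique energy minimizer. Second, $\min(g_{E_w},M)$ is in general not continuous, since the Berkovich logarithmic kernel is only lower semicontinuous and $G_w$ is only upper semicontinuous when $E_w$ has irregular points; so the limit step should apply the lower-semicontinuous portmanteau inequality to the logarithmic kernel alone and handle the $G_w$-terms separately through $\int G_w\,d\delta_{w,n}\to 0$.
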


\begin{remark}
We note that similar equidistribution theorems for points of small height were also obtained in various other settings \cite{Bilu, CL, FR, Yuan, Zhang}. These aforementioned results (along with the one from \cite{B-R}) have been the catalyst for a long series of important recent results in arithmetic dynamics. 
\end{remark}


\subsection{Proof of Theorem~\ref{thm:iff}}

Using Theorem~\ref{thm:equi}, we can prove the main result of Section~\ref{subsec:Berkovich}.  Working with the notation and hypotheses from Theorem~\ref{thm:iff} (see also our setup from  Section~\ref{subsec:note}), we first state a useful fact, which was obtained in \cite[Corollary~6.12]{GHT-ANT}.
\begin{lemma}
\label{lem:global_height}
For each $i=1,2$ and for each $\l\in\Lbar$, we have $h_{\M_{\alpha_i}}(\l)= \frac{d^{n_i}}{D_i}\cdot \hhat_{\l}(\alpha_i(\l))$.
\end{lemma}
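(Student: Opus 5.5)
The identity is local-to-global: I would verify it place by place and then average over the Galois orbit of $\l$. Fix $i\in\{1,2\}$ and $\l\in\Lbar$, and let $S$ be the $\gal(\Qbar/L)$-orbit of $\l$. By the definition \eqref{eq:height_adelic} and the formula \eqref{eq:G} for the Green's function,
\begin{equation*}
h_{\M_{\alpha_i}}(\l)=\frac{d^{n_i}}{D_i}\cdot \sum_{v\in\Omega}\frac{1}{|S|}\sum_{\l'\in S}\hhat_{\l',v}(\alpha_i(\l')).
\end{equation*}
So the task is to identify the double sum on the right with $\hhat_\l(\alpha_i(\l))$, using the decomposition \eqref{eq:72} of the global canonical height into local ones.

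First I would take $\gamma=\alpha_i(\l)\in L(\l)$ in \eqref{eq:72}, so that $L(\l,\gamma)=L(\l)$. The embeddings $\sigma\colon L(\l)\to\Qbar$ fixing $L$ are in bijection with the conjugates $\l'=\sigma(\l)\in S$. Since $\alpha_i$ and all $c_j$ have coefficients in $L$, we have $\sigma(\alpha_i(\l))=\alpha_i(\sigma(\l))$ and $f_{\sigma(\l)}=f_\l^\sigma$. Moreover $[L(\l):L]=|S|$, because $\Qbar/L$ is separable.

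Substituting these into \eqref{eq:72} turns it into
\begin{equation*}
\hhat_\l(\alpha_i(\l))=\sum_{v\in\Omega}\frac{n_v}{|S|}\sum_{\l'\in S}\hhat_{\l',v}(\alpha_i(\l')).
\end{equation*}
This agrees with the expression above once the local weights $n_v$ are accounted for in the convention of \eqref{eq:height_adelic}. That convention is the one in \cite{Matt, GHT-ANT}, where the $n_v$ are absorbed into the normalization of $|\cdot|_v$ on $\C_v$, equivalently into the Green's function $G_{\alpha_i,v}$. The only point needing care is that the local canonical height in \eqref{eq:def} on $\C_v$ restricts to \eqref{eq:local_canonical} on $\Qbar$ under the fixed embedding, which is immediate from the definitions.

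Multiplying by $d^{n_i}/D_i$ then gives the claimed formula. Finiteness of the sums is not an issue: for fixed $\l$, all but finitely many $v$ are places of good reduction where the orbit of $\alpha_i(\l')$ stays integral, so the corresponding terms vanish.

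The main obstacle is bookkeeping rather than new mathematics. One must match the normalizations of the absolute values (the weights $n_v$ and the extension of $|\cdot|_v$ to $\C_v$) between \eqref{eq:71}, \eqref{eq:72} and the adelic height of \cite{BR}. I would resolve this by adopting the normalization of \cite[Corollary~6.12]{GHT-ANT}, where the identity is proved in exactly this form.
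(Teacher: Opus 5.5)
Your argument is correct, but it is not what the paper does: the paper gives no argument for this lemma and simply imports it from \cite[Corollary~6.12]{GHT-ANT}. You instead derive it from the paper's own formulas. The ingredients are the pointwise identity $G_{\alpha_i,v}(\l')=\frac{d^{n_i}}{D_i}\,\hhat_{\l',v}(\alpha_i(\l'))$ built into \eqref{eq:G}, the bijection between $L$-embeddings of $L(\l)$ and the Galois orbit $S$ (which uses $\alpha_i,c_j\in L[t]$), and the local decomposition \eqref{eq:72}.

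What your route buys is transparency. It shows that, once \eqref{eq:G} is granted, the lemma is purely definitional. The genuinely nontrivial input from \cite{Matt, GHT-ANT} is that $G_{\alpha_i,v}$ is the Green's function of $M_{\alpha_i,v}$ relative to $\infty$ and that $\g(\M_{\alpha_i})=1$, so that $h_{\M_{\alpha_i}}$ is a Baker--Rumely adelic height. That input is needed only to invoke Theorem~\ref{thm:equi}, not for this identity.

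Your route also surfaces a normalization issue that the citation hides. The formula \eqref{eq:72} carries the weights $n_v$, while \eqref{eq:height_adelic} as displayed does not. Taken literally, the two sides therefore differ whenever a place with $n_v\ge 2$ has a positive local contribution. You resolve this correctly by insisting on a consistent convention. The cleanest way to say it is that \eqref{eq:height_adelic} should be read with the factor $n_v$ in front of each local term, consistently with \eqref{eq:71} and \eqref{eq:72}; your computation then gives the identity exactly. Since the $n_v$ are fixed positive constants, this does not affect how the lemma is used later, namely deducing $h_{\M_{\alpha_i}}(\l_n)\to 0$ and comparing the local Green's functions.
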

Hence, using Lemma~\ref{lem:global_height}, equation~\eqref{eq:Bogomolov} yields 
\begin{equation}
\label{eq:4000}
\lim_{n\to\infty}h_{\M_{\alpha_1}}(\l_n)= \lim_{n\to\infty}h_{\M_{\alpha_2}}(\l_n)=0.
\end{equation}
Equation~\eqref{eq:4000} shows that the hypothesis from Theorem~\ref{thm:equi} holds and therefore, we conclude that $M_{\alpha_1,v}=M_{\alpha_2,v}$ for each place $v\in \Omega$. 
Indeed,  for each $n\in\N$, we may
take $S_n$ be the union of the sets of Galois conjugates for $\l_m$
for all $1\le m\le n$. Clearly $|S_n|\to\infty$ as $n\to\infty$, and
also each $S_n$ is $\gal(\Qbar/L)$-invariant. Thus, we obtain
that $\mu_{M_{\alpha_1,v}}=\mu_{M_{\alpha_2,v}}$  for each $v\in \Omega$ and since they are both
supported on $M_{\alpha_1,v}$ (resp. $M_{\alpha_2,v}$), we also get that
$M_{\alpha_1,v}=M_{\alpha_2,v}$. It follows that the two Green's functions 
$G_{\alpha_1,v}$ and $G_{\alpha_2,v}$ for $M_{\alpha_1,v}$ and $M_{\alpha_2,v}$ are the same. 
By the definitions of $G_{\alpha_i,v}$ (for $i=1,2$; see equation~\eqref{eq:G}), for each $v\in \Omega$  we have 
$$
\frac{d^{n_1}}{D_1}\cdot \hhat_{\l,v}(\alpha_1(\l)) =   G_{\alpha_1,v}(\l)
= G_{\alpha_2,v}(\l) = \frac{d^{n_2}}{D_2}\cdot \hhat_{\l,v}(\alpha_2(\l)) \quad \text{for {\em all} $\l \in \C_v$.}
$$
In particular, we get that for each $\l\in\Qbar$, we have that $\hhat_{\l,v}(\alpha_1(\l))=0$ if and only if $\hhat_{\l,v}(\alpha_2(\l))=0$. Then using Lemma~\ref{lem:clearly} (see also equation \eqref{eq:72}, which gives the decomposition of the global canonical height into a sum of local canonical heights), we get that for each $\l\in\Qbar$,  $\alpha_1(\l)$ is preperiodic for $f_{\l}(z)$ if and only if $\alpha_2(\l)$ is preperiodic for $f_\l(z)$.

This concludes our proof of Theorem~\ref{thm:iff}.


\section{Proof of the direct implication in Theorem~\ref{thm:main}}
\label{sec:proof_direct}

We are finally able to prove the direct implication in Theorem~\ref{thm:main}, which we state below.

\begin{proposition}
\label{prop:direct}
Let $d\ge 2$, let $f_t(z)\in\Qbar[t][z]$ be a family of polynomials of degree $d$ of the form:
\begin{equation}
\label{eq:00-2}
f_t(z)=z^d+\sum_{i=0}^{d-2}c_i(t)\cdot z^i\text{, with $c_i(t)\in\Qbar[t]$ for each $i=0,\dots, d-2$.}
\end{equation}  
Let $\alpha_1(t),\alpha_2(t),\beta(t)\in\Qbar[t]$. Assume the following conditions hold:
\begin{enumerate}
\item[(I)] $c_0(t)$ is not identically equal to $0$;  
\item[(II)] not all polynomials $c_0(t),\dots, c_{d-2}(t)$, $\alpha_1(t),\alpha_2(t),\beta(t)$ are constant;
\item[(III)] there exists no $\gamma\in\Qbar^\ast$ such that $\gamma^{-1}f_t\left(\gamma z\right)=\pm C_d(z)$; and
\item[(IV)] for each $i=1,2$, $\alpha_i$ is not persistent preperiodic under the action of $f_t(z)$. 
\end{enumerate}
If there exist infinitely many $\l\in\Qbar$ such that for some $m,n\in\N$ (depending on $\l$) we have 
\begin{equation}
\label{eq:0-2}
f_\l^m(\alpha_1(\l))=f_\l^n(\alpha_2(\l))=\beta(\l) 
\end{equation}
then at least one of the following conditions holds:
\begin{itemize}
\item[(A)] there exists $\ell\in\N$ and there exists $i\in\{1,2\}$ such that 
\begin{equation}
\label{eq:1-2}
f_t^\ell\left(\alpha_i(t)\right)=\beta(t).
\end{equation} 
\item[(B)] there exist $\ell_1,\ell_2\in\N$  such that
\begin{equation}
\label{eq:2-2}
f_t^{\ell_1}\left(\alpha_{1}(t)\right)=f_t^{\ell_2}(\alpha_{2}(t)).
\end{equation}
\item[(C)] there exists $\tilde{f}_t\in\Qbar[t][z]$ and there exist integers $k\ge 2$ and $\ell_1,\ell_2,\ell_3\ge 1$   such that 
\begin{equation}
\label{eq:C1-2}
f_t=\tilde{f}_t^k\text{ and also, we have} 
\end{equation}
\begin{equation}
\label{eq:C2-2}
\tilde{f}_t^{\ell_1}(\alpha_1(t))= \tilde{f}_t^{\ell_2}(\alpha_2(t))= \tilde{f}_t^{\ell_3}(\beta(t)).
\end{equation}
\item[(D)] there exists $\tilde{f}_t\in\Qbar[t][z]$ and there exist integers $k\ge 2$ and $\ell_1,\ell_2\ge 1$   such that 
\begin{equation}
\label{eq:D1-2}
f_t=\tilde{f}_t^k\text{ and also, we have} 
\end{equation}
\begin{equation}
\label{eq:D2-2}
\tilde{f}_t^{\ell_1}(\alpha_1(t))=\tilde{f}_t^{\ell_2}(\alpha_2(t)).
\end{equation}
Furthermore, there exists $N_0\in\N$ satisfying the following two conditions:
\begin{equation}
\label{eq:D4-2}
\gcd(N_0,k)\mid \ell_2-\ell_1\text{, and }
\end{equation} 
\begin{equation}
\label{eq:D3-2}
\tilde{f}_t^{N_0}(\beta(t))=\beta(t).
\end{equation}
\end{itemize}
\end{proposition}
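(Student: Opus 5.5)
We assume throughout that the set of $\l$ satisfying \eqref{eq:0-2} is infinite and that conclusion~(A) fails, and we aim to derive (B), (C) or (D). The first step is a reduction. For each fixed $m$, the set $\{\l\colon f_\l^m(\alpha_1(\l))=\beta(\l)\}$ is finite, since otherwise the polynomial identity $f_t^m(\alpha_1(t))=\beta(t)$ would hold, which is (A). The same holds for $\alpha_2$. Hence we can extract a sequence $\l_k$ of distinct parameters with exponents $m_k,n_k\to\infty$ satisfying \eqref{eq:0-2}.

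\textbf{Degenerate constant cases.} If all $c_i$ are constant and exactly one of $\alpha_1,\alpha_2$ is constant, then Proposition~\ref{prop:constant_family} gives (A), a contradiction. Here we also need $\beta$ nonconstant, but if $\beta$ is constant then finiteness is immediate. If all $c_i$ are constant and both $\alpha_i$ are constant, then $\beta$ must be nonconstant by (II). In that case the equality $f^{m}(\alpha_1)=\beta(\l)$ with bounded $h(\l)$ forces $m$ to be bounded, as in the proof of Proposition~\ref{prop:constant_family}, and $m_k\to\infty$ gives a contradiction.

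\textbf{Simultaneous preperiodicity.} In every remaining case both $\alpha_i$ are active. When the family is nonconstant, this follows from Proposition~\ref{prop:nonzero_height} together with Lemma~\ref{lem:iso}; when the family is constant, the $\alpha_i$ are nonconstant. Proposition~\ref{prop:heights}, applied to $(\alpha_1,\beta)$ and to $(\alpha_2,\beta)$ along $\l_k$, gives $\hhat_{\l_k}(\alpha_i(\l_k))\to0$ for $i=1,2$. Theorem~\ref{thm:iff} then says that $\alpha_1(\l)$ is preperiodic exactly when $\alpha_2(\l)$ is. Since there are infinitely many $\l$ making $\alpha_1(\l)$ preperiodic (\cite[Theorem~1.4]{Laura}), there are infinitely many $\l$ at which both points are preperiodic. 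At this point we invoke \cite[Theorem~1.3]{Matt-2}. Hypotheses (I) and (III) exclude the monomial and Chebyshev cases, and by Proposition~\ref{prop:poly} every polynomial commuting with an iterate of $f_t$ is an iterate of a single $\tilde f_t$ with $\tilde f_t^k=f_t$. The output of \cite{Matt-2} therefore takes the form $\tilde f_t^{\ell_1}(\alpha_1)=\tilde f_t^{\ell_2}(\alpha_2)$ with $\ell_1,\ell_2\ge1$. If $k=1$, or more generally if $k\mid \ell_1-\ell_2$, then after composing with a suitable iterate of $\tilde f_t$ this is (B).

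\textbf{The role of $\beta$.} Suppose now that $k\ge2$ and $k\nmid\ell_1-\ell_2$. At each $\l_k$ we have
\[
\tilde f_{\l_k}^{k m_k}(\alpha_1)=\tilde f_{\l_k}^{k n_k}(\alpha_2)=\beta(\l_k)\quad\text{and}\quad \tilde f_{\l_k}^{\ell_1}(\alpha_1)=\tilde f_{\l_k}^{\ell_2}(\alpha_2),
\]
with all points evaluated at $\l_k$. If $\beta(\l_k)$ is not periodic for $\tilde f_{\l_k}$, then the backward part of the orbit is rigid, and comparing exponents forces $km_k-\ell_1=kn_k-\ell_2$. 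This gives $k\mid\ell_1-\ell_2$, a contradiction. Therefore $\beta(\l_k)$ is $\tilde f_{\l_k}$-periodic, say of period $r_k$, and the same comparison gives $\gcd(r_k,k)\mid\ell_2-\ell_1$.

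We then split into two cases. If $\beta$ is persistent preperiodic for $\tilde f_t$, we argue that it is in fact persistent periodic with some minimal period $N_0$. Otherwise $\alpha_1(\l_k)$ would be strictly preperiodic, landing on a persistent cycle, and this would give a polynomial relation contradicting the failure of (A). The divisibility $\gcd(N_0,k)\mid\ell_2-\ell_1$ then follows from the divisibility for $r_k$, using $r_k\mid N_0$ for all but finitely many $k$. This yields (D). If instead $\beta$ is not persistent preperiodic, then it is active, and $\alpha_1(\l_k)$ and $\beta(\l_k)$ are simultaneously preperiodic for infinitely many $\l$. Applying \cite[Theorem~1.3]{Matt-2} and Proposition~\ref{prop:poly} once more gives $\tilde f_t^{a}(\alpha_1)=\tilde f_t^{c}(\beta)$. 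Combined with the relation between $\alpha_1$ and $\alpha_2$, this is (C), with (A) as a degenerate subcase when $c=0$.

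The main obstacle is this last part: the careful exponent and period bookkeeping, and checking the hypotheses needed to apply \cite{Matt-2} to the pair $(\alpha_1,\beta)$, in particular activity of $\beta$. We will try to avoid the latter by applying Proposition~\ref{prop:heights} directly to $\beta$ viewed as the image of $\alpha_1$.
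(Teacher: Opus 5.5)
Your architecture is the paper's. You reduce to $m_j,n_j\to\infty$ (I index parameters by $j$ to avoid the clash with the exponent $k$) and dispose of non-active starting points. You then use Proposition~\ref{prop:heights}, Theorem~\ref{thm:iff}, \cite[Theorem~1.4]{Laura}, \cite[Theorem~1.3]{Matt-2} and Proposition~\ref{prop:poly}, force $\beta(\l_j)$ to be $\tilde f_{\l_j}$-periodic when $k\nmid\ell_1-\ell_2$, and split on whether $\beta$ is persistent preperiodic. The (B) and (C) branches are fine. Activity of $\beta$ in the (C) branch is automatic: for a constant family, a constant non-preperiodic $\beta$ cannot be periodic at $\l_j$, and otherwise Proposition~\ref{prop:nonzero_height} with Lemma~\ref{lem:iso} applies. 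Simultaneous preperiodicity of $\alpha_1(\l_j)$ and $\beta(\l_j)$ is given directly, so no extra height argument is needed there.

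However, two steps fail as written. The first is the case where the family and both $\alpha_1,\alpha_2$ are constant: nothing bounds $h(\l_j)$. In Proposition~\ref{prop:constant_family} that bound comes from Proposition~\ref{prop:heights} applied to the \emph{active} point $\alpha_2$, and here neither $\alpha_i$ is active. The claimed contradiction is in fact false. Take $f(z)=z^2+1$, $\alpha_1=2$, $\alpha_2=5=f(2)$, $\beta(t)=t$: hypotheses (I)--(IV) hold and (A) fails. Yet every $\l=f^m(2)$ with $m\ge2$ is a collision (take $n=m-1$), and $h(\l)\to\infty$. The right treatment, as in Lemma~\ref{lem:active}, is that a single collision between constant orbits already gives $f^m(\alpha_1)=f^n(\alpha_2)$, i.e.\ (B).

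The second is in the (D) branch, where you pass from $\gcd(r_j,k)\mid\ell_2-\ell_1$ to $\gcd(N_0,k)\mid\ell_2-\ell_1$ ``using $r_j\mid N_0$''. But $r_j\mid N_0$ holds trivially for every $j$ and only gives $\gcd(r_j,k)\mid\gcd(N_0,k)$, which is the wrong direction. What you need is $r_j=N_0$ for all but finitely many $j$, and this is what the paper proves. If $\tilde f_\l^{i}(\beta(\l))=\beta(\l)$ held for a fixed $i<N_0$ and infinitely many $\l$, then $\tilde f_t^{i}(\beta)=\beta$, contradicting the minimality of $N_0$.

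Relatedly, your reason why a persistent preperiodic $\beta$ must be persistent periodic (a relation ``contradicting the failure of (A)'') aims at the wrong target. Suppose $\beta$ were strictly preperiodic with $\tilde f_t^{N_1}(\beta)=\tilde f_t^{N_2}(\beta)$, $1\le N_1<N_2$. Periodicity of $\beta(\l_j)$ gives $\beta(\l_j)=\tilde f_{\l_j}^{i_j}(\beta(\l_j))$ for some $i_j\in\{N_1,\dots,N_2-1\}$. Pigeonhole then yields $\tilde f_t^{i_0}(\beta)=\beta$, contradicting strict preperiodicity; (A) plays no role.
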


First, we show (see Lemma~\ref{lem:active} from Section~\ref{subsec:active}) that we may assume in Proposition~\ref{prop:direct} that both $\alpha_1$ and $\alpha_2$ are active points for the family $f_t(z)$.


\subsection{A special case of Proposition~\ref{prop:direct}}
\label{subsec:active}

We show next that Proposition~\ref{prop:direct} holds assuming that at least one of the two starting points $\alpha_1$ or $\alpha_2$ is not active.

\begin{lemma}
\label{lem:active}
With the notation and hypotheses from Proposition~\ref{prop:direct}, assume furthermore that $\alpha_1$ is not an active point for $f_t$. Then Proposition~\ref{prop:direct} holds. 
\end{lemma}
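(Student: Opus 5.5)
The plan is to first show that $f_t$ is a constant family, and then reduce to Proposition~\ref{prop:constant_family}.

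\textbf{Step 1: the family is constant.} By hypothesis~(IV), $\alpha_1$ is not persistent preperiodic, and by assumption $\hhat_t(\alpha_1(t))=0$. Proposition~\ref{prop:nonzero_height} then shows that $f_t$ is isotrivial. Since $f_t$ is in normal form, Lemma~\ref{lem:iso} gives $f_t\in\Qbar[z]$, so every $c_i(t)$ is constant; write $f:=f_t=f_\l$.

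\textbf{Step 2: $\alpha_1$ is constant and not preperiodic.} For a constant family, Lemma~\ref{lem:iterates} gives $\deg_t f^n(\alpha_1(t))=d^n\deg_t(\alpha_1)$ whenever $\deg_t(\alpha_1)\ge 1$. In that case $\hhat_t(\alpha_1(t))=\deg_t(\alpha_1)>0$, a contradiction. Hence $\alpha_1\in\Qbar$, and by~(IV) it is not preperiodic under $f$.

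\textbf{Step 3: case analysis on $\alpha_2$ and $\beta$.} By hypothesis~(II), at least one of $\alpha_2,\beta$ is nonconstant.

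\begin{itemize}
\item If $\beta$ is constant, then every $\l\in C_f(\alpha_1,\alpha_2;\beta)$ satisfies $f^m(\alpha_1)=\beta$ for some $m$. Such an $m$ is unique, because $\alpha_1$ is not preperiodic. This already gives condition~(A) with $i=1$, since $f_t^m(\alpha_1)=\beta(t)$ as polynomials.
\item If $\beta$ is nonconstant and $\alpha_2$ is constant, each parameter must satisfy $\beta(\l)=f^m(\alpha_1)=f^n(\alpha_2)$ for some $m,n$.
 \begin{itemize}
 \item If $f^m(\alpha_1)=f^n(\alpha_2)$ holds for some $m,n\ge 1$, this is condition~(B). (If necessary, first shift indices so that both are positive.)
 \item Otherwise the orbits of $\alpha_1$ and $\alpha_2$ never meet, and no $\l$ exists at all, contradicting infiniteness.
 \end{itemize}
 The sub-case where the orbits meet only through index $0$ needs a little care. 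If $\alpha_1=f^n(\alpha_2)$, then applying $f$ gives $f^{1}(\alpha_1)=f^{n+1}(\alpha_2)$, which is still~(B).
\item If both $\alpha_2$ and $\beta$ are nonconstant, then Proposition~\ref{prop:constant_family} applies directly. Its hypotheses (i)--(iii) are exactly what we have established, and it yields $f^n(\alpha_2(t))=\beta(t)$, which is condition~(A) with $i=2$.
\end{itemize}

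\textbf{Main obstacle.} I expect the difficulty to lie in the bookkeeping of Step~3 rather than in any deep input, since the heavy lifting is done by Proposition~\ref{prop:constant_family} and Proposition~\ref{prop:nonzero_height}. The points to watch are that the indices in~(A) and~(B) must be positive integers, and that every degenerate configuration (constant $\beta$, or constant $\alpha_2$) must be shown either to land in one of (A)--(B) or to force $C_f(\alpha_1,\alpha_2;\beta)$ to be finite.
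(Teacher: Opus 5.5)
Your proof is correct and follows the paper's argument. Proposition~\ref{prop:nonzero_height} together with Lemma~\ref{lem:iso} forces a constant family; the degenerate cases of constant $\alpha_2$ or constant $\beta$ give (B) or (A) directly; otherwise Proposition~\ref{prop:constant_family} gives (A) with $i=2$.

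The differences are cosmetic. You explicitly justify that $\alpha_1$ is constant via Lemma~\ref{lem:iterates}, a step the paper only asserts. You also handle the constant-$\beta$ case before the constant-$\alpha_2$ case. Finally, your index-$0$ bookkeeping is harmless but unnecessary, since $m,n\in\N$ are positive integers in this paper.
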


\begin{proof}
First, we note that we know (by hypothesis~(IV) from Proposition~\ref{prop:direct}) that $\alpha_1$ is not persistent preperiodic under the action of $f_t$; also, the hypothesis of Lemma~\ref{lem:active} says that $\alpha_1$ is not active for $f_t$.    Combining this information with Proposition~\ref{prop:nonzero_height} and Lemma~\ref{lem:iso}, we obtain the following two facts:
\begin{itemize}
\item[(a)] $f_t(z)$ is a constant family of polynomials, i.e., $c_0(t),\dots, c_{d-2}(t)\in\Qbar[t]$ are constant polynomials;  and
\item[(b)] $\alpha_1(t)\in\Qbar[t]$ is also a constant polynomial.
\end{itemize}

Now, if also $\alpha_2$ is not an active point for $f_t$, it means that also $\alpha_2(t)$ is a constant polynomial. But then the existence of a \emph{single} $\l\in\Qbar$ such that $f_\l^m(\alpha_1(\l))=f_\l^n(\alpha_2(\l))$ (for some suitable $m,n\in\N$) yields that 
\begin{equation}
\label{eq:180}
f_t^m(\alpha_1(t))=f_t^n(\alpha_2(t)),
\end{equation} 
because we would have that both $f_t$ is a constant family of polynomials and also $\alpha_1,\alpha_2$ are constant starting points. But then equation~\eqref{eq:180} yields the desired conclusion~(B) from Proposition~\ref{prop:direct}. Therefore, we may assume from now on, that $\alpha_2$ is an active point for the constant family $f_t(z)$, i.e., that
\begin{itemize}
\item[(c)] $\deg_t\left(\alpha_2(t)\right)\ge 1$.
\end{itemize}

Similarly, if $\beta(t)$ is a constant polynomial, then assuming there exists some $\l\in\Qbar$ along with some $m\in\N$ such that $f_\l^m(\alpha_1(\l))=\beta(\l)$ yields that
\begin{equation}
\label{eq:181}
f_t^m(\alpha_1(t))=\beta(t),
\end{equation} 
because both $f_t(z)$ is a constant family of polynomials and also, $\alpha_1(t),\beta(t)$ are constant polynomials. Equation \eqref{eq:181} yields that conclusion~(A) from Proposition~\ref{prop:direct} holds, as desired. Therefore, from now on, we may assume 
\begin{itemize}
\item[(d)] $\deg_t(\beta(t))\ge 1$.
\end{itemize}

We identify each polynomial $c_i(t)$ with the constant from $\Qbar$ it represents (which we also denote by $c_i$); also, we identify our constant family $f_t(z)$ of polynomials with 
$$f(z):=z^d+\sum_{i=0}^{d-2}c_i z^i.$$
Then for each $\l\in\Qbar$, we have $f_\l(z)=f(z)$. Furthermore, identifying $\alpha_1(t)$ with the constant in $\Qbar$ it represents (which we also denote by $\alpha_1$), then we also note that hypothesis~(IV) from Proposition~\ref{prop:direct} yields
\begin{itemize}
\item[(e)] $\alpha_1$ is not preperiodic for $f(z)$.
\end{itemize}
Hypotheses of Proposition~\ref{prop:direct}, along with the additional conditions~(a)-(e) from Lemma~\ref{lem:active} show that we can apply Proposition~\ref{prop:constant_family} to $(f(z),\alpha_1,\alpha_2(t),\beta(t))$ and therefore, conclude that there exists $n\in\N$ such that
\begin{equation}
\label{eq:182}
f_t^n(\alpha_2(t))=\beta(t).
\end{equation}
Equation \eqref{eq:182} shows that conclusion~(A) from Proposition~\ref{prop:direct} holds.

This concludes our proof of Lemma~\ref{lem:active}.
\end{proof}

Lemma~\ref{lem:active} allows us to assume from now on that
\begin{equation}
\label{eq:184}
\text{both $\alpha_1$ and $\alpha_2$ are active points for the family of polynomials $f_t(z)$.}
\end{equation}


\subsection{Proof of Proposition~\ref{prop:direct}}

We work with the notation and hypotheses from Proposition~\ref{prop:direct}. Furthermore (see \eqref{eq:184} and Lemma~\ref{lem:active}), we may also assume
\begin{equation}
\label{eq:185}
\text{both $\alpha_1$ and $\alpha_2$ are active points for the family $f_t(z)$ of polynomials.}
\end{equation}
We know  there exist infinitely many $\l\in\Qbar$ such that for some positive integers $m$ and $n$, we have
\begin{equation}
\label{eq:10}
f_\l^m(\alpha_1(\l))=f_\l^n(\alpha_2(\l))=\beta(\l).
\end{equation}
If there exists some  $m\in\N$ such that for infinitely many $\l\in\Qbar$, equation~\eqref{eq:10} holds, then we get that 
\begin{equation}
\label{eq:12}
f_t^m(\alpha_1(t))=\beta(t)
\end{equation}
and therefore, conclusion~(A) from Theorem~\ref{thm:main} must hold. A similar conclusion holds assuming there exists some $n\in\N$ such that for infinitely many $\l\in\Qbar$, equation~\eqref{eq:10} holds, then we get that
\begin{equation}
\label{eq:13}
f_t^n(\alpha_2(t))=\beta(t).
\end{equation}
Thus, from now on, we may assume there exists an infinite sequence of distinct parameters $\{\l_j\}_{j\in\N}$ in $\Qbar$ such that for some positive integers $m_j$ and $n_j$ (depending on $\l_j$) satisfying 
\begin{equation}
\label{eq:14}
\lim_{j\to\infty}m_j=\lim_{j\to\infty}n_j=\infty,
\end{equation}
we have that for each $j\ge 1$, the following holds: 
\begin{equation}
\label{eq:15}
f_{\l_j}^{m_j}(\alpha_1(\l_j))=f_{\l_j}^{n_j}(\alpha_2(\l_j))=\beta(\l_j).
\end{equation}

Equations~\eqref{eq:14}~and~\eqref{eq:15} along with \eqref{eq:185}  allow us to apply Proposition~\ref{prop:heights} to deduce that 
\begin{equation}
\label{eq:240}
\lim_{j\to\infty}\hhat_{\l_j}\left(\alpha_1(\l_j)\right)= \lim_{j\to\infty} \hhat_{\l_j}\left(\alpha_2(\l_j)\right) = 0.
\end{equation}
Then equation \eqref{eq:240} (coupled with condition~\eqref{eq:185}) allows us to apply Theorem~\ref{thm:iff}, which yields that for \emph{each} $\l\in\Qbar$, we have that $\alpha_1(\l)$ is preperiodic for $f_\l(z)$ \emph{if and only if} $\alpha_2(\l)$ is preperiodic for $f_\l(z)$. Proposition~\ref{prop:real_converse} applied to $f_t(z)$ with starting point $\alpha_1(t)$ and target point also $\alpha_1(t)$  yields the existence of infinitely many $\l\in\Qbar$ such that $\alpha_1(\l)$ is periodic under the action of $f_\l(z)$, i.e., for some suitable $m\in\N$ (depending on $\l$), we have that $f_\l^m(\alpha_1(\l))=\alpha_1(\l)$. Hence, there exist infinitely many $\l\in\Qbar$ such that both $\alpha_1(\l)$ and $\alpha_2(\l)$ are preperiodic under the action of $f_\l(z)$; alternatively, one can apply \cite[Theorem~1.4]{Laura} to obtain that the set 
\begin{equation}
\label{eq:242}
\left\{\l\colon \text{$\alpha_1(\l)$ and $\alpha_2(\l)$ are preperiodic for $f_\l$}\right\}\text{ is infinite.}
\end{equation}
Property \eqref{eq:242} allows us to apply \cite[Theorem~1.3]{Matt-2}, which yields the existence of a family of polynomials $h_t(z)\in\Qbar[t][z]$ along with some $\ell_0\in\N$ such that 
\begin{equation}
\label{eq:C1-3}
h_{t}\circ f_t^{\ell_0}=f_t^{\ell_0}\circ h_t 
\end{equation}
and furthermore, for some $r,s\in\N$, we have
\begin{equation}
\label{eq:C2-3}
f_t^r(\alpha_1(t))=h_t\left(f_t^s(\alpha_2(t))\right).
\end{equation}
We note that a priori, \cite[Theorem~1.3]{Matt-2} yields that $h_t(z)\in\C[t][z]$, but then using equation \eqref{eq:C1-3} and  Proposition~\ref{prop:poly}, we get that $h_t(z)\in\Qbar[t][z]$. Furthermore, assuming conclusion~(B) from Proposition~\ref{prop:direct} is not met, then we must have some $\tilde{f}_t(z)\in\Qbar[t][z]$ along with some $k\ge 2$ such that 
\begin{equation}
\label{eq:92}
f_t=\tilde{f}_t^k; 
\end{equation}
also, we must have that $h_t=\tilde{f}_t^\ell$ for some $\ell\in\N$. More precisely, equation~\eqref{eq:C2-3} reads now:
\begin{equation}
\label{eq:91}
f_t^r\left(\alpha_1(t)\right) = \tilde{f}_t^\ell\left(f_t^s(\alpha_2(t))\right).
\end{equation}
Clearly, if $k\mid \ell$, then equation~\eqref{eq:92} yields that $h_t$ is an iterate of $f_t$ and thus, condition~(B) would be met. So, we may assume from now on that 
\begin{equation}
\label{eq:assumption_1}
\text{$k$ does not divide $\ell$.} 
\end{equation}
Under assumption \eqref{eq:assumption_1}, we will show that either conclusion~(C) or conclusion~(D) from Proposition~\ref{prop:direct} must hold.

First, we note that there exists an infinite sequence $\{\l_j\}_{j\in\N}$ in $\Qbar$ such that for some suitable positive integers $m_j$ and $n_j$, we have that
\begin{equation}
\label{eq:90}
f_{\l_j}^{m_j}\left(\alpha_1(\l_j)\right)= f_{\l_j}^{n_j}\left(\alpha_2(\l_j)\right) = \beta(\l_j).
\end{equation}
Using equation~\eqref{eq:92}, we restate equation~\eqref{eq:90} as follows:
\begin{equation}
\label{eq:93}
\tilde{f}_{\l_j}^{km_j}\left(\alpha_1(\l_j)\right) = \tilde{f}_{\l_j}^{kn_j}\left(\alpha_2(\l_j)\right) = \beta(\l_j)\text{ for each $j\in\N$.}
\end{equation}
Furthermore, equations~\eqref{eq:91}~and~\eqref{eq:92} yield that we also have:
\begin{equation}
\label{eq:94}
\tilde{f}_{\l_j}^{rk}\left(\alpha_1(\l_j)\right) = \tilde{f}_{\l_j}^{sk+\ell}\left(\alpha_2(\l_j)\right)\text{ for each $j\in\N$.}
\end{equation}
Since $m_j,n_j\to\infty$ as $j\to\infty$ (see \eqref{eq:14}), at the expense of replacing $\{\l_j\}_{j\in\N}$ with an infinite subsequence (simply disregarding finitely many elements from our original sequence $\{\l_j\}$), we may assume that for \emph{each} $j\in\N$, we have
\begin{equation}
\label{eq:95}
m_j,n_j>\max\{r,s,\ell\}.
\end{equation}
Inequality \eqref{eq:95} allows us to apply $\tilde{f}_{\l_j}^{k(m_j-r)}$ to the equation~\eqref{eq:94} and thus get:
\begin{equation}
\label{eq:96}
\tilde{f}_{\l_j}^{km_j}(\alpha_1(\l_j))= \tilde{f}_{\l_j}^{k(s+m_j-r)+\ell}(\alpha_2(\l_j))=\beta(\l_j).
\end{equation}
Combining equations \eqref{eq:96} and \eqref{eq:93} yields that
\begin{equation}
\label{eq:97}
\tilde{f}_{\l_j}^{k(s+m_j-r)+\ell}(\alpha_2(\l_j))= \tilde{f}_{\l_j}^{kn_j}(\alpha_2(\l_j))=\beta(\l_j)\text{ for each $j\in\N$.}
\end{equation}
Our assumption that $k$ does not divide $\ell$ (see \eqref{eq:assumption_1}) yields that 
\begin{equation}
\label{eq:100}
k(s+m_j-r)+\ell\ne kn_j.
\end{equation}
Equations \eqref{eq:100} and \eqref{eq:97} yield that $\beta(\l_j)$ must be a periodic point for $\tilde{f}_{\l_j}(z)$ and therefore, it must also be a periodic point for $f_{\l_j}(z)$ (for each $j\in\N$).  Furthermore, 
$\alpha_2(\l_j)$ must be preperiodic under the action of $\tilde{f}_{\l_j}(z)$ for each $j\in\N$; using \eqref{eq:92}, we get that for each $j\in\N$, $\alpha_2(\l_j)$ is preperiodic under the action of $f_{\l_j}(z)$. 

Next, we split our analysis depending on whether $\beta$ is persistent preperiodic for $f_t$, or not.

\begin{lemma}
\label{lem:C}
With the above notation and hypotheses (including assumption \eqref{eq:assumption_1}) for $f_t,\tilde{f}_t,\alpha_1,\alpha_2,\beta,k,\ell, r, s$, if $\beta$ is not persistent preperiodic under the action of $f_t$, then conclusion~(C) from Proposition~\ref{prop:direct} must hold.
\end{lemma}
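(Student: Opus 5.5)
The plan is to run, for the pair $(\alpha_2,\beta)$, the same argument that was just used for $(\alpha_1,\alpha_2)$, and then to splice the resulting relation onto \eqref{eq:91}. The input is the conclusion reached just before the statement: for every $j\in\N$, $\beta(\l_j)$ is periodic and $\alpha_2(\l_j)$ is preperiodic under $f_{\l_j}$, and the $\l_j$ are pairwise distinct. Hence the set of $\l\in\Qbar$ for which both $\alpha_2(\l)$ and $\beta(\l)$ are preperiodic for $f_\l$ is infinite.

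\emph{Step 1: $\beta$ is active for $f_t$.} First suppose $f_t$ is not a constant family. Then it is not isotrivial by Lemma~\ref{lem:iso}. Since $\beta$ is assumed not persistent preperiodic, Proposition~\ref{prop:nonzero_height} gives $\hhat_t(\beta(t))>0$.

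Now suppose $f_t=f\in\Qbar[z]$ is constant. If $\beta$ were a constant polynomial, then $\beta(\l_j)=\beta$ would be a fixed non-preperiodic point of $f$. This contradicts the periodicity of $\beta(\l_j)$. So $\deg_t\beta\ge1$, and Lemma~\ref{lem:iterates} shows $\hhat_t(\beta(t))=\deg_t\beta>0$.

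By \eqref{eq:185}, $\alpha_2$ is also active. So both $\alpha_2$ and $\beta$ are active points that are simultaneously preperiodic for infinitely many parameters.

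\emph{Step 2: a global relation between $\alpha_2$ and $\beta$.} Apply \cite[Theorem~1.3]{Matt-2} to the pair $(\alpha_2,\beta)$, exactly as it was applied to $(\alpha_1,\alpha_2)$ in deriving \eqref{eq:C1-3}--\eqref{eq:C2-3}. This gives a polynomial $h'_t$ commuting with some iterate of $f_t$, together with $r',s'\in\N$, such that
\[
f_t^{r'}(\beta(t))=h'_t\bigl(f_t^{s'}(\alpha_2(t))\bigr).
\]
By Proposition~\ref{prop:poly}, every polynomial commuting with an iterate of $f_t$ is an iterate of the same $\tilde f_t$ as in \eqref{eq:92}, so $h'_t=\tilde f_t^{\ell'}$ for some $\ell'\ge0$. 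Rewriting with $f_t=\tilde f_t^k$, we get
\[
\tilde f_t^{a}(\beta(t))=\tilde f_t^{b}(\alpha_2(t)),\qquad\text{where } a=kr'\ge1,\; b=ks'+\ell'\ge1 .
\]

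\emph{Step 3: combining the relations.} Apply $\tilde f_t^{\,b}$ to \eqref{eq:91}, written as $\tilde f_t^{rk}(\alpha_1)=\tilde f_t^{sk+\ell}(\alpha_2)$. Apply $\tilde f_t^{\,sk+\ell}$ to the relation from Step 2. Together these give
\[
\tilde f_t^{rk+b}(\alpha_1(t))=\tilde f_t^{sk+\ell+b}(\alpha_2(t))=\tilde f_t^{sk+\ell+a}(\beta(t)).
\]
Set $\ell_1=rk+b$, $\ell_2=sk+\ell+b$ and $\ell_3=sk+\ell+a$, all of which are positive. Together with $k\ge2$ and $\tilde f_t\in\Qbar[t][z]$, this is exactly conclusion~(C).

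\emph{Main obstacle.} I expect the main difficulty to be Step 2: checking that the hypotheses of \cite[Theorem~1.3]{Matt-2} really hold for $(\alpha_2,\beta)$, namely that both points are active (or not persistent preperiodic) and that hypotheses (I) and (III) on $f_t$ are in force. It also matters that Proposition~\ref{prop:poly} produces the \emph{same} $\tilde f_t$ for both relations. This is the case because the set of polynomials commuting with iterates of $f_t$ is the single cyclic semigroup generated by $\tilde f_t$, so that generator is uniquely determined.
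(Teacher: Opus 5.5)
Your proposal is correct and follows the paper's proof. You apply \cite[Theorem~1.3]{Matt-2} to the pair $(\alpha_2,\beta)$, use Proposition~\ref{prop:poly} to write the resulting commuting polynomial as an iterate of the same $\tilde{f}_t$, and combine that relation with \eqref{eq:91} to get conclusion~(C); your Step~1 (checking that $\beta$ is active, including the constant-family case) and your explicit exponents $\ell_1,\ell_2,\ell_3$ fill in details the paper leaves implicit.
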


\begin{proof}[Proof of Lemma~\ref{lem:C}.]
So, we know that for each $j\in\N$, we have that both $\alpha_2(\l_j)$ and $\beta(\l_j)$ are preperiodic under the action of $f_{\l_j}(z)$.  Then  applying once again \cite[Theorem~1.3]{Matt-2}, this time to $\alpha_2$ and $\beta$ (note our hypothesis from Lemma~\ref{lem:C} that $\beta$ is not persistent preperiodic), we obtain that there exists $g_t(z)\in\Qbar[t][z]$ commuting with an iterate of $f_t(z)$ such that for some $r_2,s_2\in\N$, we have
\begin{equation}
\label{eq:98}
f_t^{r_2}(\alpha_2(t))=g_t\left(f_t^{s_2}(\beta(t))\right).
\end{equation}
Another application of Proposition~\ref{prop:poly} yields that $g_t=\tilde{f}_t^{\ell_2}$ for some $\ell_2\in\N$. Combining equations \eqref{eq:98} and \eqref{eq:91}, along with equation~\eqref{eq:92} delivers the desired conclusion~(C) from Proposition~\ref{prop:direct}. 
\end{proof}

So, we are left with one more case: still under the hypothesis~\eqref{eq:assumption_1}, we assume from now on that $\beta$ is persistent preperiodic. Next, we show that in this case,  conclusion~(D) of Proposition~\ref{prop:direct} must hold.

\begin{lemma}
\label{lem:D}
With the above notation and hypotheses (including assumption~\eqref{eq:assumption_1}) for $f_t,\tilde{f}_t,\alpha_1,\alpha_2,\beta,k,\ell, r, s$, if $\beta$ is persistent preperiodic under the action of $f_t$, then conclusion~(D) from Proposition~\ref{prop:direct} must hold.
\end{lemma}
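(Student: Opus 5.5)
We need conclusion (D): $f_t=\tilde{f}_t^k$, $\tilde{f}_t^{\ell_1}(\alpha_1)=\tilde{f}_t^{\ell_2}(\alpha_2)$, and $\beta$ persistent periodic of period $N_0$ under $\tilde{f}_t$ with $\gcd(N_0,k)\mid \ell_2-\ell_1$.

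\emph{Relation between $\alpha_1$ and $\alpha_2$.} From \eqref{eq:91} and \eqref{eq:92} we have $\tilde{f}_t^{rk}(\alpha_1(t))=\tilde{f}_t^{sk+\ell}(\alpha_2(t))$. So I take $\ell_1:=rk$ and $\ell_2:=sk+\ell$; both are positive.

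\emph{$\beta$ is persistent periodic under $\tilde{f}_t$.} By hypothesis $\beta$ is persistent preperiodic under $f_t$, hence also under $\tilde{f}_t$. Write $\tilde{f}_t^{a+N}(\beta)=\tilde{f}_t^a(\beta)$, with $N$ the minimal period of the cycle in $\Qbar[t]$. This identity specializes at every $\l_j$. By \eqref{eq:97}, $\beta(\l_j)$ is periodic for $\tilde{f}_{\l_j}$. Fix a large $M$ with $N\mid M$ and $M\ge a$. Then $\tilde{f}_t^M(\beta(t))$ lies on the persistent cycle, and
\[
\tilde{f}_{\l_j}^{M}(\beta(\l_j))\in\left\{\tilde{f}_{\l_j}^i(\beta(\l_j))\right\},
\]
since $\beta(\l_j)$ is periodic. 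I would rather use the following cleaner route. Periodicity of $\beta(\l_j)$ means $\tilde{f}_{\l_j}^{p_j}(\beta(\l_j))=\beta(\l_j)$ for some $p_j$. Because $\beta(\l_j)$ is preperiodic with tail at most $a$ and cycle length dividing $N$ under the specialized map, we get $\tilde{f}_{\l_j}^{a'}(\beta(\l_j))=\beta(\l_j)$ for $a'$ a fixed multiple of $N$ with $a'\ge a$. Precisely, $\tilde{f}^{a'}_{\l_j}(\beta(\l_j))=\tilde{f}^{a'+cp_j}_{\l_j}(\beta(\l_j))=\beta(\l_j)$, choosing $c$ with $N\mid cp_j$ is unnecessary, since $\tilde{f}^{a'}(\beta)$ already lies on the cycle generically. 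Hence the polynomial identity $\tilde{f}_t^{a'}(\beta(t))=\beta(t)$ holds at infinitely many $\l_j$, and therefore holds identically. So $\beta$ is persistent periodic under $\tilde{f}_t$. Let $N_0$ be its minimal period.

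\emph{The gcd condition (main obstacle).} Write $m_j$ for the exponent in $\tilde{f}_{\l_j}^{km_j}(\alpha_1(\l_j))=\beta(\l_j)$ from \eqref{eq:93}. Let $r_j$ be the minimal period of $\beta(\l_j)$ under $\tilde{f}_{\l_j}$; we have $r_j\mid N_0$. Applying $\tilde{f}^{k(m_j-r)}$ to the $\alpha$-relation gives $\beta(\l_j)=\tilde{f}^{km_j-\ell_1+\ell_2}_{\l_j}(\alpha_2(\l_j))$. We also have $\beta(\l_j)=\tilde{f}^{kn_j}_{\l_j}(\alpha_2(\l_j))$.

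The key claim is that once the orbit of $\alpha_2(\l_j)$ hits the periodic point $\beta(\l_j)$, any two hitting times are congruent modulo $r_j$. This holds because $\alpha_2(\l_j)$ enters the cycle at a well-defined first time, after which the orbit is exactly periodic. Thus $r_j\mid k(m_j-n_j)-(\ell_1-\ell_2)$, so $\gcd(r_j,k)\mid \ell_2-\ell_1$.

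The difficulty is to replace $r_j$ by $N_0$. For this I show $r_j=N_0$ for all but finitely many $j$. For each proper divisor $e$ of $N_0$, the identity $\tilde{f}_{\l}^{e}(\beta(\l))=\beta(\l)$ is a nonzero polynomial equation in $\l$, because $N_0$ is the minimal generic period. Such an equation has finitely many roots. Since the $\l_j$ are distinct, for large $j$ we get $r_j=N_0$, and hence $\gcd(N_0,k)\mid\ell_2-\ell_1$. This gives conclusion (D).
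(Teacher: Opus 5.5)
Your proposal is correct and follows the paper's proof. The matching steps are the choice $\ell_1=rk$, $\ell_2=sk+\ell$; passing from periodicity of $\beta(\l_j)$ at infinitely many distinct parameters to persistent periodicity; showing that all but finitely many $\l$ give exact period $N_0$; and the divisibility $N_0\mid k(m_j-n_j)+\ell_2-\ell_1$. The one variation is in the persistent-periodicity step, where the paper uses a pigeonhole over return times $i_j\in\{N_1,\dots,N_2-1\}$ and you instead use that the exact period of $\beta(\l_j)$ divides the generic cycle length $N$, so $\tilde{f}_{\l_j}^{N}(\beta(\l_j))=\beta(\l_j)$ for every $j$. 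That works, but state this divisibility as the reason, and delete the abandoned ``fix a large $M$'' line and the confused sentence about choosing $c$.
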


\begin{proof}[Proof of Lemma~\ref{lem:D}.]
As noted before, equations~\eqref{eq:97}~and~\eqref{eq:100} yield that $\beta(\l_j)$ is a periodic point for $f_{\l_j}(z)$. Since we assume in Lemma~\ref{lem:D} that $\beta$ is persistent preperiodic for $f_t$ (and thus for $\tilde{f}_t$), we claim that it must actually be persistent periodic for $f_t$ (and also for $\tilde{f}_t$), i.e., there exists $N_0\in\N$ such that
\begin{equation}
\label{eq:102}
\tilde{f}_t^{N_0}(\beta(t))=\beta(t).
\end{equation}
Indeed, if $\beta$ is persistent strictly preperiodic then there exist integers $1\le N_1<N_2$ such that 
\begin{equation}
\label{eq:103}
\text{for $0\le i\le N_2-1$, the points $\tilde{f}_t^i(\beta(t))$ are distinct, and $\tilde{f}_t^{N_1}(\beta(t))=\tilde{f}_t^{N_2}(\beta(t))$.}
\end{equation}
So, because  $\beta(\l_j)$ is periodic, then there must be some $i_j\in\{N_1,\dots, N_2-1\}$ such that
\begin{equation}
\label{eq:104}
\beta(\l_j)=\tilde{f}_{\l_j}^{i_j}(\beta(\l_j)).
\end{equation}
Using the pigeonhole principle, there exists some $i_0\in\{N_1,\dots,N_2-1\}$ such that
\begin{equation}
\label{eq:105}
\beta(\l_j)=\tilde{f}_{\l_j}^{i_0}(\beta(\l_j))\text{ for infinitely many $j\in\N$.}
\end{equation}
But then equation~\eqref{eq:105} yields that $\beta(t)=\tilde{f}_t^{i_0}(\beta(t))$, which contradicts \eqref{eq:103}. Therefore, indeed, we must have that $\beta$ is persistent periodic, i.e., equation~\eqref{eq:102} holds for some suitable positive integer $N_0$. Without loss of generality, we may assume $N_0$ is the minimal period of $\beta(t)$ under the action of $\tilde{f}_t(z)$, i.e.,
\begin{equation}
\label{eq:108}
\text{for $j=0,\dots, N_0-1$, the points $\tilde{f}_t^j\left(\beta(t)\right)$ are distinct, while $\beta(t)=\tilde{f}_t^{N_0}(\beta(t))$.}
\end{equation}
We claim next that for all but finitely many $\l\in\Qbar$, we must have that also $\beta(\l)$ is periodic of minimal period $N_0$. Indeed, if $\beta(\l)$ (for some $\l\in\Qbar$) has minimal period less than $N_0$, then  we must have some integer $i_\l\in\{1,\dots, N_0-1\}$, such that
\begin{equation}
\label{eq:106}
\beta(\l)=\tilde{f}_\l^{i_\l}(\beta(\l)).
\end{equation}
So, if equation~\eqref{eq:106} were to hold for infinitely many $\l\in\Qbar$, then we could find some $j_0\in\{1,\dots, N_0-1\}$ such that
\begin{equation}
\label{eq:107}
\beta(\l)=\tilde{f}_\l^{j_0}(\beta(\l))\text{ for infinitely many $\l\in\Qbar$.}
\end{equation}
But then equation~\eqref{eq:107} would force that $\beta(t)=\tilde{f}_t^{j_0}(\beta(t))$, which contradicts \eqref{eq:108}. Therefore, indeed, we must have that for all but finitely many $\l\in\Qbar$, the point $\beta(\l)$ has minimal period $N_0$ under the action of $\tilde{f}_\l(z)$. So, at the expense of replacing the sequence $\{\l_j\}_{j\in\N}$ by an infinite subsequence (simply, by disregarding finitely many elements of the original sequence), we may assume that 
\begin{equation}
\label{eq:109}
\text{for \emph{each} $j\in\N$, the point $\beta(\l_j)$ has minimal period $N_0$ under the action of $\tilde{f}_{\l_j}(z)$.}
\end{equation}
Equations \eqref{eq:109} and \eqref{eq:97} yield that 
\begin{equation}
\label{eq:110}
N_0\mid k\cdot \left(m_j-n_j+s-r\right)+\ell.
\end{equation} 
The divisibility from \eqref{eq:110} yields that $\gcd(N_0,k)\mid \ell$, as desired in conclusion~(D) from Proposition~\ref{prop:direct}.
\end{proof}

Lemma~\ref{lem:D} finishes our proof of Proposition~\ref{prop:direct}. Furthermore, as noted before, Proposition~\ref{prop:direct} coupled with Proposition~\ref{prop:converse} finish the proof for Theorem~\ref{thm:main}.



\begin{thebibliography}{BDNSW25}

\bibitem[AR04]{A-R}
N. Ailon and Z. Rudnick, \emph{Torsion points on curves and common divisors of $a^k-1$ and $b^k-1$}, Acta Arith. \textbf{113} (2004), no.~1, 31--38.

\bibitem[AG26]{A-G}
S. Asgarli and D. Ghioca, \emph{Collision of orbits for families of polynomials defined over fields of positive characteristic}, Math. Z. \textbf{313} (2026), no.~2, Paper No.~29, 39 pp.

\bibitem[Bak09]{Matt-0}
M. Baker, \emph{A finiteness theorem for canonical heights attached to rational maps over function fields}, J. Reine Angew. Math. \textbf{626} (2009), 205--233.

\bibitem[BD11]{Matt}
M.~H.~Baker and L.~DeMarco, \emph{Preperiodic points and unlikely intersections}, Duke Math. J. \textbf{159} (2011), no.~1, 1--29.

\bibitem[BD13]{Matt-2} 
M. H. Baker and L. DeMarco, \emph{Special curves and postcritically finite polynomials}, Forum Math. Pi \textbf{1} (2013), e3, 35~pp.

\bibitem[BR06]{B-R}
M. H. Baker and R. Rumely, \emph{Equidistribution of small points, rational dynamics, and potential theory}, 
Ann. Inst. Fourier (Grenoble) \textbf{56} (2006), no.~3, 625--688.

\bibitem[BR10]{BR}
M. H. Baker and R. Rumely, \emph{Potential theory and dynamics on the Berkovich projective line}, 
Mathematical Surveys and Monographs, \textbf{159}. American Mathematical Society, Providence, RI, 2010. xxxiv+428 pp.

\bibitem[BC16]{Lau-1}
F. Barroero and L. Capuano, \emph{Linear relations in families of powers of elliptic curves}, Algebra Number Theory \textbf{10} (2016), no.~1, 195--214.

\bibitem[BCT24]{Lau-4}
F. Barroero, L. Capuano, and A. Turchet, \emph{Greatest common divisor results on semiabelian varieties and a conjecture of Silverman}, 
Res. Number Theory \textbf{10} (2024), no.~1, Paper No. 17, 16 pp.

\bibitem[BGT16]{BGT-book}
J. P. Bell, D. Ghioca, and T. J. Tucker, \emph{The dynamical Mordell-Lang conjecture}, Mathematical Surveys and Monographs, \textbf{210}. American Mathematical Society, Providence, RI, 2016. xiii+280 pp.

\bibitem[Ben05]{Rob}
R. L. Benedetto, \emph{Heights and preperiodic points of polynomials over function fields}, Int. Math. Res. Not. \textbf{2005}, no.~62, 3855--3866.

\bibitem[BIJMST19]{survey}
R. L. Benedetto, P. Ingram, R. Jones, M. Manes, J.~H.~Silverman, T.~J.~Tucker, \emph{Current trends and open problems in arithmetic dynamics}, Bull. Amer. Math. Soc. (N.S.) \textbf{56} (2019), no.~4, 611--685.

\bibitem[Bil97]{Bilu}
Y. Bilu, \emph{Limit distribution of small points on algebraic tori}, 
Duke Math. J. \textbf{89} (1997), no.~3, 465--476.

\bibitem[BCZ03]{BCZ}
Y. Bugeaud, P. Corvaja and U. Zannier, \emph{An upper bound for the \emph{G.C.D.} of $a^n-1$ and $b^n-1$}, Math. Z. \textbf{243} (2003), 79--84.

\bibitem[CS93]{Call-Silverman}
G. S. Call and J. H. Silverman, \emph{Canonical heights on varieties with morphisms}, Compositio Math. \textbf{89} (1993), no.~2, 163--205.

\bibitem[CL06]{CL}
A.~Chambert-Loir, \emph{Mesures et \'equidistribution sur les espaces de
 {B}erkovich}, J. Reine Angew. Math. \textbf{595} (2006), 215--235.

\bibitem[CZ13]{C-Z-2}
P. Corvaja and U. Zannier, \emph{Greatest common divisors of $u-1$, $v-1$ in positive characteristic and rational points on curves over finite fields}, J. Eur. Math. Soc. (JEMS) \textbf{15} (2013), no.~5, 1927--1942.

\bibitem[DeM16]{Laura}
L. DeMarco, \emph{Bifurcations, intersections, and heights}, 
Algebra Number Theory \textbf{10} (2016), no.~5, 1031--1056.

\bibitem[FG22]{FG}
C. Favre and T. Gauthier, \emph{The arithmetic of polynomial dynamical pairs}, Annals of Mathematics Studies, \textbf{214}, Princeton University Press, Princeton, NJ, 2022, xvii+232 pp. 

\bibitem[FR06]{FR}
C.~Favre and J.~Rivera-Letelier, \emph{\'Equidistribution quantitative des points de petite hauteur sur la droite projective}, Math. Ann. \textbf{355} (2006), no. 2, 311--361.

\bibitem[Ghi14]{G-DML}
D. Ghioca, \emph{The dynamical Mordell-Lang conjecture}, CMS Notes \textbf{46} (2014), no.~3, 14--15.

\bibitem[Ghi17]{G-Survey}
D. Ghioca, \emph{Unlikely intersections in arithmetic dynamics}, 
CMS Notes \textbf{49} (2017), no.~4, 12--13.

\bibitem[Ghi24]{G-JNT}
D.~Ghioca, \emph{Collision of orbits for a one-parameter family of Drinfeld modules}, J. Number Theory \textbf{257} (2024), 320--340.

\bibitem[Ghi26]{1}
D.~Ghioca, \emph{Simultaneously preperiodic points for a family of polynomials in positive characteristic}, Canad. J. Math. \textbf{78} (2026), no.~1, 199--221.

\bibitem[GH13]{AA}
D. Ghioca and L.-C. Hsia, \emph{Torsion points in families of Drinfeld modules}, Acta Arith. \textbf{161} (2013), no.~3, 219--240.

\bibitem[GHT13]{GHT-ANT} 
D. Ghioca, L.-C. Hsia, and T. J. Tucker, \emph{Preperiodic points for families of polynomials}, Algebra Number Theory \textbf{7} (2013), no.~3, 701--732.

\bibitem[GHT17]{GHT-NJM}
D. Ghioca, L.-C. Hsia, and T. J. Tucker, \emph{On a variant of the Ailon-Rudnick theorem in finite characteristic}, 
New York J. Math. \textbf{23} (2017), 213--225.

\bibitem[GHT18]{GHT-PJM}
D. Ghioca, L.-C. Hsia, and T. J. Tucker, \emph{A variant of a theorem by Ailon-Rudnick for elliptic curves}, 
Pacific J. Math. \textbf{295} (2018), no.~1, 1--15.

\bibitem[GM13]{G-M}
D. Ghioca and N. M. Mavraki, \emph{Variation of the canonical height in a family of rational maps},  
New York J. Math. \textbf{19} (2013), 873--907.

\bibitem[GN18]{ANT}
D. Ghioca and K. D. Nguyen, \emph{A dynamical variant of the Pink-Zilber conjecture}, Algebra Number Theory \textbf{12} (2018), no.~7, 1749--1771.


\bibitem[GS]{D-N}
D. Ghioca and N. Shadgar, \emph{Collision of orbits on an elliptic surface}, Bull. Aust. Math. Soc., 2026, 15 pp. (to appear).

\bibitem[GT21]{GT-BAMS}
D. Ghioca and T. J. Tucker, \emph{A reformulation of the dynamical Manin-Mumford conjecture}, Bull. Aust. Math. Soc. \textbf{103} (2021), no.~1, 154--161.

\bibitem[GTZ11]{GTZ}
D. Ghioca, T. J. Tucker, and S. Zhang, \emph{Towards a dynamical Manin-Mumford conjecture}, Int. Math. Res. Not. IMRN \textbf{2011}, no.~22, 5109--5122.

\bibitem[HT17]{HT}
L.-C. Hsia and T. J. Tucker, \emph{Greatest common divisors of iterates of polynomials}, 
Algebra Number Theory \textbf{11} (2017), no.~6, 1437--1459.


\bibitem[Ing13]{Patrick-Crelle}
P. Ingram, \emph{Variation of the canonical height for a family of polynomials}, J. Reine Angew. Math. \textbf{685} (2013), 73--97.

\bibitem[Lau84]{Laurent}
M. Laurent, \emph{\'{E}quations diophantiennes exponentielles}, 
Invent. Math. \textbf{78} (1984), no.~2, 299--327.

\bibitem[LN]{LN}
J.~Lee and G.~Nam, \emph{On simultaneously preperiodic points for one-parameter families of polynomials in characteristic $p$}, preprint available online at https://arxiv.org/pdf/2509.15079.

\bibitem[Luc05]{Luca}
F. Luca, \emph{On the greatest common divisor of $u-1$ and $v-1$ with $u$ and $v$ near $S$-units}, Monatsh. Math. \textbf{146} (2005), no.~3, 239--256. 

\bibitem[MZ10]{M-Z-1}
D. W.~Masser and U. Zannier, \emph{Torsion anomalous points and families of elliptic curves}, Amer. J. Math. \textbf{132} (2010), no.~6, 1677--1691.

\bibitem[MZ12]{M-Z-2}
D.~W.~Masser and U.~Zannier, \emph{Torsion points on families of squares of elliptic curves}, Math. Ann. \textbf{352} (2012), no.~2, 453--484.

\bibitem[MS14]{Alice}
A. Medvedev and T. Scanlon, \emph{Invariant varieties for polynomial dynamical systems}, Ann. of Math. (2) \textbf{179} (2014), no.~1, 81--177.

\bibitem[Sil83]{Sil83}
J. H. Silverman, \emph{Heights and the specialization map for families of abelian varieties}, J. Reine Angew. Math. \textbf{342} (1983), 197--211.

\bibitem[Sil91]{Sil-K3}
J. H. Silverman, \emph{Rational points on K3 surfaces: a new canonical height}, Invent. Math. \textbf{105} (1991), no.~2, 347--373.

\bibitem[Sil04]{Silverman-3}
J. H. Silverman, \emph{Common divisors of elliptic divisibility sequences over function fields}, Manuscripta Math. \textbf{114} (2004), no.~4, 431--446.

\bibitem[Sil07]{Silverman}
J. H. Silverman, \emph{The arithmetic of dynamical systems}, 
Graduate Texts in Mathematics, \textbf{241}. Springer, New York, 2007. x+511 pp.

\bibitem[Tat83]{Tate}
J. Tate, \emph{Variation of the canonical height of a point depending on a parameter}, Amer. J. Math. \textbf{105} (1983), no.~1, 287--294.

\bibitem[Yua08]{Yuan}
X.~Yuan, \emph{Big line bundles over arithmetic varieties}, Invent. Math. \textbf{173} (2008), no.~3, 603--649.

\bibitem[Zan12]{Umberto}
U.~Zannier, \emph{Some problems of unlikely intersections in arithmetic and geometry. With appendixes by David Masser}, Annals of Mathematics Studies, \textbf{181}. Princeton University Press, Princeton, NJ, 2012. xiv+160 pp.

\bibitem[Zha95]{Zhang}
S.-W. Zhang, \emph{Equidistribution of small points on abelian varieties}, Ann. of Math. (2) \textbf{147} (1998), no.~1, 159--165.


\end{thebibliography}
\end{document}